\newtheorem{theorem}{Theorem}
\newtheorem{remark}{Remark}
\newtheorem{lemma}{Lemma}
\newtheorem{corollary}{Corollary}
\newcommand{\p}{\partial}
\newcommand{\vu}{\bm{u}}
\title{Rigidity for homogeneous solutions to the two-dimensional Euler equations in sector-type domains}
\author{Li Li\footnote{School of Mathematics and Statistics, Ningbo University, 818 Fenghua Road, Ningbo, Zhejiang 315211, China. Email: lili2@nbu.edu.cn.}, Xukai Yan\footnote{Department of Mathematics, Oklahoma State University, 401 Mathematical Sciences Building, Stillwater, OK 74078, USA. Email: xuyan@okstate.edu.}, Zhibo Yang\footnote{Corresponding author. School of Mathematics and Statistics, Ningbo University, 818 Fenghua Road, Ningbo, Zhejiang 315211, China. Email: 2311400053@nbu.edu.cn.}}
\date{\today}
\begin{document}

\maketitle

\begin{abstract}

    We study the rigidity problem for $(-\alpha)$-homogeneous solutions to the two-dimensional incompressible stationary Euler equations in sector-type domains $\Omega_{a, b, \theta_0}:= \{(r,\theta): a<r<b, \ 0<\theta<\theta_0\}$, where $\alpha\in\mathbb{R}$, $0\leqslant a < b \leqslant +\infty$ and $0< \theta_0 \leqslant 2\pi$. For each type of domains, depending on whether $a = 0$ or $a > 0$, and $b = +\infty$ or $b < +\infty$, we show that if a solution satisfies some homogeneity assumptions on the boundary of $\Omega_{a, b, \theta_0}$ and if the radial or angular component of the velocity does not vanish in $\overline{\Omega_{a, b, \theta_0}}\setminus\{\bm{0}\}$, then  it must be homogeneous throughout $\overline{\Omega_{a, b, \theta_0}}\setminus\{\bm{0}\}$.

    \medskip

    \noindent
    {\textbf{Keyword:}} Euler equations; homogeneous solutions; rigidity

\end{abstract}

\section{Introduction}
The Euler equations describe the motion of an inviscid flow. We consider the rigidity problem for the two-dimensional incompressible stationary Euler equations
\begin{equation}\label{eq:euler}
    \left\{
    \begin{aligned}
        \vu \cdot \nabla \vu & = - \nabla P & \text{in } \Omega, \\
        \text{div } \vu & = 0 & \text{in } \Omega, 
    \end{aligned}
    \right.
\end{equation}
where $\vu$ is the velocity field, $P$ is the pressure and $\Omega\subseteq \mathbb{R}^2$ is a domain. 

The rigidity problem of (\ref{eq:euler}) concerns whether, for a given domain $\Omega$, the solution $\vu$ must take a specific form under certain conditions.
Hamel and Nadirashvili studied in \cite{HamelNadirashvili2017} the rigidity problem of (\ref{eq:euler}) for two-dimensional strips and half-plane. They proved that all steady flows in a strip with no stagnation point and satisfying tangential boundary conditions are shear flows. The same conclusion holds for bounded steady flows in a half-plane under the similar tangential boundary conditions. 
In a subsequent work, Hamel and Nadirashvili \cite{HamelNadirashvili2019} showed that all bounded steady flows with no stagnation point in the two-dimensional plane must be shear flows. 
In \cite{ConstantinDrivasGinsberg2021}, Constantin, Drivas and Ginsberg established rigidity results for two-dimensional Euler equations (\ref{eq:euler}) and Boussinesq equations in a periodic channel, as well as for three-dimensional axisymmetric Euler equations in a pipe, under certain conditions. Furthermore, in the sense of Arnol'd stable states, solutions to (\ref{eq:euler}) on some two-dimensional compact Riemannian manifolds with smooth boundary (e.g. spherical cap) also exhibit certain rigidity properties. 

Hamel and Nadirashvili proved in \cite{HamelNadirashvili2023} a series of rigidity results for (\ref{eq:euler}) in two-dimensional bounded annuli, exteriors of disks, punctured disks and punctured plane, as well as classification results for (\ref{eq:euler}) in simply or doubly connected bounded domains. Specifically, they proved that if a flow in an annulus is tangent to the boundary and has no stagnation point, then it must be circular. 
Their classification result showed that for two-dimensional simply connected bounded domains, if the flow is tangent to the boundary, has a unique stagnation point in the domain and constant norm on the boundary, then the domain must be a disk and the flow must be circular.
Constantin, Drivas and Ginsberg \cite{ConstantinDrivasGinsberg2022} generalized this classification result to the case of an ideal incompressible magnetohydrodynamic fluid with nested flux surfaces. In two-dimensional simply connected domains, they proved that if the magnetic field and velocity field are never commensurate, then the only possible domain for such equilibria is a disk, and both the velocity and magnetic field must be circular. 

In \cite{KochNadirashviliSereginSverak2009}, Koch, Nadirashvili, Seregin and \v{S}ver\'ak studied bounded ancient solutions to the Navier-Stokes equations in $\mathbb{R}^n\times (-\infty,0)$. They showed that for $n=2$, all bounded weak solutions depend only on time. For $n=3$, they further proved that any bounded axisymmetric weak solution without swirl must be of the form $\bm{u}(\bm{x},t) = (0,0,b(t))$ for some bounded measurable function $b$. 
Later, Lei, Ren and Zhang \cite{LeiRenZhang2022} proved that, in cylindrical coordinates, any bounded ancient mild axisymmetric solution $\bm{v}=v_r \bm{e_r}+v_\theta \bm{e_\theta}+v_z \bm{e_z}$ to the Navier-Stokes equations, with $r v_\theta$ bounded and the solution periodic in the $z$ direction, must be a constant flow of the form $\bm{v} \equiv c\bm{e_z}$ for some constant $c$. 
In \cite{BangGuiWangXie2025}, Bang, Gui, Wang and Xie established the rigidity results for stationary Navier-Stokes equations in a three-dimensional slab. They proved that, under various conditions, the solutions must be trivial, Poiseuille flows, or constant vectors. 
Further studies related to the rigidity problem for the Euler equations and Navier-Stokes equations can be found in the references cited therein.

Equation (\ref{eq:euler}) is invariant under the scaling $(\vu, P)\to (\lambda^\alpha \vu(\lambda \bm{x}),\lambda^{2 \alpha} P(\lambda \bm{x}))$ for any $\alpha\in \mathbb{R}$ and $\lambda>0$. Thus it is natural to study solutions that are invariant under this scaling, which we refer as $(-\alpha)$-homogeneous solutions of (\ref{eq:euler}). Similar homogeneity also holds for Navier-Stokes equations, where $\alpha$ must be $1$ and the corresponding solutions are $(-1)$-homogeneous.

The study of homogeneous solutions of Navier-Stokes equations  can be traced back to \cite{Landau, Slezkin}. Landau discovered a three-parameter family of exact $(-1)$-homogeneous solutions of the Navier-Stokes equations, known as Landau solutions, which are axisymmetric with no swirl. 
Tian and Xin \cite{TianXin} proved that all axisymmetric homogeneous solutions $\bm{u} \in C^2(\mathbb{R}^3 \setminus \{\bm{0} \})$ to the Navier-Stokes equations are Landau solutions. 
Later, \v{S}ver\'{a}k \cite{Sverak} removed the axisymmetry assumption and proved that all homogeneous solutions $\vu \in C^2( \mathbb{R}^3 \setminus \{\bm{0} \})$ are Landau solutions. In the same paper, \v{S}ver\'{a}k also proved that there are no homogeneous solution for the higher-dimensional problem ($n \geqslant 4$), and the homogeneous solution for the two-dimensional problem can be fully classified under a zero flux condition on a circle.

Motivated by modeling of tornadoes, Serrin \cite{Serrin} studied homogeneous solutions of the Navier-Stokes equations in a half-space excluding the positive $x_3$-axis under homogeneous Dirichlet boundary conditions. 
Jiang, Li and Lu \cite{JiangLiLu} recently extended Serrin's results to more general conical domains. They proved that all axisymmetric homogeneous solutions fall into three classes characterized by two parameters depending on the conical angle, and also established corresponding existence and nonexistence results of such solutions. 
In \cite{LiLiYan2018-1}, Li, Li and Yan studied axisymmetric homogeneous solutions of the three-dimensional stationary Navier-Stokes equations that are smooth in $\mathbb{R}^3$ except the negative $x_3$-axis. They classified all such solutions with no swirl, and established the existence and nonexistence of solutions with nonzero swirl nearby the no-swirl solutions. 
Subsequent works \cite{LiLiYan2018-2, LiLiYan2019-3} further established analogous results for axisymmetric homogeneous solutions of stationary Navier-Stokes equations in $\mathbb{R}^3$ smooth away from the entire $x_3$-axis. 
They \cite{LiLiYan2019viscositylimit} also investigated the vanishing viscosity limit of $(-1)$-homogeneous axisymmetric no-swirl solutions to the stationary Navier-Stokes equations in $\mathbb{R}^3\setminus\{x_3\textrm{-axis}\}$. As the viscosity tends to zero, some sequences of solutions of Navier-Stokes equations converge  to solutions of the Euler equations (\ref{eq:euler}) in $C^m_{\rm{loc}}(\mathbb{R}^3\setminus\{x_3\textrm{-axis}\})$ for any integer $m\geqslant 0$, while other sequence exhibit boundary layer or interior layer behaviors. 

Luo and Shvydkov \cite{LuoShvydoky} studied the classification of $(-\alpha)$-homogeneous solutions to the two-dimensional stationary Euler equations with locally finite energy, where $\alpha \leqslant 1$. For $\alpha = 1$, explicit exact solutions can be derived. For all $\alpha$ except those in $(-1/3, 1/4) \setminus \{0 \}$, they obtained a complete classification of such solutions. 
For $1/2<\alpha<1$, they proved only trivial solutions, parallel shear and rotational flows, exist. On the other hand, for $\alpha \leqslant 1/2$, they found numerous solutions, which exhibit hyperbolic, parabolic, or elliptic streamline structures. 
Notably, when $\alpha<-7/2$, the number of distinct non-trivial elliptic solutions is given by $\sharp\{(2, \sqrt{2(1-\alpha)}) \cap \mathbb{N} \}$. 
Shvydkov \cite{Shvydkov} later investigated the classification of $(-\alpha)$-homogeneous solutions to the three-dimensional steady Euler equations and established several nonexistence results, including $(-1)$-homogeneous solutions in $ C^1(\mathbb{R}^3 \setminus \{\bm{0} \})$ and axisymmetric $(-\alpha)$-homogeneous solutions for $0<\alpha<2$. 
Subsequently, Abe \cite{Abe} proved the existence of axisymmetric $(-\alpha)$-homogeneous solutions for all $\alpha \in \mathbb{R} \setminus [0,2]$, along with axisymmetric homogeneous solutions possessing certain special properties. 
For further investigations into homogeneous solutions of the Euler equations and Navier-Stokes equations, see the above citations and the references therein.

In this paper, we study the rigidity problem for homogeneous solutions of the two-dimensional  Euler equations (\ref{eq:euler}) in sector-type domains. Let $(r, \theta)$ be the standard polar coordinates in $\mathbb{R}^2$, and $ \bm{e_\theta}=(-\sin \theta,\cos \theta)^T$, $\bm{e_r}=(\cos \theta,\sin \theta)^T$ be the corresponding unit vectors. A two dimensional vector field $\vu$ can be written as $\vu=u_\theta \bm{e_\theta}+u_r \bm{e_r}$. Let $0\leqslant a<b\leqslant \infty$ and $\theta_0 \in (0,2\pi]$, define the sector-type region
\begin{equation}\label{eq:domain}
    \Omega_{a,b,\theta_0} := \{(r,\theta): a<r<b, \ 0<\theta<\theta_0\}.
\end{equation}
When $a=0$, $\Omega_{a,b,\theta_0}$ represents a standard sector, when $a>0$, it is a truncated sector.

We consider the following question: \emph{given $\alpha\in \mathbb{R}$, under what condition will a solution $(\vu, P)$ of (\ref{eq:euler})  in $\Omega_{a, b, \theta_0}$ be $(-\alpha)$-homogeneous?} In particular, if $(\vu, P)$ is $(-\alpha)$-homogeneous on the boundary of $\Omega_{a, b, \theta_0}$, is it also $(-\alpha)$-homogeneous inside $\Omega_{a, b, \theta_0}$? The conditions vary for different types of domains depending on whether $a=0$ or $a>0$, and $b<\infty$ or $b=\infty$. We will study this problem for the following domains: $\Omega_{1,2,\theta_0}$ (Theorem \ref{thm:1}-\ref{thm:3}), $\Omega_{1,\infty,\theta_0}$ and $\Omega_{0,1,\theta_0}$ (Theorem \ref{thm:4}), and $\Omega_{0,\infty,\theta_0}$ (Theorem \ref{thm:5}). 
The results can be naturally extended to more general domains $\Omega_{a,b,\theta_0}$, $\Omega_{a,\infty,\theta_0}$ and $\Omega_{0,b,\theta_0}$, respectively, for any $0<a<b<\infty$.

If $(\vu,P)$ is a $(-\alpha)$-homogeneous solution of (\ref{eq:euler}), then $\vu$ and $P$ can be expressed as
\begin{equation}\label{eq:u:P:-alpha}
    \vu=\frac{v(\theta)}{r^{\alpha}} \bm{e_\theta}+\frac{f(\theta)}{r^{\alpha}} \bm{e_r}, \qquad P = \frac{p}{r^{2\alpha}},
\end{equation}
for some constant $p$ and functions $v(\theta), f(\theta)$ satisfying
\begin{equation}\label{eq:Euler:-alpha}
    \begin{cases}
    (1-\alpha)f(\theta)+v'(\theta)=0, \\
    v(\theta)f'(\theta)= \alpha f^2(\theta) + v^2(\theta)+2\alpha p.
    \end{cases}
\end{equation}
See Appendix for the derivation of (\ref{eq:u:P:-alpha}) and (\ref{eq:Euler:-alpha}). 
In particular, when $\alpha=1$, the above implies 
\begin{equation}\label{eq:Euler:-1}
    v \equiv c, \qquad v f'(\theta) = f^2(\theta) + v^2 + 2 p,
\end{equation}
for some constant $c$.

Consider the boundary conditions of a $(-\alpha)$-homogeneous solution $(\vu, P) \in C^2(\overline{\Omega_{a,b,\theta_0}}\setminus\{\bm{0}\})$ of (\ref{eq:euler}) on $\partial \Omega_{a, b, \theta_0}\setminus\{\bm{0}\}$. The above implies it satisfies
\begin{equation}\label{eq:BC:hom:-alpha:theta}
    u_\theta|_{\theta=0}=\frac{c_1}{r^{\alpha}}, \quad u_\theta|_{\theta=\theta_0}=\frac{c_2}{r^{\alpha}},
\end{equation}
for some constants $c_1, c_2$. 
Moreover, if $(a, b)=(1, 2)$, then 
\begin{equation}\label{eq:BC:hom:-alpha:r}
     \vu|_{r=1} = 2^{\alpha} \vu|_{r=2}, \quad 
    \p_r u_\theta|_{r=1}=2^{\alpha+1} \p_r u_\theta|_{r=2}. 
\end{equation}
Given a solution of (\ref{eq:euler}) in $\Omega_{a, b, \theta_0}$ satisfying (\ref{eq:BC:hom:-alpha:theta}), and (\ref{eq:BC:hom:-alpha:r}) when $(a, b)=(1, 2)$, we investigate if $(\vu, P)$ is $(-\alpha)$-homogeneous in $\Omega_{a, b, \theta_0}$. Similar as \cite{HamelNadirashvili2017,HamelNadirashvili2019,HamelNadirashvili2023}, we consider the problem for solutions with no stagnation point in $\overline{\Omega_{a,b,\theta_0}}\setminus \{\bm{0} \}$. Here we assume a slightly stronger condition that either $|u_r|>0$ or $|u_{\theta}|>0$ in $\overline{\Omega_{a, b, \theta_0}}\setminus \{\bm{0} \}$. 

\medskip

Our first result is about the rigidity of $(-1)$-homogeneous solutions on $\Omega_{1,2,\theta_0}$.
\begin{theorem}\label{thm:1}
    Let $\theta_0\in (0, 2\pi]$, $\Omega_{1,2,\theta_0}$ be defined by $(\ref{eq:domain})$, $\vu \in C^2(\overline{\Omega_{1,2,\theta_0}})$ be a solution 
    of $(\ref{eq:euler})$ satisfying $|u_r|>0$ in $\overline{\Omega_{1,2,\theta_0}}$. Assume there exist some constants $c_1, c_2$, such that that \eqref{eq:BC:hom:-alpha:theta} and \eqref{eq:BC:hom:-alpha:r} hold for $\alpha=1$. 
    Then $c_1=c_2=c$, and the following hold: 

    {\rm (i)} If $c=0$, then $\vu$ is $(-1)$-homogeneous, $P$ is $(-2)$-homogeneous up to a constant translation, and $(\vu,P)$ is given by 
    \eqref{eq:u:P:-alpha} with $\alpha=1$, $v \equiv 0$, $f \equiv b$ and $p \equiv -\frac{b^2}{2}$ for some constant $b\not=0$. 
    
    {\rm (ii)} If $c \neq 0$, assume $\p_\theta (ru_r)|_{\theta=0}=A$ or $\p_\theta (ru_r)|_{\theta=\theta_0}=A$ for some constant $A$.
    Then $\vu$ is $(-1)$-homogeneous, $P$ is $(-2)$-homogeneous up to a constant translation, and $(\vu,P)$ takes the form in \eqref{eq:u:P:-alpha} with $\alpha = 1$ for some $v,f$ satisfying \eqref{eq:Euler:-1} and $f(\theta)\in C^2([0,\theta_0])$ being strictly sign-definite.
\end{theorem}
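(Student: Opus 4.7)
The plan is to proceed in three stages. In \textbf{Stage 1}, I establish $c_1 = c_2 =: c$ by the divergence theorem: applying $\oint \vu\cdot \bm n = 0$ to $\Omega_{1,2,\theta_0}$, the radial scaling $\vu(1,\theta) = 2\vu(2,\theta)$ makes the fluxes across $\{r=1\}$ and $\{r=2\}$ cancel exactly, and the remaining two contributions give $(c_2 - c_1)\log 2 = 0$. For the rigidity statements, \textbf{Stage 2} sets up a scaling extension: I exploit the scaling covariance $(\vu, P)\mapsto (\lambda\vu(\lambda\cdot), \lambda^2 P(\lambda\cdot))$ of \eqref{eq:euler} with $\lambda = 1/2$ to define $\tilde\vu(r,\theta) := (1/2)\vu(r/2,\theta)$ on $\Omega_{2,4,\theta_0}$, which is again a solution; the two scaling conditions in \eqref{eq:BC:hom:-alpha:r} for $\alpha = 1$ encode exactly the $C^2$-matching of the stream function $\psi$ across $\{r = 2\}$, with $\partial_r^2\psi$-matching coming from the $\partial_r u_\theta$-BC and $\partial_\theta^2\psi$- and $\partial_r\partial_\theta\psi$-matching from differentiating the $C^0$ scaling BC in $\theta$. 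Iterating in both directions, one obtains a $C^1$ Euler solution $\vu_{\mathrm{ext}}$ on $\Omega_{0,\infty,\theta_0}$ whose stream function $\psi_{\mathrm{ext}}$ is $C^2$ and, setting $\phi := \psi_{\mathrm{ext}} + c\log r$, is $\log 2$-periodic in the logarithmic radial coordinate $s = \log r$ on the strip $\mathbb R\times(0,\theta_0)$; $(-1)$-homogeneity of $\vu$ is equivalent to $\partial_s\phi \equiv 0$.

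\textbf{Stage 3 (case $c = 0$):} Here $\phi = \psi_{\mathrm{ext}}$; since $|u_r|>0$ makes $\psi_{\mathrm{ext}}$ strictly monotone in $\theta$, the vorticity--stream relation gives $\omega = G(\psi_{\mathrm{ext}})$ for a smooth single-valued $G$, and $\psi_{\mathrm{ext}}$ satisfies $-\Delta_{s,\theta}\psi_{\mathrm{ext}} = e^{2s}G(\psi_{\mathrm{ext}})$ on $\mathbb R\times(0,\theta_0)$. The left-hand side is $\log 2$-periodic in $s$ by construction, while the right-hand side satisfies $e^{2(s+\log 2)}G(\psi_{\mathrm{ext}}(s+\log 2,\theta)) = 4\,e^{2s}G(\psi_{\mathrm{ext}}(s,\theta))$ thanks to periodicity of $\psi_{\mathrm{ext}}$; the only way $e^{2s}G(\psi_{\mathrm{ext}}) = 4\,e^{2s}G(\psi_{\mathrm{ext}})$ can hold is $G \equiv 0$ on the range $[0, M]$ of $\psi_{\mathrm{ext}}$. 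Hence $\omega\equiv 0$ on $\Omega_{1,2,\theta_0}$; writing $\vu = \nabla\Phi$ with $\Phi$ harmonic, $\partial_\theta\Phi = 0$ on $\{\theta = 0, \theta_0\}$, and the scaling BCs at $r = 1, 2$, separating variables $\Phi = \sum_{n\geq 0}R_n(r)\cos(n\pi\theta/\theta_0)$ with each $R_n$ a combination of $r^{\pm n\pi/\theta_0}$ (or $1, \log r$ for $n = 0$), the scaling BCs force $R_n \equiv 0$ for $n \geq 1$, leaving $\Phi = A_0 + b\log r$ with $b \neq 0$; then $\vu = (b/r)\bm{e_r}$ and the first Euler equation gives $P = -b^2/(2r^2) + \mathrm{const}$, yielding (i).

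\textbf{Stage 4 (case $c \neq 0$) and main obstacle.} The extra BC $\partial_\theta(r u_r)|_{\theta = 0} = A$, combined with $V(r, 0) \equiv c$ (so $\partial_r V(r, 0) = 0$), gives $\omega(r, 0) = -A/r^2$; normalizing $\psi_{\mathrm{ext}}(r, 0) = -c\log r$ and using single-valuedness of $G$ identifies $G(\psi) = -A e^{2\psi/c}$, so $\psi_{\mathrm{ext}}$ satisfies the Liouville-type equation $-\Delta\psi_{\mathrm{ext}} = -A e^{2\psi_{\mathrm{ext}}/c}$; in the variables $(\phi, s, \theta)$ this becomes the autonomous equation $-\Delta_{s,\theta}\phi = -A e^{2\phi/c}$ on $\mathbb R\times(0,\theta_0)$ with $\phi$ constant on each $\theta$-boundary and $\log 2$-periodic in $s$. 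Rigidity reduces to $w := \partial_s\phi \equiv 0$; the energy identity
\begin{equation*}
\int_0^{\log 2}\!\!\int_0^{\theta_0} |\nabla w|^2\, d\theta\, ds = -\frac{2A}{c}\int_0^{\log 2}\!\!\int_0^{\theta_0} e^{2\phi/c}\, w^2\, d\theta\, ds,
\end{equation*}
obtained by multiplying $-\Delta w = -(2A/c)e^{2\phi/c}w$ by $w$ (boundary terms vanishing by the Dirichlet and periodic conditions), closes the argument immediately when $-A/c \geq 0$. In the opposite regime, the closing step is the main obstacle; I plan to combine the Poincar\'e inequality on $(0, \theta_0)$ with an a priori upper bound on $e^{2\phi/c}$ coming from the first integral of the 1D ODE reduction of \eqref{eq:Euler:-1}, to ensure the right side is strictly dominated by the Dirichlet form, forcing $w \equiv 0$. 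Once $\phi$ is $s$-independent, $\vu$ is $(-1)$-homogeneous with $f = F'$ satisfying \eqref{eq:Euler:-1} and $C^2$ on $[0,\theta_0]$, and the sign-definiteness of $f$ follows from $|u_r| > 0$.
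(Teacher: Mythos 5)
Your Stages 1--3 are essentially sound, and in places genuinely different from the paper: the divergence-theorem argument for $c_1=c_2$ is a slicker replacement for the paper's corner-matching of the stream function; the scaling extension across $r=2^k$ (whose $C^1$-matching of $u_r$ indeed follows from the divergence-free condition, and whose vorticity-matching follows from the two conditions in \eqref{eq:BC:hom:-alpha:r}) replaces the paper's boundary-limit identities $g(h(\theta))=4g(h(\theta)+c\ln 2)$; and in case (i) your harmonic-potential/Fourier argument bypasses the paper's sliding-method Liouville lemma entirely. You do take for granted the functional relation $\omega=G(\psi_{\mathrm{ext}})$ with a single $C^1$, single-valued $G$ on the (only piecewise-$C^2$) extended solution; this is the content of the paper's Lemmas \ref{lem:1}--\ref{lem:3} and needs a level-set/constancy-along-streamlines argument, but it is fixable and I regard it as a technical omission rather than a gap.

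The genuine gap is the closing step of Stage 4. After reducing to the autonomous problem $\Delta_{s,\theta}\phi=-Ae^{-2\phi/c}$ (paper's normalization) with $\phi$ constant on $\theta=0,\theta_0$ and $\ln 2$-periodic in $s$, your energy identity for $w=\partial_s\phi$ gives $\int|\nabla w|^2=-\tfrac{2A}{c}\int e^{-2\phi/c}w^2$ and forces $w\equiv 0$ only when $Ac\geqslant 0$. The opposite regime $Ac<0$ cannot be excluded: by \eqref{eq:Euler:-1}, $A=f'(0)$ satisfies $cA=f(0)^2+c^2+2p$, and the Appendix's explicit solutions with $c^2+2p<0$ (the constant and $\tanh$-type families) show that $cA<0$ genuinely occurs under the hypotheses of Theorem \ref{thm:1}(ii). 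In that regime your proposed patch --- Poincar\'e on $(0,\theta_0)$ plus an a priori bound on $e^{-2\phi/c}$ ``from the first integral of the 1D ODE reduction'' --- is circular: before rigidity is proved, $\phi$ is a genuinely two-dimensional solution and there is no bound guaranteeing $\tfrac{2|A|}{|c|}\sup e^{-2\phi/c}<(\pi/\theta_0)^2$, so the zeroth-order term need not be dominated by the Dirichlet form; a pure energy argument cannot work here, since the linearized operator loses coercivity exactly when the Liouville nonlinearity is ``focusing.'' What is missing is the use of the monotonicity $\partial_\theta\psi$ of fixed sign (from $|u_r|>0$) in the rigidity step itself: the paper's Lemma \ref{lem:4} runs a sliding/moving-plane argument in which the zeroth-order coefficient enters only through differences $F(y+\tau\xi_2)-F(y)$, which vanish because the radial weight has been rendered constant, so both signs of $A/c$ are handled uniformly. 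Without an ingredient of this kind (or some other replacement exploiting $\partial_\theta\phi\neq 0$), your case (ii) is proved only for $Ac\geqslant 0$.
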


To the best of our knowledge, this is the first result concerning the rigidity of solutions to the Euler equations that is characterized by homogeneous solutions. It in particular extends Theorem 1.1 in \cite{ConstantinDrivasGinsberg2021} to the scheme of $(-1)$-homogeneous solutions. In \cite{ConstantinDrivasGinsberg2021}, the rigidity result of the solution $\vu$ to (\ref{eq:euler}) in a periodic channel says that the solution must be a shear flow. 
In Theorem \ref{thm:1}, we propose some boundary conditions that are natural for $(-1)$-homogeneous solutions. Specifically, the conditions imposed on the lines $r=1$ and $r=2$ resemble the periodic boundary conditions in \cite{ConstantinDrivasGinsberg2021}, differing only by a constant scaling factor. The tangential boundary condition in \cite{ConstantinDrivasGinsberg2021} corresponds to $c=0$ in our theorem, while the shear flow solution corresponds to radial flow (i.e. $u_\theta \equiv 0$) in our theorem. 
Moreover, we also consider the case where $c\not=0$ as given in Theorem \ref{thm:1} (ii), for which a rigidity result can be obtained under an extra assumption that $\partial_{\theta}(ru_r)$ is a constant along $\theta=0$ or $\theta=\theta_0$ on $\partial\Omega_{1, 2, \theta_0}$.  

To prove Theorem \ref{thm:1}, we study the stream function $\psi$ of $\vu$. Due to the solenoidal nature of the vector field, any solution $\vu \in C^2(\overline{\Omega_{1,2,\theta_0}}\setminus \{\bm{0} \})$ can be constructed from a stream function $\psi\in C^3(\overline{\Omega_{a,b,\theta_0}}\setminus \{\bm{0} \})$ via $\vu = \nabla^{\bot} \psi$, i.e.
\begin{equation}\label{thm0:eq0}
    u_\theta=\frac{\p \psi}{\p r}, \quad 
    u_r=-\frac{1}{r} \frac{\p \psi}{\p \theta}, \quad (r,\theta)\in \overline{\Omega_{a,b,\theta_0}} \setminus \{ \bm{0} \}.
\end{equation}
The key step is to show that $\psi=c\ln r+h(\theta)$ for some constant $c$ and some function $h(\theta)$ in $\Omega_{a,b,\theta_0}$, so that $\vu = \nabla^{\bot} \psi$ is $(-1)$-homogeneous. 
By the homogeneity assumption of $\vu$ on $\partial\Omega_{a,b,\theta_0}$, $\psi$ is in this form on $\partial\Omega_{a,b,\theta_0}$. Using the assumption that $|u_r|>0$ in $\overline{\Omega_{a,b,\theta_0}}\setminus \{\bm{0} \}$, we show $\psi$ satisfies an elliptic equation 
\begin{equation}\label{eq_psi}
    \Delta \psi=g(\psi)
\end{equation} for some $C^1$ function $g$. 
Then by applying the sliding method, we establish a Liouville-type theorem (see Lemma \ref{lem:4} below) to conclude the rigidity of $\psi$, which implies that $\vu$ must be $(-1)$-homogeneous. 

We extend the above strategy to establish the rigidity of $(-\alpha)$-solutions on $\Omega_{a, b, \theta_0}$ for $\alpha\ne 1$ and $0\leqslant a<b\leqslant \infty$, under similar assumptions to those in Theorem \ref{thm:1} and some extra boundary conditions. Note if $\vu$ is $(-\alpha)$-homogeneous, then $r^{\alpha}u_r$ depends only on $\theta$, and $\partial_{\theta}(r^{\alpha}u_r)= const$ along $\{\theta=0\}$ and $\{\theta=\theta_0\}$. 

\begin{theorem}\label{thm:2}
    Let $\Omega_{1,2,\theta_0}$ be a domain given by $(\ref{eq:domain})$ with $0<\theta_0<2\pi$, $\alpha \neq 1$, $\vu \in C^2(\overline{\Omega_{1,2,\theta_0}})$ is a solution of $(\ref{eq:euler})$ with $|u_r|>0$ in $\overline{\Omega_{1,2,\theta_0}}$. If \eqref{eq:BC:hom:-alpha:theta} and \eqref{eq:BC:hom:-alpha:r} hold for some constants $c_1$ and $c_2$, then $c_1 \neq c_2$. 
    In addition, assume one of the following holds:

    {\rm{(A1)}} $c_1 c_2>0$, and either $\p_\theta (r^{\alpha} u_r)|_{\theta=0}=c_3$ or $\p_\theta (r^{\alpha} u_r)|_{\theta=\theta_0}=c_4$ for some constants $c_3$, $c_4$;

    {\rm{(A2)}} $c_1=0$, $\alpha>1$ and $\p_\theta (r^{\alpha} u_r)|_{\theta=\theta_0}=c_4$ for some constant $c_4$;

    {\rm{(A3)}} $c_2=0$, $\alpha>1$, and $\p_\theta (r^{\alpha} u_r)|_{\theta=0}=c_3$ for some constant $c_3$; 

    {\rm{(A4)}} $c_1 c_2<0$, $\alpha>1$, $\p_\theta (r^{\alpha} u_r)|_{\theta=0}=c_3$ and $\p_\theta (r^{\alpha} u_r)|_{\theta=\theta_0}=c_4$ for some constants $c_3$ and $c_4$.

    Then $\vu$ is $(-\alpha)$-homogeneous, $P$ is $(-2\alpha)$-homogeneous up to a constant translation, and $(\vu,P)$ takes the form in \eqref{eq:u:P:-alpha} for some $v,f\in C^2([0, \theta_0])$ satisfying \eqref{eq:Euler:-alpha}. 
    Moreover, $v(\theta)$ is strictly monotone; 
    $|v(\theta)|>0$ for $\theta \in (0,\theta_0)$ in cases {\rm{(A1)}}-{\rm{(A3)}}, $v(\theta)$ is sign-changing in case {\rm{(A4)}}, and $f(\theta)$ is strictly sign-definite for $\theta \in [0,\theta_0]$ in all four cases.
\end{theorem}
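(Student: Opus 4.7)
The plan is to adapt the stream-function strategy sketched for Theorem~\ref{thm:1} to the non-logarithmic regime $\alpha\ne 1$, where the candidate homogeneous stream function is
\[
\Phi(r,\theta)\;=\;\tfrac{v(\theta)}{1-\alpha}\,r^{1-\alpha}
\]
instead of $c\ln r+h(\theta)$. First I would prove $c_1\ne c_2$ by integrating $\mathrm{div}\,\vu=0$ over $\Omega_{1,2,\theta_0}$ and using \eqref{eq:BC:hom:-alpha:theta} together with $u_r(1,\theta)=2^\alpha u_r(2,\theta)$ from \eqref{eq:BC:hom:-alpha:r}. The total flux collapses to
\[
(c_2-c_1)\,\frac{2^{1-\alpha}-1}{1-\alpha}\;+\;(2-2^{\alpha})\int_0^{\theta_0} u_r(2,\theta)\,d\theta\;=\;0,
\]
and when $\alpha\ne 1$ the factor $2-2^{\alpha}\ne 0$, while $|u_r|>0$ keeps the integral nonzero and sign-definite; hence $c_1\ne c_2$, with the sign of $c_2-c_1$ fixed by the sign of $u_r$.

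Next I would introduce a stream function $\psi\in C^3(\overline{\Omega_{1,2,\theta_0}})$ with $\vu=\nabla^\perp\psi$, integrate \eqref{eq:BC:hom:-alpha:theta}--\eqref{eq:BC:hom:-alpha:r} to get the boundary values of $\psi$, and use the flux identity above to verify that the additive constant in $\psi$ can be uniquely chosen so that
\[
\psi(r,0)=\tfrac{c_1}{1-\alpha}r^{1-\alpha},\qquad \psi(r,\theta_0)=\tfrac{c_2}{1-\alpha}r^{1-\alpha},\qquad \psi(2,\theta)=2^{1-\alpha}\psi(1,\theta).
\]
Setting $W(\theta):=\psi(1,\theta)$ and $\Phi(r,\theta):=r^{1-\alpha}W(\theta)$, these data give $\Phi=\psi$ on $\partial\Omega_{1,2,\theta_0}$, and the task becomes $\psi\equiv\Phi$ on $\Omega_{1,2,\theta_0}$.

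To close this, I would pass to log-radial coordinates $s=\ln r$ and work with $\hat\psi(s,\theta):=e^{(\alpha-1)s}\psi(e^s,\theta)$ on $[0,\ln 2]\times[0,\theta_0]$. Since $|u_r|>0$, the vorticity is a $C^1$ function of $\psi$ (exactly as in Theorem~\ref{thm:1}), so $\Delta\psi=g(\psi)$ on $\Omega_{1,2,\theta_0}$; the extra hypothesis in each of (A1)--(A4) on constancy of $\partial_\theta(r^{\alpha}u_r)$ at $\theta=0$ and/or $\theta=\theta_0$, fed into $\Delta\psi=g(\psi)$ restricted to that arc, identifies $g$ as the power law $g(y)=K y^{(\alpha+1)/(\alpha-1)}$ dictated by $(-\alpha)$-homogeneity, on the range of $\psi$ along that arc. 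A direct substitution shows this power-law form is exactly what makes the equation for $\hat\psi$ autonomous in $s$; the normalized BCs give $\hat\psi(\cdot,0),\hat\psi(\cdot,\theta_0)$ constant in $s$, and $\hat\psi(0,\theta)=\hat\psi(\ln 2,\theta)=W(\theta)$ with matching first and second $s$-derivatives at the endpoints (by the full strength of \eqref{eq:BC:hom:-alpha:r}). So $\hat\psi$ admits a $C^2$ $\ln 2$-periodic extension to $\mathbb{R}\times[0,\theta_0]$. Applying the sliding-based Liouville-type argument of Lemma~\ref{lem:4} on the resulting periodic strip, with no stagnation supplied by $|\partial_\theta\hat\psi|>0$, forces $\hat\psi$ to depend only on $\theta$, giving $\psi=r^{1-\alpha}W(\theta)=\Phi$ and thus $(-\alpha)$-homogeneity of $\vu$; the statements on $P$, $v$, $f$ then follow from \eqref{eq:u:P:-alpha}--\eqref{eq:Euler:-alpha}.

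The main obstacle is the range issue in this identification of $g$: the assumption on $\partial_\theta(r^\alpha u_r)$ at a single boundary arc fixes $g$ only on the range of $\psi$ along that arc, while the sliding method needs the same power-law $g$ on the full range of $\psi$ in the interior. This is precisely why the four cases impose different side conditions: in (A1), $c_1c_2>0$ keeps $\psi$ of a single sign and a single arc suffices; in (A2)/(A3), the vanishing endpoint combined with $\alpha>1$ makes $(\alpha+1)/(\alpha-1)>0$, so the power law extends smoothly through $y=0$ and one arc still does the job; in (A4), the sign change of $\psi$ across the interior forces matching $g$ from both sides of $0$, hence conditions on \emph{both} arcs. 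Once homogeneity is established, $|u_r|>0$ makes $f$ strictly sign-definite on $[0,\theta_0]$, the ODE $v'=-(1-\alpha)f$ then makes $v$ strictly monotone, and combining monotonicity with the endpoint values $v(0)=c_1$, $v(\theta_0)=c_2$ yields $|v|>0$ on $(0,\theta_0)$ in cases (A1)--(A3) and exactly one sign change of $v$ in case (A4).
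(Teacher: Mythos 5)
Your overall route is the same as the paper's (stream function, boundary identification $\psi=h(\theta)r^{1-\alpha}$ on $\partial\Omega_{1,2,\theta_0}$, the relation $\Delta\psi=g(\psi)$, the substitution $\Psi(s,\theta)=e^{(\alpha-1)s}\psi(e^s,\theta)$, identification of $g$ as a power law, and the sliding Liouville lemma), and your divergence-theorem proof that $c_1\ne c_2$ is a correct, slightly more direct variant of the paper's corner-consistency argument. However, there is a genuine gap exactly at the point you flag: the identification of $g$ on the \emph{full} range $J$ of $\psi$. The condition $\p_\theta(r^\alpha u_r)|_{\theta=0}=c_3$ pins down $g(z)=K|z|^{(\alpha+1)/(\alpha-1)}$ only for $z$ between $C_1:=c_1/(1-\alpha)$ and $2^{1-\alpha}C_1$, and similarly at $\theta=\theta_0$ only between $C_2$ and $2^{1-\alpha}C_2$; but $J$ is the interval between $\min\{C_1,C_2,2^{1-\alpha}C_1,2^{1-\alpha}C_2\}$ and the corresponding maximum, which strictly contains these arc ranges in general (e.g.\ in (A1) with $\alpha>1$ and $0<C_1<C_2$, the bottom arc gives $g$ only on $(2^{1-\alpha}C_1,C_1)$ while $J=(2^{1-\alpha}C_1,C_2)$; in (A4) even the union of both arc ranges misses the two subintervals adjacent to $0$). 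Your stated resolutions --- ``single sign, so one arc suffices'' in (A1), and ``the power law extends through $y=0$'' in (A2)--(A4) --- do not close this gap: single-signedness of $\psi$ says nothing about the arc range exhausting $J$, and there is nothing yet forcing $g$ to be the same power law off the arc range.

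What is missing is the scaling functional equation for $g$. Letting $s\to 0^+$ and $s\to(\ln 2)^-$ in the transformed equation and using the matching of $\partial_s^j\Psi$ ($j=0,1,2$) at $s=0,\ln 2$ (which is where the full strength of \eqref{eq:BC:hom:-alpha:r} enters, not merely to build a periodic extension) yields
\begin{equation*}
    g(z)=2^{1+\alpha}\,g\!\left(2^{1-\alpha}z\right),\qquad z\in\bigl(\min\{C_1,C_2\},\max\{C_1,C_2\}\bigr),
\end{equation*}
and it is this relation, combined with the power-law formula on the fundamental arc interval (the paper's Lemma \ref{lem1_3}), that propagates $g(z)=C^{\pm}|z|^{(\alpha+1)/(\alpha-1)}$ to all of $J^{\pm}$, after which the equation for $\Psi$ becomes genuinely autonomous in $s$ and your periodic-extension/sliding step (equivalently, case (ii) of Lemma \ref{lem:4}) is legitimate. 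Note also that without this full-range identification your periodic extension does not even satisfy a single PDE on the infinite strip, since $e^{(1+\alpha)s}g(e^{(1-\alpha)s}\Psi)$ is not $\ln 2$-periodic in $s$ unless $g$ obeys the above scaling law. A further small imprecision: in (A2)--(A4) what you need at $z=0$ is the exponent $(\alpha+1)/(\alpha-1)>1$ (guaranteed by $\alpha>1$) to make $g$ Lipschitz up to $0$, not merely positivity of the exponent. The remaining steps (recovering $v=(1-\alpha)h$, $f=-h'$, the monotonicity and sign statements, and the form of $P$) are fine.
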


\begin{remark}
    In Theorem \ref{thm:2}, the condition $\alpha>1$ in \textrm{(A2)}-\textrm{(A4)} is imposed to ensure that the function $g$ on RHS of (\ref{eq_psi}) are Lipschitz on the range of $\psi$. 
    In the case when (A1) holds, $0 \notin \bar{J}$ with $J:=\text{dom}(g)$, thus $g$ is always Lipschitz continuous on $\bar{J}$, regardless of the value of $\alpha$.
\end{remark}

\begin{remark}
    In Theorem \ref{thm:2} we consider $0<\theta_0<2\pi$. If $\theta_0=2\pi$ and $\alpha \neq 1$, then any $(-\alpha)$-homogeneous solutions satisfy (\ref{eq:Euler:-alpha}), and the first line of (\ref{eq:Euler:-alpha}) implies that $\int_0^{2\pi} f(\theta) d \theta = 0$. This in turn says that $f$ must vanish at some point in $[0,2\pi]$, which is a contradiction to our condition $|u_r|>0$ in $\overline{\Omega_{1,2,\theta_0}}$.
    Note this does not violate the existence of $(-\alpha)$-homogeneous solutions of the Euler equations (\ref{eq:euler}) in $\mathbb{R}^2 \setminus \{\bm{0} \}$. 
\end{remark}

Theorem \ref{thm:1} and Theorem \ref{thm:2} are established under the condition $|u_r|>0$, which excludes circular flows (i.e. $u_r \equiv 0$). By instead imposing the condition $|u_\theta|>0$ in $\overline{\Omega_{1,2,\theta_0}}$, we establish the following rigidity result when $\alpha\geqslant 1$. 

\begin{theorem}\label{thm:3}
    Let $\alpha \geqslant 1$, $0<\theta_0\leqslant 2\pi$, $\Omega_{1,2,\theta_0}$ be defined by $(\ref{eq:domain})$. Assume $\vu \in C^2(\overline{\Omega_{1,2,\theta_0}})$ is a solution to $(\ref{eq:euler})$ with $|u_\theta|>0$ in $\overline{\Omega_{1,2,\theta_0}}$, satisfying \eqref{eq:BC:hom:-alpha:theta} for some constants $c_1$ and $c_2$, and 
    \begin{equation*}
        u_r|_{\theta=0}= u_r|_{\theta=\theta_0}, \quad 
        u_r|_{r=1}= u_r|_{r=2}=0. 
    \end{equation*}
    Then $c_1 =c_2=c \neq 0$. Moreover, assume 
    \begin{equation*}
        c^{-1}\p_\theta(r^\alpha u_r)|_{\theta=0} \geqslant 1-\alpha \qquad \text{or} \qquad c^{-1}\p_\theta (r^\alpha u_r)|_{\theta=\theta_0} \geqslant 1-\alpha. 
    \end{equation*}
    
    \noindent
    Then $\vu$ is $(-\alpha)$-homogeneous, $P$ is $(-2\alpha)$-homogeneous up to a constant translation, and $(\vu, P)$ is given by \eqref{eq:u:P:-alpha} with $f \equiv 0$, $v \equiv c$, $p \equiv -c^2/2\alpha$. 
\end{theorem}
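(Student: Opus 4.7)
The plan is to introduce the stream function $\psi\in C^3$ with $u_\theta=\partial_r\psi$ and $u_r=-r^{-1}\partial_\theta\psi$, show that $\psi$ coincides on $\partial\Omega_{1,2,\theta_0}$ with the candidate homogeneous stream function
\[
\psi^*(r):=\begin{cases}-\dfrac{c}{\alpha-1}\,r^{1-\alpha},& \alpha>1,\\[0.5em] c\ln r,& \alpha=1,\end{cases}
\]
and then invoke the Liouville-type Lemma \ref{lem:4} to conclude $\psi\equiv\psi^*$ in the interior.

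For the boundary identification, the hypothesis $u_r|_{r=1}=u_r|_{r=2}=0$ makes $\partial_\theta\psi\equiv0$ on the two arcs, so $\psi(1,\theta)\equiv K_1$ and $\psi(2,\theta)\equiv K_2$. Integrating $\partial_r\psi=c_i/r^\alpha$ along each ray from $r=1$ to $r=2$ yields $K_2-K_1=\int_1^2 c_i\,s^{-\alpha}\,ds$ for $i=1,2$, and since $\int_1^2 s^{-\alpha}\,ds\neq 0$, this forces $c_1=c_2=:c$. If $c=0$, then $u_\theta\equiv 0$ on both rays, contradicting $|u_\theta|>0$, so $c\neq 0$. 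Normalizing so that $K_1=\psi^*(1)$, the same integrations give $\psi=\psi^*$ on all of $\partial\Omega_{1,2,\theta_0}$.

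Next, WLOG $c>0$, so $u_\theta=\partial_r\psi>0$ throughout $\overline{\Omega_{1,2,\theta_0}}$ and $\psi(\cdot,\theta)$ is strictly monotone in $r$ for each fixed $\theta$, with range the bounded interval $J:=[\psi^*(1),\psi^*(2)]$. The level sets of $\psi$ are then connected arcs joining the two rays, and the 2D Euler transport $\vu\cdot\nabla\omega=0$ yields $\omega=G(\psi)$ for some $C^1$ function $G:J\to\mathbb{R}$, which is Lipschitz on $\overline{J}$ (compact, and bounded away from $0$ when $\alpha>1$). Thus $\psi$ solves the semilinear elliptic equation $\Delta\psi=G(\psi)$ in $\Omega_{1,2,\theta_0}$ with $\psi=\psi^*$ on $\partial\Omega_{1,2,\theta_0}$. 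The extra hypothesis $c^{-1}\partial_\theta(r^\alpha u_r)|_{\theta=0}\geq 1-\alpha$ is calibrated so that $\psi^*$ realizes equality: translated through $\partial_\theta\psi=-ru_r$ and $\partial_\theta^2\psi=-r\partial_\theta u_r$, it produces a one-sided boundary comparison between $\psi$ and $\psi^*$ at the ray $\theta=0$ (or $\theta_0$ in the other case), which is precisely the input needed for the sliding-method hypothesis of Lemma \ref{lem:4}. Applying Lemma \ref{lem:4} gives $\psi\equiv\psi^*$ in $\overline{\Omega_{1,2,\theta_0}}$, whence $u_r\equiv 0$ and $u_\theta\equiv c/r^\alpha$, and substituting into \eqref{eq:euler} recovers $P=-c^2/(2\alpha\,r^{2\alpha})$ up to an additive constant.

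The main obstacle is the last step. In Theorems \ref{thm:1}-\ref{thm:2}, the assumption $|u_r|>0$ supplies the strict monotonicity of $\psi$ in $\theta$ that directly drives the sliding argument, whereas here the target conclusion $u_r\equiv 0$ is exactly the degenerate boundary case of that monotonicity, so the earlier setup cannot be invoked verbatim. One must instead exploit the one-sided information at a single ray together with the periodic-type matching $u_r|_{\theta=0}=u_r|_{\theta=\theta_0}$ to rule out non-trivial $\theta$-dependent solutions of $\Delta\psi=G(\psi)$ with Dirichlet data $\psi^*$; the threshold $1-\alpha$ in the hypothesis and the sign condition $\alpha\geq 1$ are precisely what make Lemma \ref{lem:4} applicable in this degenerate scenario.
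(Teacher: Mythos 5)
Your boundary identification is fine and matches the paper: $u_r|_{r=1}=u_r|_{r=2}=0$ makes $\psi$ constant on the two arcs, integrating $\partial_r\psi=c_i/r^{\alpha}$ forces $c_1=c_2=c$, and $|u_\theta|>0$ gives $c\neq 0$; likewise the reduction to $\Delta\psi=g(\psi)$ with $\psi=\psi^*$ on the boundary is the paper's route (Lemma \ref{lem:2}(ii) plus Lemma \ref{lem:3}). But the decisive step is exactly the one you leave open. You acknowledge that the sliding argument of Theorems \ref{thm:1}--\ref{thm:2} cannot be run in the $\theta$-direction (monotonicity there is what you are trying to prove), and then only gesture at "one-sided information at a single ray" being "precisely the input needed for Lemma \ref{lem:4}". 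That is not how Lemma \ref{lem:4} is structured: its hypotheses are constant Dirichlet data on the two $y$-level boundaries, monotonicity $(c_2-c_1)\psi_y\geqslant 0$, the sign condition $F'\geqslant 0$, $(c_2-c_1)g\leqslant 0$, and a periodicity/reflection condition in $x$; there is no "one-sided comparison with $\psi^*$ at a ray" hypothesis. The missing idea is to swap the roles of the variables: after $s=\ln r$, $\Psi(s,\theta)=\psi(e^s,\theta)$ satisfies $\partial_s^2\Psi+\partial_\theta^2\Psi=e^{2s}g(\Psi)$, the strict monotonicity needed for the sliding is in $s$ (from $|u_\theta|>0$, with $\Psi$ equal to the constants $0$ and $c\ln 2$, resp.\ $\tfrac{c}{1-\alpha}$ and $\tfrac{c}{1-\alpha}2^{1-\alpha}$, on $\{s=0\}$ and $\{s=\ln 2\}$), while $\theta$ plays the role of the periodic direction: $\Psi(s,0)=\Psi(s,\theta_0)$ from the equal Dirichlet data, $\partial_\theta\Psi(s,0)=\partial_\theta\Psi(s,\theta_0)$ from $u_r|_{\theta=0}=u_r|_{\theta=\theta_0}$, and the second-order matching from the equation; this is condition (ii) of Lemma \ref{lem:4} with $x=\theta$, $y=s$.

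Relatedly, the hypothesis $c^{-1}\partial_\theta(r^\alpha u_r)\geqslant 1-\alpha$ is not a boundary comparison between $\psi$ and $\psi^*$; its role is to fix the sign of the nonlinearity. Letting $\theta\to 0^+$ (or $\theta_0^-$) in the transformed equation and using that the boundary trace $\Psi(s,0)$ sweeps the whole range $J$, one gets $g(cs)=e^{-2s}\partial_\theta^2\Psi(s,0)$ (resp.\ the analogous identity for $\alpha>1$), so the hypothesis yields $(c_2-c_1)g\leqslant 0$ on all of $J$ — the sign condition Lemma \ref{lem:4} needs because $F(s)=e^{2s}$ is non-constant with $F'>0$; this is also where $\alpha\geqslant 1$ enters. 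Without this identification your invocation of Lemma \ref{lem:4} has no verified hypotheses, so the proof is incomplete. A secondary flaw: Lipschitz continuity of $g$ on $\overline{J}$ does not follow from $g\in C^1(J)$ plus compactness of $\overline{J}$ (a $C^1$ function on an open interval can have unbounded derivative at the endpoints); the paper obtains it by differentiating the boundary identity above in $s$ and using $\Psi\in C^3(\overline{D})$.
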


Theorem \ref{thm:3} generalizes Theorem 1.1 in \cite{HamelNadirashvili2023} to the framework of homogeneous solutions in sector-type domains, where  \cite{HamelNadirashvili2023}  shows  the solution of  (\ref{eq:euler}) is a circular flow if $\theta_0=2\pi$, $|\vu|>0$ and $u_r|_{r=1}= u_r|_{r=2}=0$.

Now we consider regions $\Omega_{a, b, \theta_0}$ when $a=0$ or $b=\infty$. The following result presents a rigidity result for $(-1)$-homogeneous solutions in $\Omega_{0,1,\theta_0}$ and $\Omega_{1,\infty,\theta_0}$, under the assumption that $u_r$ has no stagnation point.

\begin{theorem}\label{thm:4}
    Let $\theta_0 \in (0, 2\pi]$, $(a, b)=(0, 1)$ or $(1, \infty)$, and $\Omega_{a, b,\theta_0}$ be defined by $(\ref{eq:domain})$, $\vu \in C^2(\overline{\Omega_{a, b,\theta_0}} \setminus \{\bm{0} \})$ be a solution to $(\ref{eq:euler})$ satisfying $|u_r|>0$ in $\overline{\Omega_{a, b,\theta_0}}\setminus \{\bm{0} \}$. 
    Assume $u_\theta|_{\p \Omega_{a,b,\theta_0}}=c/r$ for some constant $c$, and one of the following holds in $\overline{\Omega_{a,b,\theta_0}} \setminus \{\bm{0} \}$:
    
    {\rm (B1)} $c=0$, $u_r<0$, and $\p_\theta u_r|_{r=1}\geqslant \p_r(ru_\theta)|_{r=1}$; 
    
    {\rm (B2)} $c=0$, $u_r>0$, and $\p_\theta u_r|_{r=1} \leqslant \p_r(ru_\theta)|_{r=1}$;
    
    {\rm (B3)} $c \vu\cdot \bm{\nu}|_{r=1} > 0$ and $\p_\theta(ru_r)|_{\theta=0}=A$ for some constant $A$;
    
    {\rm (B4)} $c \vu\cdot \bm{\nu}|_{r=1} < 0$ and $\p_\theta(ru_r)|_{\theta=\theta_0}=A$ for some constant $A$, 

    \noindent
    where $\bm{\nu}$ is the outer normal vector of $\partial \Omega_{a,b,\theta_0}$. Then $\vu$ is $(-1)$-homogeneous, $P$ is $(-2)$-homogeneous up to a constant translation, $(\vu, P)$ is in the explicit form \eqref{eq:u:P:-alpha} with $\alpha=1$, $f(\theta)\in C^2([0,\theta_0])$ being strictly sign-definite, and $(v,f)$ satisfying \eqref{eq:Euler:-1}.
\end{theorem}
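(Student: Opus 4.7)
The plan is to extend the stream-function argument sketched for Theorem~\ref{thm:1} to the half-infinite sector domains $\Omega_{1,\infty,\theta_0}$ and $\Omega_{0,1,\theta_0}$. Introduce the stream function $\psi\in C^3(\overline{\Omega_{a,b,\theta_0}}\setminus\{\bm{0}\})$ via \eqref{thm0:eq0}; the tangential datum $u_\theta|_{\p\Omega}=c/r$ integrates to $\psi(r,0)=c\ln r$ and $\psi(r,\theta_0)=c\ln r+A$ along the two radial rays (after a choice of additive constant), and yields the Neumann-type condition $\psi_r(1,\theta)=c$ on the arc $r=1$. The goal is to prove that $\psi=c\ln r+h(\theta)$ for some $h\in C^2([0,\theta_0])$, which is equivalent to the sought $(-1)$-homogeneity of $\vu$.

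The hypothesis $|u_r|>0$ makes $\psi_\theta=-ru_r$ of constant sign on $\overline{\Omega_{a,b,\theta_0}}\setminus\{\bm{0}\}$, so each level set of $\psi$ is a connected graph $\theta=\theta(r)$ running between the two radial boundaries; hence every streamline is a single level set, and the stationary vorticity transport relation $\vu\cdot\nabla\omega=0$ yields $\Delta\psi=\omega=g(\psi)$ for a $C^1$ function $g$ on the range of $\psi$. Passing to the logarithmic radius $s=\ln r$ and setting $\phi(s,\theta):=\psi(e^s,\theta)-cs$, the desired rigidity becomes $\phi_s\equiv 0$, and the PDE reads
\begin{equation*}
\Delta_{s,\theta}\phi=e^{2s}g(\phi+cs)
\end{equation*}
on a half-strip $\{\pm s>0\}\times(0,\theta_0)$, with $\phi(s,0)=0$, $\phi(s,\theta_0)=A$, and the Neumann-type condition $\phi_s(0,\theta)=0$.

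The role of (B1)--(B4) is to make the right-hand side autonomous in $s$. In (B3)--(B4), evaluating $\Delta\psi=g(\psi)$ on the ray where $\p_\theta(ru_r)=A_0$ is imposed (together with the explicit form of $\psi$ on that ray) forces $g(s)=-A_0e^{-2s/c}$ on the range of $\psi$ along the ray; the sign assumption on $c\vu\cdot\bm{\nu}|_{r=1}$ then guarantees that this range covers the full range of $\psi$ in the interior, so the equation for $\phi$ reduces to the autonomous Liouville-type equation $\Delta_{s,\theta}\phi=-A_0e^{-2\phi/c}$. In (B1)--(B2), $c=0$ makes the lateral Dirichlet data constant, and I would combine the sign hypothesis on $\omega|_{r=1}$, the definite sign of $u_r$, and a maximum-principle/Hopf argument to force $\omega\equiv 0$ throughout the domain, so that $\phi$ becomes harmonic with constant Dirichlet data on $\theta=0,\theta_0$.

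With an autonomous semilinear equation in hand and the Neumann datum $\phi_s(0,\theta)=0$, reflect $\phi$ evenly across $s=0$ to produce a $C^2$ classical solution on the full strip $\mathbb{R}\times(0,\theta_0)$ with constant Dirichlet traces on $\theta=0,\theta_0$. A Liouville-type sliding argument in the $s$-direction, in the spirit of Lemma~\ref{lem:4} but adapted to the unbounded strip, then forces $\phi_s\equiv 0$. The main obstacle is executing the sliding on the unbounded strip without any a priori bound on $\phi$ at $s=\pm\infty$: this will require exploiting the Lipschitz regularity of $g$ on its actually attained range (automatic for $g\equiv 0$ in (B1)--(B2) and for the exponential $g$ when its sign and the sign of $c$ make it bounded on the physically attained range in (B3)--(B4)), together with barrier functions to initiate and to close the comparison in the Berestycki--Caffarelli--Nirenberg style. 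Once $\phi=\phi(\theta)$ is established, reversing the change of variables yields $\psi=c\ln r+h(\theta)$ with $h'=-f\in C^2([0,\theta_0])$ of constant sign by $|u_r|>0$, and $(v,f)=(c,f)$ automatically satisfies \eqref{eq:Euler:-1} from the Euler equations, completing the rigidity.
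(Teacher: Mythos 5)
Your overall architecture (stream function, $\Delta\psi=g(\psi)$ via the level-set/functional-dependence lemmas, logarithmic variable $s=\ln r$, Neumann datum $\Psi_s(0,\theta)=0$ from $u_\theta|_{r=1}=c/r$, even reflection / half-strip Liouville sliding) is the paper's route, and your treatment of (B3)--(B4) matches it: evaluating the equation on the ray where $\p_\theta(ru_r)$ is constant gives $g(z)=Ce^{-2z/c}$, the sign of $c\,\vu\cdot\bm{\nu}|_{r=1}$ ensures that ray's $\psi$-values exhaust the interior range, and the equation for $\Psi=\psi(e^s,\theta)-cs$ becomes the autonomous $\Delta_{s,\theta}\Psi=Ce^{-2\Psi/c}$, to which the half-strip case of Lemma~\ref{lem:4} applies. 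Note also that your ``main obstacle'' is not actually one: since $\p_\theta\Psi$ has a fixed sign and $\Psi$ equals $h(0)$ and $h(\theta_0)$ on the lateral boundaries, $\Psi$ is automatically bounded between these two constants, so no barriers or extra Berestycki--Caffarelli--Nirenberg machinery beyond Lemma~\ref{lem:4} (iii)--(iv) (which already builds in the even reflection, using $b^i=0$) is needed.

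The genuine gap is in (B1)--(B2). You propose to first ``force $\omega\equiv 0$ throughout the domain'' by a maximum-principle/Hopf argument and only then treat $\Psi$ as harmonic. The hypotheses only give you the sign of $\omega$ on the arc $r=1$: from $\omega=g(\psi)$ and the fact that with $c=0$ the trace $\psi(1,\cdot)=h(\theta)$ already covers the full range $J$, you get $g\leqslant 0$ (resp.\ $g\geqslant 0$) on $J$, i.e.\ a one-sided sign of the vorticity everywhere --- but nothing in a direct maximum-principle argument upgrades this to $g\equiv 0$ before the rigidity is known (indeed $\omega\equiv 0$ is exactly part of the conclusion, obtained only after $\Psi=h(\theta)$ forces $e^{2s}g(h(\theta))$ to be $s$-independent). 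The equation for $\Psi$ in this case is the non-autonomous $\Delta_{s,\theta}\Psi=e^{2s}g(\Psi)$, and the correct way to close the argument (as the paper does) is to keep $F(s)=e^{2s}$, use $F'\geqslant 0$ together with $(c_2-c_1)g\leqslant 0$ guaranteed by (B1)/(B2), obtain the Lipschitz continuity of $g$ by differentiating the identity $g(h(\theta))=\Psi_{ss}(0,\theta)+\Psi_{\theta\theta}(0,\theta)$ in $\theta$ (using $h'\neq 0$ and $\Psi\in C^3$), and then invoke Lemma~\ref{lem:4} (iii) or (iv) directly; your shortcut to a harmonic $\Psi$ skips precisely the step that makes this case nontrivial, and ``Lipschitz is automatic for $g\equiv 0$'' begs the question. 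With that reduction repaired, the rest of your outline (conclude $\psi=c\ln r+h(\theta)$, read off $f=-h'$ sign-definite, and derive \eqref{eq:Euler:-1} and the pressure from the Euler system) is sound.
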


Finally, we present a rigidity result for $(-\alpha)$-homogeneous solutions in $\Omega_{0,\infty,\theta_0}$.

\begin{theorem}\label{thm:5}
    Let $\alpha \in \mathbb{R}$, $0<\theta_0\leqslant 2\pi$, and $\Omega_{0,\infty,\theta_0}$ be  defined by $(\ref{eq:domain})$, $\vu \in C^2(\overline{\Omega_{0,\infty,\theta_0}} \setminus \{\bm{0} \})$ be a solution to $(\ref{eq:euler})$ satisfying $|u_r|>0$ in $\overline{\Omega_{0,\infty,\theta_0}} \setminus \{\bm{0} \}$. Assume \eqref{eq:BC:hom:-alpha:theta} holds for some constants $c_1$ and $c_2$. 

    {\rm (i)} If $\alpha=1$,   then $c_1=c_2=c$. If $c=0$, assume one of the following holds for some $r_0>0$:

    {\rm (B1')} $c=0$, $u_r<0$, and $\p_\theta u_r|_{r=r_0}\geqslant \p_r(ru_\theta)|_{r=r_0}$;

    {\rm (B2')} $c=0$, $u_r>0$, and $\p_\theta u_r|_{r=r_0} \leqslant \p_r(ru_\theta)|_{r=r_0}$.

    \noindent
    If $c\ne 0$, assume $\p_\theta(ru_r)|_{\theta=0}=A$ or $\p_\theta(ru_r)|_{\theta=\theta_0}=A$ for some constant $A$. Then $\vu$ is $(-1)$-homogeneous, $P$ is $(-2)$-homogeneous up to a constant translation, $(\vu, P)$ is given by \eqref{eq:u:P:-alpha} with $\alpha=1$, $f(\theta)\in C^2([0,\theta_0])$ being strictly sign-definite, and $(v,f)$ satisfying \eqref{eq:Euler:-1}.

    {\rm (ii)} If $\alpha \ne 1$, $0<\theta_0<2\pi$, assume the stream function $\psi$ of $\bm{u}$ satisfies 
    \begin{equation}\label{thm6:eq1500}
        \lim_{r\to \infty} \psi(r, 0)=
        \lim_{r\to \infty} \psi(r, \theta_0), \text{ if } \alpha>1 \quad \text{or} \quad 
        \lim_{r\to 0^+} \psi(r, 0)=
        \lim_{r\to 0^+} \psi(r, \theta_0), \text{ if } \alpha<1. 
    \end{equation}
    Then $c_1\neq c_2$. If one of {\rm (A1)-(A4)} in Theorem \ref{thm:2} holds, then $\vu$ is $(-\alpha)$-homogeneous, $P$ is $(-2\alpha)$-homogeneous up to a constant translation, $(\vu, P)$ is given by \eqref{eq:u:P:-alpha} for some $v,f\in C^2([0, \theta_0])$ satisfying \eqref{eq:Euler:-alpha}. 
    Moreover, $v(\theta)$ is strictly monotone; 
    $|v(\theta)|>0$ for $\theta \in (0,\theta_0)$ in cases {\rm{(A1)}}-{\rm{(A3)}}, $v(\theta)$ is sign-changing in case {\rm{(A4)}}, and $f(\theta)$ is strictly sign-definite for $\theta \in [0,\theta_0]$ in all four cases. 
\end{theorem}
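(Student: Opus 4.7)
The plan is to follow the stream-function plus semilinear PDE framework already used in Theorems~\ref{thm:1}--\ref{thm:4} and push it through to the unbounded sector $\Omega_{0,\infty,\theta_0}$. I introduce $\psi\in C^3(\overline{\Omega_{0,\infty,\theta_0}}\setminus\{\bm{0}\})$ with $\vu=\nabla^{\bot}\psi$ via (\ref{thm0:eq0}). Integrating the boundary data $u_\theta|_{\theta=0}=c_1/r^\alpha$ and $u_\theta|_{\theta=\theta_0}=c_2/r^\alpha$ along the two rays gives
\begin{equation*}
\psi(r,0)=F_1(r)+a_1,\qquad \psi(r,\theta_0)=F_2(r)+a_2,
\end{equation*}
where $F_i(r)=c_i\ln r$ for $\alpha=1$ and $F_i(r)=c_i r^{1-\alpha}/(1-\alpha)$ for $\alpha\ne 1$. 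The hypothesis $|u_r|>0$ gives $\p_\theta\psi$ a strict sign, so $\theta\mapsto\psi(r,\theta)$ is strictly monotone for each $r$; combined with the steady-Euler identity $\vu\cdot\nabla(-\Delta\psi)=0$ (vorticity is constant on streamlines) this yields a semilinear elliptic equation $-\Delta\psi=g(\psi)$ on $\overline{\Omega_{0,\infty,\theta_0}}\setminus\{\bm{0}\}$, with $g$ a $C^1$ function on the range of $\psi$. The Lipschitz regularity of $g$ up to the endpoints of this range is furnished by the constancy assumptions on $\p_\theta(r^\alpha u_r)$ exactly as in Theorems~\ref{thm:1},~\ref{thm:2},~\ref{thm:4}.

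For part (i) I exploit the $\alpha=1$ scale invariance of (\ref{eq:euler}). For each $\lambda>0$ the field $\vu_\lambda(\bm{x}):=\lambda\vu(\lambda\bm{x})$ is again a solution with stream function $\psi(\lambda r,\theta)$ and with the identical traces $c_{1,2}/r$ on the two rays. I compare $\psi$ with the shifted rescaling $\widetilde\psi_\lambda(r,\theta):=\psi(\lambda r,\theta)-c_1\ln\lambda$: on $\{\theta=0\}$ one has $\widetilde\psi_\lambda=\psi$, while on $\{\theta=\theta_0\}$
\begin{equation*}
\widetilde\psi_\lambda(r,\theta_0)-\psi(r,\theta_0)=(c_2-c_1)\ln\lambda.
\end{equation*}
If $c_1\ne c_2$, letting $\lambda\to 0^+$ or $\lambda\to\infty$ produces arbitrarily large signed discrepancies on one ray while $\widetilde\psi_\lambda=\psi$ on the other, and a sliding comparison combined with the strict monotonicity of $\p_\theta\psi$ rules this out. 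Hence $c_1=c_2=:c$, and now $\widetilde\psi_\lambda$ and $\psi$ share identical boundary data. The hypotheses (B1')--(B2') when $c=0$, or the constancy of $\p_\theta(ru_r)$ on a ray when $c\ne 0$, supply precisely the sign conditions needed for a Liouville-type rigidity theorem (the unbounded-sector analogue of Lemma~\ref{lem:4}) that concludes $\widetilde\psi_\lambda\equiv\psi$ for every $\lambda>0$. This is exactly the identity $\psi(r,\theta)=c\ln r+h(\theta)$, so $\vu$ is $(-1)$-homogeneous and $(v,f)$ is recovered from \eqref{eq:Euler:-1}.

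For part (ii) I use instead the $\alpha$-scaling $\vu_\lambda(\bm{x}):=\lambda^\alpha\vu(\lambda\bm{x})$, whose stream function is $\psi_\lambda(r,\theta):=\lambda^{\alpha-1}\psi(\lambda r,\theta)$. Its boundary traces on $\{\theta=0,\theta_0\}$ match $c_{1,2}/r^\alpha$, so $\psi_\lambda-\psi$ is a constant on each ray; the asymptotic hypothesis \eqref{thm6:eq1500} pins these constants down by matching limits at the end where $r^{1-\alpha}$ decays ($r\to\infty$ for $\alpha>1$, $r\to 0^+$ for $\alpha<1$). Once boundary data agree, the rigidity reduces to the bounded-annulus argument of Theorem~\ref{thm:2} under (A1)--(A4), applied on exhausting sub-annuli $\{\rho_1<r<\rho_2\}\subset\Omega_{0,\infty,\theta_0}$ and passed to the limit $\rho_1\to 0^+$, $\rho_2\to\infty$; the uniform control of the Lipschitz constant of $g$ along the range of $\psi$ required for this passage is delivered by (A1)--(A4). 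The conclusion $\psi_\lambda\equiv\psi$ for every $\lambda$ gives $\psi(r,\theta)=r^{1-\alpha}H(\theta)/(1-\alpha)+\mathrm{const}$ and hence $(-\alpha)$-homogeneity of $\vu$; the strict inequality $c_1\ne c_2$ follows from the ODE structure of (\ref{eq:Euler:-alpha}) exactly as in Theorem~\ref{thm:2}.

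The main technical obstacle I expect is the Liouville-type rigidity step in the unbounded sector: in Theorems~\ref{thm:1}--\ref{thm:4} the sliding argument was carried out on compact annular sectors where both a maximum principle and direct control of the radial boundary traces on $\{r=1\}\cup\{r=2\}$ are available, whereas on $\Omega_{0,\infty,\theta_0}$ I must rule out interior extrema of the comparison function $\widetilde\psi_\lambda-\psi$ at both ends $r\to 0^+$ and $r\to\infty$. The asymptotic hypothesis \eqref{thm6:eq1500} in part (ii) is designed precisely to supply the missing radial information in the limit; in part (i) the explicit logarithmic growth $c\ln r+a_i$ of the trace plays the analogous role and is what made the identification $c_1=c_2$ possible. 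Maintaining a uniform Lipschitz bound on $g$ on the full range of $\psi$ as the sub-annuli exhaust the domain, and handling the degeneration of the PDE near the corner $\bm{0}$, are the most delicate technical points.
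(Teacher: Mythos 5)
Your overall framework (stream function, $\Delta\psi=g(\psi)$, logarithmic change of variables, then a Liouville/sliding step) matches the paper's, but the central rigidity step is not actually supplied, and the route you propose for it in part (ii) would fail. You invoke an unproven ``unbounded-sector analogue of Lemma~\ref{lem:4}'' and, for $\alpha\neq1$, propose to run the bounded-annulus argument of Theorem~\ref{thm:2} on exhausting sub-annuli $\{\rho_1<r<\rho_2\}$ and pass to the limit. That cannot work: the rigidity in Theorem~\ref{thm:2} hinges on the radial matching conditions \eqref{eq:BC:hom:-alpha:r} at the two circular boundaries (this is what makes case (ii) of Lemma~\ref{lem:4} applicable after periodic extension), and a solution on $\Omega_{0,\infty,\theta_0}$ satisfies no such conditions on the circles $r=\rho_1,\rho_2$; on a bounded annular sector without them the conclusion is simply false, so there is nothing to pass to the limit. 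The paper's point is that no exhaustion and no new lemma are needed: after $s=\ln r$ the domain becomes the infinite strip $\mathbb{R}\times(0,\theta_0)$, which is exactly case (i) of Lemma~\ref{lem:4} (translation in $s$ plus compactness of translates replaces any radial boundary control), and the lemma is applied directly there.

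A second, related gap is your characterization of the hypotheses $\p_\theta(ru_r)=A$ (resp.\ $\p_\theta(r^\alpha u_r)=c_3,c_4$) as furnishing ``Lipschitz regularity'' or ``sign conditions.'' Their essential role is to determine $g$ explicitly by taking boundary limits in the transformed equation: $g(z)=Ce^{-2z/c}$ when $\alpha=1$, $c\neq0$, and $g(z)=C^{\pm}|z|^{\frac{\alpha+1}{\alpha-1}}$ under (A1)--(A4). Only with these explicit forms does the factor $e^{2s}$ (resp.\ $e^{(1+\alpha)s}$) cancel, making the equation for $\Psi$ autonomous in $s$ so that Lemma~\ref{lem:4} (with $F\equiv\mathrm{const}$) applies; with $F(s)=e^{2s}$ and a sign-free $g$ the sliding method does not start. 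Two smaller points: your scaling argument for $c_1=c_2$ in (i) is left as a hand-wave, whereas the direct argument is that $\psi(r,\theta_0)-\psi(r,0)=(c_2-c_1)\ln r+\mathrm{const}$ must keep a fixed sign because $\p_\theta\psi=-ru_r$ does, which forces $c_1=c_2$; and in (ii) the conclusion $c_1\neq c_2$ must come the same way from $|u_r|>0$ together with \eqref{thm6:eq1500} (which gives $A_1=A_2$), not from the ODE structure of \eqref{eq:Euler:-alpha}, since the theorem asserts it before (A1)--(A4) and hence before homogeneity is known.
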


\begin{remark}\label{rem:thm6}
    For $\alpha \neq 1$, Theorem \ref{thm:5} also holds under the following condition instead of (\ref{thm6:eq1500}): 
    $u_r<0$ (resp. $u_r>0$), and there exists $r_0 \in (0,\infty)$ such that 
    \begin{equation}\label{rem6:eq1}
        \int_0^{\theta_0} u_r(r_0,\varphi){\rm{d}} \varphi \geqslant \frac{c_1-c_2}{1-\alpha} r_0^{-\alpha} \
        ({\rm{resp.}}  \int_0^{\theta_0} u_r(r_0,\varphi){\rm{d}} \varphi \leqslant \frac{c_1-c_2}{1-\alpha} r_0^{-\alpha}). 
    \end{equation}
    See the discussion following the proof of Theorem \ref{thm:5} in Section 3.
\end{remark}

\begin{corollary}\label{cor:1}
    Let $0\leqslant a<b\leqslant \infty$, $\theta_0=2\pi$ and $c_1=c_2=c \in \mathbb{R}$. Assume the conditions of Theorem \ref{thm:1} (i), Theorem \ref{thm:4} or Theorem \ref{thm:5} (i) hold for  $\Omega_{a, b, \theta_0}$ with $(a, b)=(1, 2), (0, 1)$ or $(1, \infty)$, or $(0, \infty)$ respectively. Then $\vu$ is $(-1)$-homogeneous, $P$ is $(-2)$-homogeneous up to a constant translation in the corresponding $\Omega_{a, b, \theta_0}$, and $(\vu,P)$ is in the form \eqref{eq:u:P:-alpha} with $\alpha=1$, $v \equiv c$, $f \equiv d$, $p \equiv - (c^2+d^2)/2$ for some constant $d \neq 0$.
\end{corollary}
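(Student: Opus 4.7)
The plan is to apply the relevant rigidity theorem to $\vu$ on the corresponding $\Omega_{a, b, 2\pi}$ and then exploit the full-angle condition $\theta_0 = 2\pi$ to upgrade the conclusion. In each of the four geometries, invoking Theorem \ref{thm:1} (i), Theorem \ref{thm:4}, or Theorem \ref{thm:5} (i) yields that $\vu$ is $(-1)$-homogeneous with the representation \eqref{eq:u:P:-alpha} for $\alpha = 1$, where $v \equiv c$ is constant, $f \in C^2([0, 2\pi])$ is strictly sign-definite, and the pair $(v, f)$ together with $p$ satisfies \eqref{eq:Euler:-1}, namely
\[
c\, f'(\theta) \;=\; f^2(\theta) + c^2 + 2p.
\]
(For $(a,b)=(1,2)$, Theorem \ref{thm:1} (i) already asserts $f \equiv d$ constant with the correct $p$; the argument below handles all four geometries uniformly.) Because $\theta_0 = 2\pi$, the closure $\overline{\Omega_{a, b, 2\pi}}$ taken in $\mathbb{R}^2$ is a genuine annulus, punctured disk, exterior of a disk, or punctured plane, and the $C^2$ regularity of $\vu$ across the ray $\theta \in \{0, 2\pi\}$ forces $f$ to extend to a $2\pi$-periodic $C^2$ function on $\mathbb{R}$ (here the assumption $c_1 = c_2 = c$ is exactly what makes $u_\theta$ continuous across the slit).

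It remains to classify the $2\pi$-periodic, nowhere-vanishing $C^2$ solutions of the autonomous ODE $c\, f' = f^2 + c^2 + 2p$. If $c = 0$ the equation degenerates to the pointwise identity $f^2 \equiv -2p$, so $f \equiv d := \pm\sqrt{-2p}$; sign-definiteness gives $d \neq 0$, and $p = -d^2/2 = -(c^2 + d^2)/2$. If $c \neq 0$, set $k^2 := -(c^2 + 2p)$. When $k^2 \leqslant 0$ the right-hand side $f^2 - k^2$ is strictly positive on any non-vanishing $f$, so $f'$ keeps a definite sign and $f$ cannot be periodic. When $k^2 > 0$, the equation $c\, f'(\theta) = f^2(\theta) - k^2$ has exactly two equilibria $f = \pm k$, and every non-constant trajectory is trapped in one of the open intervals $(-\infty, -k)$, $(-k, k)$, $(k, \infty)$, on which $f'$ maintains a definite sign; hence any non-constant orbit is strictly monotone and cannot close up. Periodicity therefore forces $f \equiv k$ or $f \equiv -k$, so $f \equiv d$ for some $d \neq 0$ with $d^2 = -(c^2 + 2p)$, yielding $p = -(c^2 + d^2)/2$ as claimed.

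The main (and essentially only) obstacle is the phase-plane step ruling out non-constant periodic orbits of the first-order ODE for $f$; this is routine once the previous theorems have been applied and the $2\pi$-periodicity of $f$ has been secured from the $\theta_0 = 2\pi$ geometry together with the $C^2$ compatibility across the slit.
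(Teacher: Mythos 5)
Your proposal is correct, and its overall skeleton matches the paper's: invoke Theorems \ref{thm:1}(i), \ref{thm:4}, \ref{thm:5}(i) to get $v\equiv c$, $f\in C^2([0,2\pi])$ strictly sign-definite with $cf'=f^2+c^2+2p$, use $\theta_0=2\pi$ to force $f(0)=f(2\pi)$, and then show the only admissible solutions are nonzero constants. Where you diverge is the final ODE step. The paper first integrates $cf'=f^2+c^2+2p$ over $[0,2\pi]$, using $f(0)=f(2\pi)$ and sign-definiteness to conclude $c^2+2p<0$, and then solves the equation explicitly via the substitution $w(\theta)=\exp\bigl(-\int_0^\theta f/c\bigr)$, which turns the Riccati-type equation into $w''=\lambda w$ with $\lambda=-(c^2+2p)/c^2>0$; the closing condition $f(0)=f(2\pi)$ then forces $C_1C_2=0$ in $w=C_1e^{\sqrt{\lambda}\theta}+C_2e^{-\sqrt{\lambda}\theta}$, hence $f$ constant. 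You instead run a qualitative phase-line argument: for $-(c^2+2p)\leqslant 0$ the right-hand side is strictly positive on nonvanishing $f$, so $f$ is strictly monotone and cannot satisfy $f(0)=f(2\pi)$; for $-(c^2+2p)>0$, uniqueness for the autonomous ODE (the right-hand side is locally Lipschitz) traps any non-constant solution strictly between or outside the equilibria $\pm\sqrt{-(c^2+2p)}$, where $f'$ has a fixed sign, again contradicting $f(0)=f(2\pi)$. Your route avoids both the integral identity and the explicit solution formula and treats all signs of $c^2+2p$ uniformly, at the mild cost of implicitly invoking ODE uniqueness (worth stating explicitly); the paper's computation buys the explicit form of all non-periodic solutions as a by-product. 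Both arguments are complete and elementary.
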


In \cite{LuoShvydoky}, it was mentioned that all $(-1)$-homogeneous solutions to (\ref{eq:euler}) in the domain $\mathbb{R}^2 \setminus \{ {\bm{0}} \}$ are given by
\begin{equation*}
    \vu=\frac{d_1}{r} \bm{e_\theta}+\frac{d_2}{r} \bm{e_r},
\end{equation*}
for some $d_1, d_2 \in \mathbb{R}$. 
When $\theta_0<2\pi$, a broader class of $(-1)$-homogeneous solutions in $\Omega_{a, b, \theta_0}$ is admitted by Theorem \ref{thm:1}(ii), Theorem \ref{thm:4} and Theorem \ref{thm:5}(a), since the periodicity of the solution with respect to $\theta$ is not required in $\Omega_{a, b, \theta_0}$. 
When $\theta_0=2\pi$, the above result in \cite{LuoShvydoky} is a consequence of our Theorem \ref{thm:3} and Corollary \ref{cor:1}.  

The paper is organized as follows. In Section 2, we present some key lemmas, including some result on an elliptic equation for the stream function and a Liouville-type theorem. Section 3 is devoted to the proofs of Theorems \ref{thm:1}-\ref{thm:5}. The Appendix provides explicit expressions for the $(-\alpha)$-homogeneous solutions of (\ref{eq:euler}), which serve as fundamental examples in domains of the form (\ref{eq:domain}). 

\section{Preliminaries}

Before proving the main theorems, we first establish several useful lemmas and introduce some notations for clarity. 

Let the domain $\Omega_{a,b,\theta_0}$ be defined as in (\ref{eq:domain}). 
We begin by defining the edges of $\overline{\Omega_{a,b,\theta_0}} \setminus \{\bm{0} \}$. 
The top and bottom edges are given by  
\begin{equation*}
    \p \Omega^{T}_{a,b,\theta_0} := \{(r,\theta) \in \p \Omega_{a,b,\theta_0} \setminus \{\bm{0} \}: \theta=\theta_0 \}, \quad
    \p \Omega^{B}_{a,b,\theta_0} := \{(r,\theta) \in \p \Omega_{a,b,\theta_0} \setminus \{\bm{0} \}: \theta=0 \}. 
\end{equation*}
For $a>0$, the left edge is defined as
\begin{equation*}
    \p \Omega^{L}_{a,b,\theta_0} := \{(r,\theta) \in \p \Omega_{a,b,\theta_0}: r=a \}.
\end{equation*}
For $b<\infty$, the right edge is given by 
\begin{equation*}
    \p \Omega^{R}_{a,b,\theta_0} := \{(r,\theta) \in \p \Omega_{a,b,\theta_0}: r=b \}.
\end{equation*}

Next, we define the vertices of $\overline{\Omega_{a,b,\theta_0}} \setminus \{\mathbf{0}\}$.
If $a>0$, the left vertices exist and are 
\begin{equation*}
    \p \Omega^{TL}_{a,b,\theta_0} := (a,\theta_0), \quad 
    \p \Omega^{BL}_{a,b,\theta_0} := (a,0).
\end{equation*}
If $b<\infty$, the right vertices exist and are  
\begin{equation*}
    \p \Omega^{TR}_{a,b,\theta_0} := (b,\theta_0), \quad 
    \p \Omega^{BR}_{a,b,\theta_0} := (b,0).
\end{equation*}

The geometry of the region $\Omega_{a,b,\theta_0}$ and the associated notation for its boundary segments and vertices are shown in Figure \ref{fig:domain} for $0<a<b<\infty$. 

\begin{figure}[htbp]
  \centering
  \includegraphics[width=0.45\textwidth]{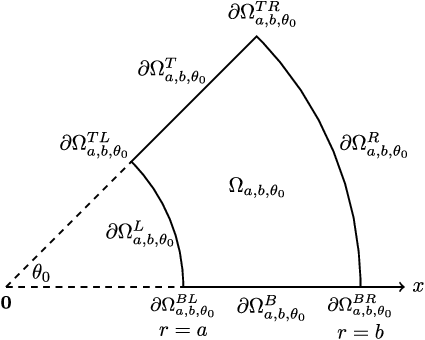}
  \caption{$\Omega_{a,b,\theta_0}$ with its boundary segments and vertices}
  \label{fig:domain}
\end{figure}

The first lemma addresses the local functional dependence between two functions, when their Jacobian determinant vanishes identically. 

\begin{lemma}\label{lem:1}
    Let $\Omega\subseteq \mathbb{R}^2$ be an open domain, $f_1, f_2 \in C^1(\Omega)$. Suppose that 
    $|\nabla f_2|>0$ in $\Omega$, and the Jacibian determinant satisfies 
    \begin{equation*}
        \bigg| \frac{\p (f_1,f_2)}{\p (x,y)} \bigg| \equiv 0, \quad  (x,y) \in \Omega.
    \end{equation*}
    Then for any $(x_0,y_0)\in \Omega$, there exists an open set $U \subset \Omega$ containing $(x_0,y_0)$ and a $C^1$ function $g: f_2(U) \to \mathbb{R}$, such that 
    \begin{equation*}
        f_1(x,y)=g(f_2(x,y)), \quad (x,y) \in U.  
    \end{equation*}
\end{lemma}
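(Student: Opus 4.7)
The plan is to use the implicit function theorem to straighten out the level sets of $f_2$ locally, and then show that $f_1$ is constant on each such level set because of the vanishing Jacobian hypothesis. Fix a point $(x_0,y_0)\in\Omega$. Since $|\nabla f_2|>0$ there, at least one of $\partial_x f_2(x_0,y_0)$ or $\partial_y f_2(x_0,y_0)$ is nonzero; without loss of generality I would assume $\partial_y f_2(x_0,y_0)\neq 0$, the other case being symmetric.

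Next, I would introduce the auxiliary map $\Phi(x,y)=(x,f_2(x,y))$. Its Jacobian at $(x_0,y_0)$ equals $\partial_y f_2(x_0,y_0)\neq 0$, so the inverse function theorem gives an open neighborhood $U\subset\Omega$ of $(x_0,y_0)$ that $\Phi$ maps $C^1$-diffeomorphically onto an open set $V$, with inverse of the form $\Phi^{-1}(u,v)=(u,h(u,v))$ for some $C^1$ function $h$ on $V$. The defining identity $f_2(u,h(u,v))=v$ and implicit differentiation in $u$ yield
\begin{equation*}
\partial_u h(u,v)=-\frac{\partial_x f_2}{\partial_y f_2}\bigg|_{(u,h(u,v))}.
\end{equation*}

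I then define $F(u,v):=f_1(u,h(u,v))$ on $V$ and compute
\begin{equation*}
\partial_u F=\partial_x f_1+\partial_y f_1\,\partial_u h=\frac{\partial_x f_1\,\partial_y f_2-\partial_y f_1\,\partial_x f_2}{\partial_y f_2},
\end{equation*}
evaluated at $(u,h(u,v))$. The numerator is precisely the Jacobian $\partial(f_1,f_2)/\partial(x,y)$, which vanishes identically on $\Omega$ by hypothesis, so $\partial_u F\equiv 0$ on $V$. Hence, on each connected slice of $V$ with $v$ fixed, $F$ is independent of $u$; after shrinking $U$ (and correspondingly $V$) to ensure the slices of $V$ are connected, $F(u,v)=g(v)$ for a single $C^1$ function $g$ defined on $f_2(U)$.

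Substituting back, $f_1(x,y)=F(x,f_2(x,y))=g(f_2(x,y))$ on $U$, which is the desired conclusion. The only subtle points I anticipate are the symmetric treatment of the case $\partial_x f_2(x_0,y_0)\neq 0$ (handled by swapping roles of $x$ and $y$ and defining $\Phi(x,y)=(f_2(x,y),y)$), and the bookkeeping of shrinking $U$ to make the preimage $V$ a product neighborhood so that ``$\partial_u F\equiv 0$'' genuinely forces dependence on $v$ alone; these are routine. The regularity $g\in C^1$ is immediate since $g(v)=F(u_0,v)$ for any fixed $u_0$ and $F\in C^1(V)$.
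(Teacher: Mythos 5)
Your proof is correct and follows essentially the same route as the paper: both locally parametrize the level sets of $f_2$ (you via the inverse function theorem applied to $\Phi(x,y)=(x,f_2(x,y))$, the paper via the implicit function theorem with the level value as a parameter) and then use the vanishing Jacobian to show that $\partial_u F\equiv 0$, so $f_1$ depends only on the value of $f_2$. The only bookkeeping you defer — shrinking $V$ to a product neighborhood so that constancy in $u$ on each slice yields a single $C^1$ function $g$ on $f_2(U)$ — corresponds to the paper's explicit construction of the open set $U$ from the implicit function $h$, and is indeed routine.
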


\begin{proof}
    For any fixed point $(x_0,y_0) \in \Omega$, there exists an open ball $V \subseteq \Omega$ containing $(x_0,y_0)$. 
    Note $V$ is simply-connected and $f_2 \in C^1(\Omega)$ satisfying $|\nabla f_2|>0$, the image $J:=f_2(V)$ is an open interval. Let
    \begin{equation*}
        F(x,y,c):=f_2(x,y)-c,  \quad (x,y,c)\in V \times J, 
    \end{equation*}
    and set $c_0=f_2(x_0,y_0)$. 
    Note $|\nabla f_2|>0$ in $\Omega$. Without loss of generality, we assume $\p_y f_2(x_0,y_0) \neq 0$. Then we have $F(x_0,y_0,c_0)=0$ and $\p_y F(x_0,y_0,c_0)=\p_y f_2(x_0,y_0) \neq 0$. By the implicit function theorem, there exist open intervals $I_{x_0}$, $I_{y_0}$, $I_{c_0}$ and a $C^1$ function $h: I_{x_0} \times I_{c_0} \to I_{y_0}$, such that 
    \begin{equation*}
        (x_0,y_0,c_0) \in I:=I_{x_0} \times I_{y_0} \times I_{c_0} \subseteq V \times J, 
    \end{equation*}
    \begin{equation*}
        F(x,h(x,c),c)=0, \quad (x,c) \in I_{x_0} \times I_{c_0}, \quad h(x_0, c_0)=y_0, 
    \end{equation*}
    and $\p_y F(x,y,c)= \partial_yf_2(x, y)\neq 0$ in $I$. 

    Define
    \begin{equation*}
        g(x,c):=f_1(x,h(x,c)), \quad (x,c) \in I_{x_0} \times I_{c_0}.
    \end{equation*}
    Then we have $g\in C^1(I_{x_0} \times I_{c_0})$ and
    \begin{equation*}
        \p_x g(x,c) =\p_x f_1 +\p_y f_1\cdot \p_x h 
        =\p_x f_1 +\p_y f_1\cdot \big(-\frac{\p_x f_2}{\p_y f_2} \big) = \frac{1}{\p_y f_2} \cdot \bigg| \frac{\p (f_1,f_2)}{\p (x,y)} \bigg| = 0.
    \end{equation*}
    So $g$ depends only on $c$ in $I_{x_0} \times I_{c_0}$, and then for any $(x,c)\in I_{x_0} \times I_{c_0}$, 
    \begin{equation*}
        f_1(x,h(x,c))=g(c)=g(f_2(x,h(x,c))). 
    \end{equation*}
    Note we have $\partial_yf_2(x, y)\neq 0$ in $I$, and then $\p_ch(x,c)=-\frac{\p_c F}{\p_y F}=\frac{1}{\p_y f_2}$ is strictly sign-definite in $I_{x_0}\times I_{c_0}$. So for each fixed $x\in I_{x_0}$,  $h$ is continuous and monotone in $c\in I_{c_0}$. Thus $h(x, I_{c_0})$ must be an interval, and $y_0$ is an interior point of $h(x_0, I_{c_0})$. Let $U=\cup_{x\in I_{x_0}}\{(x, y)\mid y=h(x, c), c\in I_{c_0}\}$. By the continuity of $h$, we have $U\subseteq I_{x_0} \times I_{y_0} \subseteq \Omega$ is an open set containing $(x_0,y_0)$. In particular, the above implies 
    \begin{equation*}
        f_1(x,y)=g(f_2(x,y)), \quad (x,y)\in U. 
    \end{equation*}
    This completes the proof of the lemma. 
\end{proof}

The second lemma concerns the geometric properties of the level sets of the stream function within $\Omega_{a,b,\theta_0}$. 

\begin{lemma}\label{lem:2}
    Let $0\leqslant a<b\leqslant \infty$, $\theta_0\in (0, 2\pi]$, $\Omega_{a,b,\theta_0}$ be defined by $(\ref{eq:domain})$, and  $\psi(r,\theta) \in C^1(\overline{\Omega_{a,b,\theta_0}} \setminus \{ \bm{0} \})$. Assume one of the following holds: 

    {\rm{(i)}} $|\p_\theta \psi|>0$ in $\overline{\Omega_{a,b,\theta_0}} \setminus \{\bm{0} \}$, and each of $\psi(r,0)$ and $\psi(r,\theta_0)$ is either a constant or strictly monotone for $r \in (a,b)$. 

    {\rm{(ii)}} $0<a<b<\infty$, $|\p_r \psi|>0$ in $\overline{\Omega_{a,b,\theta_0}}$, and each  of $\psi(a,\theta)$ and $\psi(b,\theta)$ is either a constant or strictly monotone for $\theta \in (0,\theta_0)$.
 
    Then for any $c\in \psi(\Omega_{a,b,\theta_0})$, the level set $L_c:=\{(r,\theta)\in \Omega_{a,b,\theta_0}, \psi(r,\theta)=c \}$ must be a $C^1$ simple curve. Moreover, if $|\p_\theta \psi|>0$ in $\Omega_{a,b,\theta_0}$, then $L_c$ can be represented as $\theta=\theta(r),\ r\in I_{c}$ for some open  interval $I_{c} \subseteq (a,b)$; 
    if $|\p_r \psi|>0$ in $\Omega_{a,b,\theta_0}$, then  $L_c$ can be represented as $r=r(\theta),\ \theta\in I_{c}'$ for some open interval $I_{c}' \subseteq (0,\theta_0)$. 
\end{lemma}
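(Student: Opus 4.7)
My plan is to handle case (i) directly and reduce case (ii) to it by interchanging the roles of $r$ and $\theta$. For case (i), since $\partial_\theta \psi$ is continuous and nonvanishing on the connected set $\overline{\Omega_{a,b,\theta_0}} \setminus \{\bm{0}\}$, I will first reduce to $\partial_\theta \psi > 0$ throughout. Then for each fixed $r \in (a,b)$, the map $\theta \mapsto \psi(r, \theta)$ is strictly increasing on $[0, \theta_0]$, so there is at most one $\theta \in (0, \theta_0)$ with $\psi(r, \theta) = c$, and such a $\theta$ exists if and only if $\psi(r, 0) < c < \psi(r, \theta_0)$.

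Accordingly, I would define
\begin{equation*}
    I_c := \{ r \in (a,b) : \psi(r, 0) < c < \psi(r, \theta_0) \},
\end{equation*}
so that $L_c = \{(r, \theta(r)) : r \in I_c\}$ for a uniquely determined map $\theta : I_c \to (0, \theta_0)$. Applying the implicit function theorem to $F(r, \theta) := \psi(r, \theta) - c$, with $\partial_\theta F > 0$, gives $\theta \in C^1(I_c)$. Hence $L_c$ is the graph of a $C^1$ function over $I_c$, which is in particular a $C^1$ simple curve of the asserted form $\theta = \theta(r)$.

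The last task is to show that $I_c$ is an open interval. By hypothesis, each of the boundary traces $r \mapsto \psi(r, 0)$ and $r \mapsto \psi(r, \theta_0)$ is either constant or strictly monotone on $(a,b)$, so each of the sets $\{r : \psi(r,0) < c\}$ and $\{r : \psi(r,\theta_0) > c\}$ is either empty, all of $(a,b)$, or an open half-interval of the form $(a, r^*)$ or $(r^*, b)$. Their intersection $I_c$ is therefore an open subinterval of $(a,b)$, and it is nonempty because $c \in \psi(\Omega_{a,b,\theta_0})$. I expect the main (admittedly minor) obstacle to be precisely this connectedness step: if the boundary traces were allowed to be non-monotone, $I_c$ could a priori break into disjoint components, so the monotonicity/constancy hypothesis is genuinely needed. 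Case (ii) then follows by the same argument with $r$ and $\theta$ interchanged; the assumption $0 < a < b < \infty$ ensures the trace argument on $\{r=a\}$ and $\{r=b\}$ is symmetric to the one above, with the candidate domain now $I_c' = \{\theta \in (0,\theta_0) : \psi(a,\theta)$ and $\psi(b,\theta)$ bracket $c\}$.
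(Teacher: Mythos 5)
Your argument is correct, and it reaches the conclusion by a noticeably more economical route than the paper. The paper's proof, after the same reduction to $\p_\theta\psi>0$, proceeds by an extensive case analysis: it splits according to whether each boundary trace is constant, increasing, or decreasing, and then further according to the four domain configurations ($0<a<b<\infty$, $a=0$, $b=\infty$, both), locating the extrema of $\psi$ on $\p\Omega_{a,b,\theta_0}$, decomposing the range $J$ into pieces $J_1,J_2,\dots$, and in each subcase identifying the endpoints $r_1,r_2$ of the interval over which $L_c$ is a graph. Your observation that, for fixed $r$, a solution $\theta\in(0,\theta_0)$ of $\psi(r,\theta)=c$ exists (and is unique) precisely when $\psi(r,0)<c<\psi(r,\theta_0)$, so that
\begin{equation*}
    I_c=\{r\in(a,b):\psi(r,0)<c\}\cap\{r\in(a,b):\psi(r,\theta_0)>c\},
\end{equation*}
collapses all of that: each factor is an open sub-interval (empty, all of $(a,b)$, or a half-interval) because the traces are continuous and constant or strictly monotone, their intersection is an open interval, nonemptiness follows from $c\in\psi(\Omega_{a,b,\theta_0})$, and the implicit function theorem (plus uniqueness of $\theta(r)$) gives the $C^1$ graph structure uniformly in all four domain configurations. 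Both proofs ultimately rest on the same ingredients---strict $\theta$-monotonicity, monotone boundary traces, and the implicit function theorem---but your packaging of the connectedness step via sublevel/superlevel sets of the traces is cleaner and avoids the paper's subcases (2a)--(2h) and (3a)--(3d) entirely; your treatment of (ii) by interchanging $r$ and $\theta$ matches the paper's.
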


\begin{proof}

    (i) Under condition (i), we may assume without loss of generality that $\p_\theta \psi>0$ in $\overline{\Omega_{a,b,\theta_0}} \setminus \{ \bm{0} \}$. Denote $J:=\psi(\Omega_{a,b,\theta_0})$.
    
    \medskip
    
    \noindent\textbf{Case 1.} $\psi(r,0)=c_1$ and $\psi(r,\theta_0)=c_2$ are both constants. 
    
    \medskip
    
    Obviously, $c_1<c_2$ and $J=(c_1,c_2)$ in this case. For any fixed $r\in (a,b)$, $\psi(r,\theta)$ is strictly increasing in $\theta \in [0,\theta_0]$. Consequently, for any $c\in J$ and fixed $r \in (a,b)$, there exists a unique $\theta_r \in (0,\theta_0)$ such that $\psi(r,\theta_r)=c$, and hence $(r,\theta_r) \in L_c$. Since $\p_\theta \psi>0$ in $\Omega_{a,b,\theta_0}$, the implicit function theorem ensures that $L_c$ is a $C^1$ curve, which can be parameterized as $\theta=\theta(r)$ for $r \in (a,b)$. 

    \medskip
    \noindent  \textbf{Case 2.} $\psi(r,0)$ and $\psi(r,\theta_0)$ are strictly monotonic with respect to $r \in (a,b)$. 

    \medskip
    Regarding the domain configurations, we examine the following four cases: $0<a<b<\infty$, $0=a<b<\infty$, $0<a<b=\infty$, and $0=a<b=\infty$. 
    Concerning the monotonicity properties of $\psi$, we restrict our analysis to two distinct scenarios: 
    $\psi(r,0)$ and $\psi(r,\theta_0)$ are strictly increasing in $r \in (a,b)$, or $\psi(r,0)$ is strictly increasing while $\psi(r,\theta_0)$ is strictly decreasing. Other scenarios can be treated similarly.

    \medskip

    \textbf{(2a)} $0<a<b<\infty$, $\psi(r,0)$ and $\psi(r,\theta_0)$ are strictly increasing in $r \in (a,b)$. 
    
    \medskip
    
    Since $\p_\theta \psi>0$ in $\overline{\Omega_{a,b,\theta_0}}$, it follows that $\psi(a,\theta)$ and $\psi(b,\theta)$ are strictly increasing in $\theta \in (0,\theta_0)$. Therefore, the minimum of $\psi$ in $\overline{\Omega_{a,b,\theta_0}}$ can only be attained on $\p \Omega^{BL}_{a,b,\theta_0}$, the maximum can only be attained on $\p \Omega^{TR}_{a,b,\theta_0}$, and $J \subset \mathbb{R}$ is a bounded open interval. 
   
    By the monotonicity of $\psi$ on the boundary, we know that for any $c \in J$, there exists a unique point $(r_1,\theta_1) \in \p \Omega^T_{a,b,\theta_0} \cup \p \Omega^L_{a,b,\theta_0}$, and a unique point $(r_2,\theta_2) \in \p \Omega^B_{a,b,\theta_0} \cup \p \Omega^R_{a,b,\theta_0}$ with $r_1<b$, $\theta_1>0$, $r_2>a$, $\theta_2<\theta_0$, such that $\psi(r_1,\theta_1)=\psi(r_2,\theta_2)=c$. 
    Note we must have $r_1<r_2$. Otherwise if $r_2 \leqslant r_1<b$, then $\theta_2=0$ and
    \begin{equation*}
        c=\psi(r_2,\theta_2) \leqslant \psi(r_1,\theta_2)<\psi(r_1,\theta_1)=c,
    \end{equation*}
    since $\psi(r,0)$ is strictly increasing in $r$ and  $\p_\theta \psi>0$. This leads to a contradiction. 
    
    We now claim that for any $(r,\theta) \in L_c$, the inequality $r_1<r<r_2$ holds. 
    First, consider the case $r_2=b$. By definition of $L_c$, it follows immediately that $r < r_2$. 
    Next, suppose $r_2<b$. Then $\theta_2=0$. For any $(r_*,\theta_*) \in \Omega_{a,b,\theta_0}$ with $r_* \geqslant r_2$, the monotonicity of $\psi$ on the boundary and in the interior yields 
    \begin{equation*}
        \psi(r_*,\theta_*)>\psi(r_*,0) \geqslant \psi(r_2,\theta_2)=c, 
    \end{equation*}
    which implies $(r_*,\theta_*) \notin L_c$. Consequently, for any $(r,\theta) \in L_c$, we must have $r<r_2$. A 
    similar argument shows that $r>r_1$ for any $(r,\theta) \in L_c$, thus establishing the claim.  
    
    Now, fix an arbitrary $r_0 \in (r_1,r_2)$. The boundary monotonicity of $\psi$ gives 
    \begin{equation*}
        \psi(r_0,0)<\psi(r_2,\theta_2)=c=\psi(r_1,\theta_1)<\psi(r_0,\theta_0).
    \end{equation*}
    Moreover, since $\psi$ is strictly increasing with respect to $\theta$ in $\Omega_{a,b,\theta_0}$,  
    there exists a unique $\theta_{r_0} \in (0,\theta_0)$ satisfying $\psi(r_0, \theta_{r_0})=c$, which implies that $(r_0, \theta_{r_0}) \in L_c$. Moreover, since $\p_\theta \psi>0$ in $\Omega_{a,b,\theta_0}$, by the implicit function theorem, we conclude that $L_c:=\{(r,\theta)\in \Omega_{a,b,\theta_0}, \psi(r,\theta)=c \}$ must be a $C^1$ simple curve, which can be parameterized as $\theta=\theta(r)$ for $r\in (r_1,r_2)$. 

    \medskip

    \textbf{(2b)}  $0<a<b<\infty$, $\psi(r,0)$ is strictly increasing and $\psi(r,\theta_0)$ is strictly decreasing in $r \in (a,b)$. 

    \medskip

    Since $\p_\theta \psi>0$ in $\overline{\Omega_{a,b,\theta_0}}$, it follows that $\psi(a,\theta)$ and $\psi(b,\theta)$ are strictly increasing in $\theta \in (0,\theta_0)$. Then  the minimum of $\psi$ in $\overline{\Omega_{a,b,\theta_0}}$ is attained on $\p \Omega^{BL}_{a,b,\theta_0}$, the maximum is attained on $\p \Omega^{TL}_{a,b,\theta_0}$, and $J \subset \mathbb{R}$ is a bounded open interval. 
    By the monotonicity of $\psi$, it follows that for any $c \in J$, there exist exactly two distinct points 
    \begin{equation*}
        (a,\theta_1) \in \p \Omega^L_{a,b,\theta_0}, \quad (r_2,\theta_2) \in \p \Omega^B_{a,b,\theta_0} \cup \p \Omega^R_{a,b,\theta_0} \cup \p \Omega^T_{a,b,\theta_0}
    \end{equation*}
    with $r_2>a$, such that $\psi(a,\theta_1)=\psi(r_2,\theta_2)=c$. 

    By similar arguments as in the proof of (2a), we have $a<r<r_2$ for any $(r,\theta) \in L_c$. 
    Moreover, for any $r_0 \in (a, r_2)$, there exists a unique $\theta_{r_0} \in (0,\theta_0)$ such that $\psi(r_0,\theta_{r_0})=c$. Then the implicit function theorem implies that $L_c$ must be a $C^1$ simple curve, which can be parameterized as $\theta=\theta(r)$, $r\in (a,r_2)$. 

    \medskip

    \textbf{(2c)} $0=a<b<\infty$, $\psi(r,0)$ and $\psi(r,\theta_0)$ are strictly increasing in $r \in (0,b)$.

    \medskip

    Since $\p_\theta \psi>0$ in $\overline{\Omega_{0,b,\theta_0}} \setminus \{\bm{0} \}$, it follows that $\psi(b,\theta)$ is strictly increasing in $\theta \in (0,\theta_0)$. For any fixed $r \in (0,b)$, we have $\psi(r,0)<\psi(r,\theta_0)$, which implies 
    \begin{equation*}
        \lim_{r \to 0^+} \psi(r,0) \leqslant \lim_{r \to 0^+} \psi(r,\theta_0). 
    \end{equation*}
    Therefore, the maximum of $\psi$ in $\overline{\Omega_{0,b,\theta_0}} \setminus\{ \bm{0} \}$ can only be attained on $\p \Omega^{TR}_{0,b,\theta_0}$, while the minimum cannot be achieved in $\overline{\Omega_{0,b,\theta_0}} \setminus\{ \bm{0} \}$. The range $J=(\displaystyle \lim_{r \to 0+} \psi(r,0),\psi(\p \Omega^{TR}_{0,b,\theta_0}))$ forms an open interval. 
    We decompose $J$ into two disjoint subsets:  
    \begin{equation*}
        J=J_1 \cup J_2,
    \end{equation*}
    where $J_1:=\psi(\p \Omega^{T}_{0,b,\theta_0} \setminus \p \Omega^{TR}_{0,b,\theta_0})$ is nonempty, and $J_2:= J \setminus J_1$. Note that $J_2$ is empty if $ \lim_{r \to 0^+} \psi(r,0)=\lim_{r \to 0^+} \psi(r,\theta_0)$. 

    For any $c \in J$, there exists a unique point $(r_2,\theta_2) \in \p \Omega^B_{0,b,\theta_0} \cup \p \Omega^R_{0,b,\theta_0}$ with $r_2>0$ and $\theta_2<\theta_0$ satisfying $\psi(r_2,\theta_2)=c$. 
    
    If $c \in J_1$, there exists a unique point $(r_1,\theta_0) \in \p \Omega^T_{0,b,\theta_0}$ with $0<r_1<b$, such that $\psi(r_1,\theta_0)=\psi(r_2,\theta_2)=c$. Following the same approach as in Case (2a) and (2b), we can show that $r_1<r_2$ and that $L_c$ forms a $C^1$ simple curve  parameterized by $\theta=\theta(r)$ for $r\in (r_1,r_2)$. 

    If $J_2 \neq \emptyset$ and $c \in J_2$, the monotonicity of $\psi$ guarantees that for any $(r,\theta) \in L_c$, we have $0<r<r_2$. Moreover, for each $r_0 \in (0,r_2)$, there exists a unique $\theta_{r_0} \in (0,\theta_0)$ such that $(r_0,\theta_{r_0}) \in L_c$. Consequently, by the implicit function theorem, $L_c$ is a $C^1$ curve that admits the parameterization $\theta=\theta(r)$ for $r \in (0, r_2)$.   

    \medskip

    \textbf{(2d)} $0=a<b<\infty$, $\psi(r,0)$ is strictly increasing and $\psi(r,\theta_0)$ is strictly decreasing in $r \in (0,b)$. 

    \medskip

    In this case, the extreme values of $\psi$ cannot be achieved in $\overline{\Omega_{0,b,\theta_0}} \setminus\{ \bm{0} \}$, $J=(\lim_{r \to 0+} \psi(r,0),$ $\lim_{r \to 0+} \psi(r,\theta_0))$ forms an open interval. 
    For each $c \in J$, there exists a unique point $(r_1,\theta_1) \in \p \Omega_{0,b,\theta_0} \setminus\{ \bm{0} \}$ satisfying $\psi(r_1,\theta_1)=c$. The monotonicity of $\psi$ implies that for any $(r,\theta) \in L_c$, we have $0<r<r_1$. For every $r_0 \in (0,r_1)$, there exists a unique $\theta_{r_0} \in (0,\theta_0)$ such that $(r_0,\theta_{r_0}) \in L_c$. Consequently, by the implicit function theorem, $L_c$ is a $C^1$ curve admitting the representation $\theta=\theta(r)$ for $r \in (0,r_1)$.   
    
    \medskip
    
    \textbf{(2e)} $0<a<b=\infty$, $\psi(r,0)$ and $\psi(r,\theta_0)$ are strictly increasing in $r \in (a,\infty)$.

    \medskip

    In this case, 
    \begin{equation*}
        \lim_{r \to +\infty} \psi(r,0) \leqslant \lim_{r \to +\infty} \psi(r,\theta_0). 
    \end{equation*} 
    The minimum of $\psi$ in $\overline{\Omega_{a,\infty,\theta_0}}$ can only be reached in $\p \Omega^{BL}_{a,\infty,\theta_0}$, while the maximum cannot be achieved in $\overline{\Omega_{a,\infty,\theta_0}}$. 
    Therefore, $J=(\psi(\p \Omega^{BL}_{a,\infty,\theta_0}), \displaystyle \lim_{r \to +\infty} \psi(r,\theta_0))$. 
     We decompose $J$ into two disjoint subsets:  
    \begin{equation*}
        J=J_1 \cup J_2,
    \end{equation*}
    where $J_1:=\psi(\p \Omega^{B}_{a,\infty,\theta_0} \setminus \p \Omega^{BL}_{a,\infty,\theta_0})$ is nonempty, $J_2:= J \setminus J_1$.

    For any $c \in J$, there exists a unique point $(r_1,\theta_1) \in \p \Omega^T_{a,\infty,\theta_0} \cup \p \Omega^L_{a,\infty,\theta_0}$ with $\theta_1>0$ satisfying $\psi(r_1,\theta_1)=c$. 
    
    If $c \in J_1$, there exists a unique point $(r_2,0) \in \p \Omega^B_{a,\infty,\theta_0}$ with $r_2>a$, such that $\psi(r_2,0)=c$. As in the proof of Case (2c), we can show that $r_1<r_2$ and that $L_c$ forms a $C^1$ simple curve parameterized by $\theta=\theta(r)$ for $ r\in (r_1,r_2)$. 

    If $J_2 \neq \emptyset$ and $c \in J_2$, then as in the proof of Case (2c), $L_c$ is a $C^1$ curve that can be represented as $\theta=\theta(r)$ for $r \in (r_1,+\infty)$.    

    \medskip

    \textbf{(2f)} $0<a<b=\infty$, $\psi(r,0)$ is strictly increasing and $\psi(r,\theta_0)$ is strictly decreasing in $r \in (a,\infty)$. 

    \medskip

    In this case, we have
    \begin{equation*}
         \lim_{r\to +\infty} \psi(r,0) \leqslant \lim_{r \to +\infty} \psi(r,\theta_0), 
    \end{equation*}
    and 
    \begin{equation*}
        \inf_{\Omega_{a,\infty,\theta_0}} \psi=\psi(\Omega^{BL}_{a,\infty,\theta_0}), \quad
        \sup_{\Omega_{a,\infty,\theta_0}} \psi=\psi(\Omega^{TL}_{a,\infty,\theta_0}). 
    \end{equation*}
    So $J=(\psi(\Omega^{BL}_{a,\infty,\theta_0}),\psi(\Omega^{TL}_{a,\infty,\theta_0}))$. 
    We decompose $J$ into three disjoint subsets: 
    \begin{equation*}
        J=J_1 \cup J_2 \cup J_3,
    \end{equation*}
    where $J_1:=\psi(\p\Omega_{a,\infty,\theta_0}^B \setminus \p\Omega_{a,\infty,\theta_0}^{BL})$, $J_2:=\psi(\p\Omega_{a,\infty,\theta_0}^T \setminus \p\Omega_{a,\infty,\theta_0}^{TL})$, $J_3:=J\setminus(J_1 \cup J_2)$ are all nonempty sets. 

    For any $c \in J$, there exists a unique point $(a,\theta_1) \in \p \Omega^L_{a,\infty,\theta_0}$ satisfying $\psi(a,\theta_1)=c$ and $0<\theta_1<\theta_0$.  
    
    If $c \in J_1$, there exists a unique point $(r_2,0) \in \p \Omega_{a,\infty,\theta_0}^B$ satisfying $\psi(r_2,0)=c$ and $r_2>a$. As in the previous discussion, we conclude that $L_c$ is a $C^1$ simple curve parameterized as $\theta=\theta(r)$ for $r\in (a,r_2)$. 

    If $c \in J_2$, there is a unique point $(r_2',\theta_0)\in \p \Omega_{a,\infty,\theta_0}^T$ satisfying $\psi(r_2',\theta_0)=c$ and $r_2'>a$. Analogously, $L_c$ is a $C^1$ simple curve, which can be represented as $\theta=\theta(r)$ for $r\in (a,r_2')$. 

    If $c \in J_3$, by the definition of $J_3$, for any fixed $r_0 \in (a,\infty)$, we have
    \begin{equation*}
        \psi(r_0,0)<\lim_{r\to +\infty} \psi(r,0) \leqslant c \leqslant \lim_{r \to +\infty} \psi(r,\theta_0)<\psi(r_0,\theta_0).
    \end{equation*}
    Therefore, for any fixed $r_0 \in (a,\infty)$, there exists a unique $\theta_{r_0} \in (0,\theta_0)$ such that $(r_0,\theta_{r_0}) \in L_c$. Consequently, $L_c$ forms a $C^1$ simple curve parameterized as $\theta=\theta(r)$ for $r\in (a,\infty)$. 

    \medskip

    \textbf{(2g)} $0=a<b=\infty$, $\psi(r,0)$ and $\psi(r,\theta_0)$ are strictly increasing in $r \in (0,\infty)$.

    \medskip

    In this case, 
    \begin{equation*}
        \lim_{r\to 0^+} \psi(r,0) \leqslant \lim_{r \to 0^+} \psi(r,\theta_0), \quad 
        \lim_{r\to +\infty} \psi(r,0) \leqslant \lim_{r \to +\infty} \psi(r,\theta_0),
    \end{equation*}
    and $\psi$ attains neither a maximum nor a minimum in $\overline{\Omega_{0,\infty,\theta_0}}\setminus \{ \bm{0} \}$. 
    Therefore, the interval $J=(\lim_{r\to 0^+} \psi(r,0),$ $\lim_{r \to +\infty} \psi(r,\theta_0))$. 
    
    We decompose $J$ into four disjoint subsets:  
    \begin{equation*}
        J=J_1 \cup J_2 \cup J_3 \cup J_4,
    \end{equation*}
    where $J_1:=\psi(\p\Omega_{0,\infty,\theta_0}^B) \cap \psi(\p\Omega_{0,\infty,\theta_0}^T)$, $J_2:=\psi(\p\Omega_{0,\infty,\theta_0}^B) \setminus J_1$, 
    $J_3:=\psi(\p\Omega_{0,\infty,\theta_0}^T) \setminus J_1$ and 
    $J_4:=J\setminus (J_1 \cup J_2 \cup J_3)$. 
    
    For any $c \in J$, there are four possible cases: 

    If $J_1 \neq \emptyset$ and $c \in J_1$, there exist exactly two distinct points $(r_1,\theta_0) \in \p \Omega_{0,\infty,\theta_0}^T$ and $(r_2,0) \in \p \Omega_{0,\infty,\theta_0}^B$ such that $\psi(r_1,\theta_0)=\psi(r_2,0)=c$. Following a similar argument as before, we deduce that $r_1<r_2$ and $L_c$ is a $C^1$ simple curve parameterized as $\theta=\theta(r)$ for $r\in (r_1,r_2)$. 

    If $J_2 \neq \emptyset$ and $c \in J_2$, there exists a unique point $(r_2',0) \in \p \Omega_{0,\infty,\theta_0}^B$ such that $\psi(r_2',0)=c$. As in the proof of Case (2c), $L_c$ is a $C^1$ simple curve, which can be represented as $\theta=\theta(r)$ for $r\in (0,r_2')$.

    If $J_3 \neq \emptyset$ and $c \in J_3$, we can find a unique point $(r_1',\theta_0) \in \p \Omega_{0,\infty,\theta_0}^T$ such that $\psi(r_1',\theta_0)=c$. As in the proof of Case (2e), $L_c$ is a $C^1$ simple curve parameterized as $\theta=\theta(r)$ for $r\in (r_1',+\infty)$.
    
    If $J_4 \neq \emptyset$ and $c \in J_4$, then as in the proof of Case (2f), we conclude that $L_c$ is a $C^1$ simple curve parameterized as $\theta=\theta(r)$ for $r\in (0,+\infty)$. 

    \medskip

    \textbf{(2h)} $0=a<b=\infty$, $\psi(r,0)$ is strictly increasing and $\psi(r,\theta_0)$ is strictly decreasing in $r \in (a,\infty)$. 

    \medskip

    In this case, 
    \begin{equation*}
        \sup_{r \in (0,\infty)} \psi(r,0)=\lim_{r \to \infty} \psi(r,0) \leqslant \lim_{r \to \infty} \psi(r,\theta_0)= \inf_{r\in (0,\infty)} \psi(r,\theta_0),
    \end{equation*}
    and $\psi$ attains neither a maximum nor a minimum in $\overline{\Omega_{0,\infty,\theta_0}}\setminus \{ \bm{0} \}$. 
    Therefore, the interval $J=(\lim_{r\to 0^+} \psi(r,0),$ $\lim_{r \to 0^+} \psi(r,\theta_0))$. 
    
    We decompose $J$ into three disjoint subsets:  
    \begin{equation*}
        J=J_1 \cup J_2 \cup J_3,
    \end{equation*}
    where $J_1:=\psi(\p\Omega_{0,\infty,\theta_0}^B)$, $J_2:=\psi(\p\Omega_{0,\infty,\theta_0}^T)$, $J_3:=J\setminus(J_1 \cup J_2)$ are all nonempty sets. 

    For any $c \in J$, there are three possible cases:
    
    If $c \in J_1$, there exists a unique point $(r_2,0) \in \p \Omega_{0,\infty,\theta_0}^B$ satisfying $\psi(r_2,0)=c$. Following the argument in Case (2c), $L_c$ is a $C^1$ simple curve parameterized as $\theta=\theta(r)$ for $r\in (0,r_2)$. 

    If $c \in J_2$, there is a unique point $(r_1,\theta_0) \in \p \Omega_{0,\infty,\theta_0}^T$ such that $\psi(r_1,\theta_0)=c$. Analogous to Case (2c), $L_c$ is a $C^1$ simple curve, which can be represented as $\theta=\theta(r)$ for $r\in (0,r_1)$. 

    If $c \in J_3$, the same argument as in Case (2g) shows that $L_c$ is a $C^1$ simple curve parameterized as $\theta=\theta(r)$ for $r\in (0,+\infty)$. 
  
    \medskip

    \noindent \textbf{Case 3.} One of $\psi(r,0)$ and $\psi(r,\theta_0)$ is constant, and the other is strictly monotone in $r \in (a,b)$.
   
    \medskip

    We restrict our analysis to the scenario where $\psi(r,0)$ is constant and $\psi(r,\theta_0)$ is strictly increasing in $r \in (a,b)$, within the following four domains: $0<a<b<\infty$, $0=a<b<\infty$, $0<a<b=\infty$, and $0=a<b=\infty$. Other scenarios can be treated similarly. For convenience, we denote $C:=\psi(r,0)$.

    \medskip

    \textbf{(3a)} $0<a<b<\infty$. 
    
    \medskip
    
    Since $\p_\theta \psi>0$ in $\overline{\Omega_{a,b,\theta_0}}$, it follows that $\psi(a,\theta)$ and $\psi(b,\theta)$ are strictly increasing in $\theta \in (0,\theta_0)$. Therefore, the minimum of $\psi$ in $\overline{\Omega_{a,b,\theta_0}}$ can only be attained on $\p \Omega^{B}_{a,b,\theta_0}$, the maximum can only be attained on $\p \Omega^{TR}_{a,b,\theta_0}$, and $J$ is a bounded open interval. 
   
    By the monotonicity of $\psi$ on the boundary, we know that for any $c \in J$, there exists a unique point $(r_1,\theta_1) \in \p \Omega^T_{a,b,\theta_0} \cup \p \Omega^L_{a,b,\theta_0}$, and a unique point $(b,\theta_2) \in \p \Omega^R_{a,b,\theta_0}$ with $r_1<b$, $\theta_1>0$, $\theta_2 \in (0,\theta_0)$, such that $\psi(r_1,\theta_1)=\psi(b, \theta_2)=c$. 
    By similar arguments as in Case (2a), 
    the monotonicity of $\psi$ implies that for any $(r,\theta) \in L_c$, it holds that $r_1<r<b$, and also for any $r_0 \in (r_1,b)$, there exists a unique $\theta_{r_0} \in (0,\theta_0)$ such that $\psi(r_0,\theta_{r_0})=c$. Then the implicit function theorem implies that $L_c$ must be a $C^1$ simple curve, which can be parameterized as $\theta=\theta(r)$, $r\in (r_1,b)$. 

    \medskip

    \textbf{(3b)} $0=a<b<\infty$.

    \medskip

    Since $\p_\theta \psi>0$ in $\overline{\Omega_{0,b,\theta_0}} \setminus \{\bm{0} \}$, it follows that $\psi(b,\theta)$ is strictly increasing in $\theta \in (0,\theta_0)$, and 
    \begin{equation*}
        \lim_{r \to 0^+} \psi(r,\theta_0) \geqslant C. 
    \end{equation*}
    Therefore, the maximum of $\psi$ in $\overline{\Omega_{0,b,\theta_0}} \setminus\{ \bm{0} \}$ can only be attained on $\p \Omega^{TR}_{0,b,\theta_0}$, while the minimum can only be achieved on $\p \Omega^{B}_{0,b,\theta_0}$. The range $J=(C,\psi(\p \Omega^{TR}_{0,b,\theta_0}))$ forms an open interval. 
    
    We decompose $J$ into two disjoint subsets:  
    \begin{equation*}
        J=J_1 \cup J_2,
    \end{equation*}
    where $J_1:=\psi(\p \Omega^{T}_{0,b,\theta_0} \setminus \p \Omega^{TR}_{0,b,\theta_0})$ is nonempty, and $J_2:= J \setminus J_1$. Note that $J_2$ is empty if $\lim_{r \to 0^+} \psi(r,\theta_0)=C$. 

    For any $c \in J$, there exists a unique point $(b,\theta_2) \in \p \Omega^R_{0,b,\theta_0}$ with $0<\theta_2<\theta_0$ satisfying $\psi(b,\theta_2)=c$. 
    
    If $c \in J_1$, there exists a unique point $(r_1,\theta_0) \in \p \Omega^T_{0,b,\theta_0}$ with $0<r_1<b$, such that $\psi(r_1,\theta_0)=\psi(b,\theta_2)=c$. Analogously, we can show that $L_c$ forms a $C^1$ simple curve  parameterized by $\theta=\theta(r)$ for $r\in (r_1,b)$. 

    If $J_2 \neq \emptyset$ and $c \in J_2$, then $\lim_{r \to 0^+} \psi(r,\theta_0) >C$. Similar as the proof of (2c), we deduce that $L_c$ is a $C^1$ curve that admits the parameterization $\theta=\theta(r)$ for $r \in (0, b)$.   
    
    \medskip
    
    \textbf{(3c)} $0<a<b=\infty$.

    \medskip

    In this case, the minimum of $\psi$ in $\overline{\Omega_{a,\infty,\theta_0}}$ can only be reached on $\p \Omega^{B}_{a,\infty,\theta_0}$, and the maximum cannot be achieved in $\overline{\Omega_{a,\infty,\theta_0}}$, which means that $J=(C, \lim_{r \to +\infty} \psi(r,\theta_0))$. 
    
    For any $c \in J$, there exists a unique point $(r_1,\theta_1) \in \p \Omega^T_{a,\infty,\theta_0} \cup \p \Omega^L_{a,\infty,\theta_0}$ with $\theta_1>0$ satisfying $\psi(r_1,\theta_1)=c$. Similar as the proof of (2f), we conclude that $L_c$ forms a $C^1$ simple curve parameterized by $\theta=\theta(r)$ for $ r\in (r_1,+\infty)$. 

    \medskip

    \textbf{(3d)} $0=a<b=\infty$.

    \medskip

    In this case, $\lim_{r \to 0^+} \psi(r,\theta_0) \geqslant C$, and the maximum of $\psi$ in $\overline{\Omega_{0,\infty,\theta_0}} \setminus\{ \bm{0} \}$ cannot be attained in $\overline{\Omega_{0,\infty,\theta_0}} \setminus \{\bm{0} \}$, while the minimum can only be achieved on $\p \Omega^{B}_{0,\infty,\theta_0}$. 
    Therefore, the interval $J=(C,\lim_{r \to +\infty} \psi(r,\theta_0))$. 
    
    We decompose $J$ into two disjoint subsets:  
    \begin{equation*}
        J=J_1 \cup J_2,
    \end{equation*}
    where $J_1:= \psi(\p\Omega_{0,\infty,\theta_0}^T)$ is nonempty, $J_2:=J\setminus J_1$. Note that $J_2$ is empty if $\lim_{r \to 0^+} \psi(r,\theta_0)=C$.
    
    For any $c \in J$, there are two possible cases: 

    If $c \in J_1$, there exist a unique point $(r_1,\theta_0) \in \p \Omega_{0,\infty,\theta_0}^T$  such that $\psi(r_1,\theta_0)=c$. Following a similar argument as before, we deduce that $L_c$ is a $C^1$ simple curve parameterized as $\theta=\theta(r)$ for $r\in (r_1,+\infty)$. 

    If $J_2 \neq \emptyset$ and $c \in J_2$, then $\lim_{r \to 0^+} \psi(r,\theta_0) >C$. Similar as the proof of (2g), we conclude that $L_c$ is a $C^1$ simple curve, which can be represented as $\theta=\theta(r)$ for $r\in (0,+\infty)$.

    \medskip

    \noindent (ii) By symmetry, the conclusion follows from part (i) under the condition $0<a<b<\infty$ upon interchanging $r$ and $\theta$.   This completes the proof of Lemma \ref{lem:2}. 
\end{proof}

Lemma \ref{lem:2} is parallel to Lemma 2.5 in \cite{HamelNadirashvili2017}, Lemma 2.3 in \cite{HamelNadirashvili2019} and Lemma 2.6 in \cite{HamelNadirashvili2023}. 
In fact, the structure of level sets can still be characterized for functions with lower regularity under certain conditions. See, for instance, \cite{BourgainKorobkovKristensen, Elekes} and the references therein. 

The third lemma addresses the functional dependent property between the stream function $\psi$ and its Laplacian. Specifically, we prove that $\psi$ satisfies an elliptic equation.

\begin{lemma}{\label{lem:3}}
    Let $0\leqslant a<b\leqslant \infty$, $\theta_0\in (0, 2\pi]$, $\Omega_{a,b,\theta_0}$ be defined by $(\ref{eq:domain})$. Let $\vu=u_\theta \bm{e_\theta}+u_r \bm{e_r} \in C^2(\overline{\Omega_{a,b,\theta_0}} \setminus \{\bm{0} \} )$ be a solution of \eqref{eq:euler}, and $\psi$ be a stream function of $\vu$ satisfying one of  the conditions (i) and (ii) in Lemma \ref{lem:2}. Then there exists a $C^1$ scalar function $g: \psi(\Omega_{a,b,\theta_0}) \to \mathbb{R}$, such that 
    \begin{equation*}
        \Delta \psi= g(\psi) \quad \text{in}\ \Omega_{a,b,\theta_0}. 
    \end{equation*}
\end{lemma}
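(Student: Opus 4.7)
The plan is to exploit the vorticity transport identity for stationary 2D Euler flows, which says that the scalar vorticity $\omega := -\Delta\psi$ is conserved along streamlines. Concretely, taking the scalar curl of the momentum equation in \eqref{eq:euler} and using $\operatorname{div}\vu = 0$ yields
\begin{equation*}
    \vu \cdot \nabla \omega = 0 \quad \text{in } \Omega_{a,b,\theta_0}.
\end{equation*}
Writing $\vu = \nabla^{\bot}\psi$ in Cartesian coordinates, this is equivalent, up to a sign, to the pointwise vanishing of the Jacobian
\begin{equation*}
    \left|\frac{\p(\omega,\psi)}{\p(x,y)}\right| \equiv 0 \quad \text{in } \Omega_{a,b,\theta_0}.
\end{equation*}

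Under either hypothesis (i) or (ii) of Lemma \ref{lem:2}, the relations \eqref{thm0:eq0} give $|\nabla\psi|^2 = u_\theta^2 + u_r^2 > 0$ throughout $\Omega_{a,b,\theta_0}$, since $|\p_\theta\psi|>0$ forces $|u_r|>0$ and $|\p_r\psi|>0$ forces $|u_\theta|>0$. Since $\vu \in C^2$ implies $\omega, \psi \in C^1$, Lemma \ref{lem:1} applies with $f_1 = \omega$ and $f_2 = \psi$: around each point $(r_*,\theta_*)\in\Omega_{a,b,\theta_0}$ there exist an open neighborhood $U$ and a $C^1$ function $g_U:\psi(U)\to\mathbb{R}$ such that $\omega = g_U(\psi)$ on $U$.

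It remains to promote this local representation to a global one. For this I would invoke Lemma \ref{lem:2}: for every $c\in J := \psi(\Omega_{a,b,\theta_0})$, the level set $L_c$ is a connected $C^1$ simple curve inside $\Omega_{a,b,\theta_0}$. Along $L_c$, each local identity of the form $\omega|_U = g_U(c)$ shows that $\omega$ is locally constant, and the connectedness of $L_c$ forces $\omega$ to be globally constant along the entire curve. Defining $g(c)$ to be this common value produces a single function $g:J\to\mathbb{R}$ which agrees with each $g_U$ on $\psi(U)$, and is therefore $C^1$ on $J$. Since $\omega = -\Delta\psi$, absorbing the minus sign into $g$ gives $\Delta\psi = g(\psi)$ in $\Omega_{a,b,\theta_0}$, as required.

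The principal subtlety will be precisely the passage from the local to the global statement: a priori two distinct points with the same value of $\psi$ might produce different functions $g_U$, but the structural information from Lemma \ref{lem:2} -- that every level set of $\psi$ is a single connected curve -- rules this out. The vorticity transport identity and the local application of Lemma \ref{lem:1} are otherwise routine and should not present real difficulty.
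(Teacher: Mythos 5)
Your proposal is correct and follows essentially the same route as the paper: the vorticity transport identity is exactly the paper's vanishing Jacobian $\left|\frac{\p(\Delta\psi,\psi)}{\p(r,\theta)}\right|\equiv 0$ obtained by cross-differentiating the momentum equations, Lemma \ref{lem:1} gives the local relation $\Delta\psi=g_U(\psi)$, and the connectedness of the level curves from Lemma \ref{lem:2} is what glues these into a single $C^1$ function $g$. The only cosmetic difference is ordering: the paper first proves $\Delta\psi$ is constant on each level curve via the parametrization $\theta=\theta(r)$ and then uses this for well-definedness, while you deduce the constancy from the local relations plus connectedness, which amounts to the same argument.
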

\begin{proof}
    Denote $J:=\psi(\Omega_{a,b,\theta_0})$. Since $\vu \in C^2(\overline{\Omega_{a,b,\theta_0}} \setminus \{\bm{0} \})$ is a solution of \eqref{eq:euler}, we have 
    \begin{equation*}
    \left\{
    \begin{aligned}
        & u_r \frac{\p u_r}{\p r}+\frac{u_\theta}{r} \frac{\p u_r}{\p \theta}-\frac{u_\theta^2}{r}+\frac{\p P}{\p r} =0, \\
        &  u_r \frac{\p u_\theta}{\p r}+\frac{u_\theta}{r} \frac{\p u_\theta}{\p \theta}+\frac{u_\theta u_r}{r}+\frac{1}{r} \frac{\p P}{\p \theta}=0. 
    \end{aligned}
    \right.
    \end{equation*}
    Substitute $u_\theta=\frac{\p \psi}{\p r}$ and $u_r=-\frac{1}{r} \frac{\p \psi}{\p \theta}$ in the above, we have 
    \begin{equation}\label{eq:3.3}
        \begin{cases}
        \begin{aligned}
            &\frac{1}{r^2} \frac{\p \psi}{\p \theta} \frac{\p^2 \psi}{\p r \p \theta}-\frac{1}{r^2}  \frac{\p \psi}{\p r} \frac{\p^2 \psi}{\p \theta^2}-\frac{1}{r^3} \bigg(\frac{\p \psi}{\p \theta} \bigg)^2 -\frac{1}{r} \bigg(\frac{\p \psi}{\p r} \bigg)^2+\frac{\p P}{\p r} =0,  \\
            &\frac{1}{r} \frac{\p \psi}{\p r} \frac{\p^2 \psi}{\p r \p \theta}-\frac{1}{r} \frac{\p \psi}{\p \theta} \frac{\p^2 \psi}{\p r^2}-\frac{1}{r^2} \frac{\p \psi}{\p r} \frac{\p \psi}{\p \theta}+\frac{1}{r} \frac{\p P}{\p \theta} =0.
        \end{aligned}
        \end{cases}
    \end{equation}
    Multiply the second line of \eqref{eq:3.3} by $r$ and differentiate with respect to $r$, then minus the derivative with respect to $\theta$ of the first line of \eqref{eq:3.3}, we obtain the Jacobian determinant
    \begin{equation}\label{eq:3_J}
        \bigg| \frac{\p (\Delta \psi,\psi)}{\p (r,\theta)} \bigg| \equiv 0, \quad (r,\theta) \in \Omega_{a,b,\theta_0}. 
    \end{equation}

    Assume $\psi$ satisfies one of the conditions in Lemma \ref{lem:2}. We only prove the lemma under condition (i) in Lemma \ref{lem:2}. The proof under condition (ii) is similar.
    
    Assume (i) in Lemma \ref{lem:2} holds, we have $|\p_\theta \psi|>0$ in $\Omega_{a,b,\theta_0}$.
    Then by Lemma \ref{lem:2}, the level set $L_c$ for $c \in J$ is a $C^1$ simple curve, which is connected and can be parametrized as $(r,\theta(r))$ for $r$ belongs to an open interval $I_c$. Define
    \begin{equation*}
        F(r)=\Delta \psi(r,\theta(r)),\quad r\in I_{c}.
    \end{equation*}
    Since $\psi(r, \theta(r))=c$ on $I_c$, we have $\partial_r\psi+\partial_{\theta}\psi\cdot\theta'(r)=0$ and  thus $\theta'(r)=-\frac{\partial_r\psi}{\partial_{\theta}\psi}$. By (\ref{eq:3_J}), we have
    \begin{equation*}
        F'(r)=\p_r \Delta\psi+\p_\theta \Delta\psi \cdot \theta'(r)=\p_r \Delta\psi+\p_\theta \Delta\psi \cdot \bigg( -\frac{\p_r \psi}{\p_\theta \psi} \bigg) \equiv 0.
    \end{equation*}
    So $\Delta \psi$ is a constant along $L_c$.
    
    By assumption, we have $|\nabla\psi|\geqslant |\frac{1}{r}\partial_{\theta}\psi|>0$ in $\Omega_{a, b, \theta_0}$. By (\ref{eq:3_J}) and this, using Lemma \ref{lem:1}, for any $\bm{p}\in \Omega_{a, b, \theta_0}$, there exists an open neighborhood $U_{\bm{p}}\subset\Omega_{a, b, \theta_0} $ of $\bm{p}$ and a function $g_{\bm{p}}\in C^1(\psi(U_{\bm{p}}), \mathbb{R})$, such that 
    \[
        \Delta\psi=g_{\bm{p}}(\psi), \quad (r, \theta)\in U_{\bm{p}}. 
    \]
    Since $|\partial_{\theta}\psi|>0$, $\psi(r, \theta)$ is monotone in $\theta$ 
    for each fixed $r$. So for each $r$, $\psi(U_{\bm{p}}\cap\{|x|=r\})$ is a nonempty open interval, and thus $\psi(U_{\bm{p}})$ is a nonempty open interval. 
    Then for any $c\in J$ and $\bm{p}, \bm{q} \in L_c$,  the intersection $\psi(U_{\bm{p}}) \cap \psi(U_{\bm{q}})$ is a non-empty open interval. For any $c' \in \psi(U_{\bm{p}}) \cap \psi(U_{\bm{q}})$, we have $\Delta \psi$ is constant along $L_{c'}$. It follows that $g_{\bm{p}}(c')=g_{\bm{q}}(c')$. Therefore, $g_{\bm{p}}\equiv g_{\bm{q}}$ on $\psi(U_{\bm{p}}) \cap \psi(U_{\bm{q}})$. Now for $c\in J$, we define 
    \[
        g(c):=g_{\bm{p}}(c), \quad \bm{p}\in L_c.
    \]
    Then $g(c)$ is well-defined for each $c$. Moreover, for any fixed $p\in L_c$,  there exists some $\epsilon>0$, such that $(c-\epsilon, c+\epsilon)\subset \psi(U_{\bm{p}})$. Note $g$ is $C^1$ in $\psi(U_{\bm{p}})$, we have $g\in C^1(c-\epsilon, c+\epsilon)$. So $g\in C^1(J)$. 
    The proof of the lemma is finished. 
\end{proof}

\begin{remark}
    The above lemma addresses the functional dependence of $\psi$ and $\Delta \psi$ in the interior domain. 
    By the continuity of $\psi$ and $\Delta \psi$ up to the boundary, the function $g$ can be extended to $I:=\psi(\overline{\Omega_{a,b,\theta_0}}\setminus \{\bm{0} \})$ in such a way that $g \in C^1(J)\cap C(I)$ and $\Delta \psi= g(\psi)\ {\rm{ in }}\ \overline{\Omega_{a,b,\theta_0}}\setminus \{\bm{0} \}$. 
\end{remark}

Lemma \ref{lem:3} can be viewed as a parallel result, under some different assumptions, as Lemma 2.1 in \cite{ConstantinDrivasGinsberg2021}, Lemma 2.8 in \cite{HamelNadirashvili2019} and Lemma 2.8 in \cite{HamelNadirashvili2023}, when the domain is an annulus, a half-space, a strip or the entire space. 

Next, we establish a Liouville-type result for some elliptic equation. 
Let $L=a^{ij}\p_{ij}+b^i\p_i+C$ denote a uniformly elliptic operator with constant coefficients $a^{ij}, b^i,C$, $i,j=1,2$, such that 
\[
    a^{ij}\xi_i\xi_j\geqslant \lambda |\xi|^2, \quad \forall\xi\in \mathbb{R}^2,
\]
for some $\lambda>0$.
\begin{lemma}\label{lem:4}
    Let $-\infty \leqslant x_1<x_2 \leqslant +\infty$, $-\infty<y_1<y_2<+\infty$, $\Omega=(x_1,x_2) \times (y_1,y_2)\subset \mathbb{R}^2$, $L=a^{ij}\p_{ij}+b^i\p_i+C$ be a uniformly elliptic operator with constant coefficients $a^{ij}, b^i,C$, $i,j=1,2$. Let $c_1\ne c_2$, $F(y) \in C^1([y_1,y_2])$ and $g: \overline{Range(\psi)} \to \mathbb{R}$ be a Lipschitz function, satisfying that either $F(y) \equiv {\rm{ const }}$ or $F'(y)\geqslant 0$ and $(c_2-c_1)g(y)\leqslant 0$. Assume $\psi \in C^2(\overline{\Omega})$ satisfies 
    \begin{equation}\label{eq4_1}
    \left\{
    \begin{aligned}
        &  L\psi=F(y)g(\psi), \quad (x,y)\in \Omega,\\
        & \psi(x,y_1)=c_1, \quad \psi(x,y_2)=c_2, \quad (c_2-c_1)\psi_y\geqslant 0, 
    \end{aligned}
    \right.
    \end{equation}
    and one of the following holds:

    {\rm (i)} $x_1=-\infty$, $x_2=+\infty$.

    {\rm (ii)} $-\infty<x_1<x_2<+\infty$, $\partial_x^j\psi(x_1, y)=\partial_x^j\psi(x_2, y)$ for $0\leqslant j\leqslant 2$. 

    {\rm (iii)} $-\infty=x_1< x_2<+\infty$, $b^1=b^2 = 0$, and $\partial_x\psi(x_2,y) =0$. 

    {\rm (iv)} $-\infty<x_1< x_2=+\infty$, $b^1=b^2=0$, and $\partial_x\psi(x_1,y) =0$.
 
    \noindent
    Then $\psi$ is independent of $x$, namely, $\psi(x,y)=\psi(y)$.
     
\end{lemma}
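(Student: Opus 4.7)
The proof proceeds via the sliding method. Without loss of generality, by replacing $\psi$ with $-\psi$ if necessary, assume $c_1 < c_2$; then the boundary condition $(c_2 - c_1)\psi_y \geqslant 0$ gives $\psi_y \geqslant 0$ in $\overline{\Omega}$ and hence $\psi \in [c_1, c_2]$ throughout $\overline{\Omega}$. The goal is to show that for every admissible $\tau$ the horizontal translate $\psi_\tau(x,y):=\psi(x+\tau,y)$ coincides with $\psi$, i.e.\ the difference $w_\tau:=\psi_\tau - \psi$ is identically zero. Since $g$ is Lipschitz on its domain, $w_\tau$ satisfies a linear elliptic equation with bounded coefficients
\begin{equation*}
    L w_\tau = F(y)\, h_\tau(x,y)\, w_\tau, \qquad |h_\tau|\leqslant \mathrm{Lip}(g),
\end{equation*}
with Dirichlet data $w_\tau = 0$ on $\{y=y_1\}\cup\{y=y_2\}$; once $w_\tau \equiv 0$ is established for both signs of $\tau$, $\partial_x\psi \equiv 0$ follows.

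I would first reduce cases (ii)--(iv) to essentially the full-strip setting of case (i). In case (ii), the matching conditions $\partial_x^j\psi(x_1,y) = \partial_x^j\psi(x_2,y)$ for $j = 0,1,2$ extend $\psi$ to a $C^2$ function on $\mathbb{R}\times(y_1,y_2)$ that is periodic in $x$. In cases (iii) and (iv), the Neumann condition $\partial_x \psi = 0$ at the finite endpoint, together with $b^1 = b^2 = 0$, permits a $C^2$ even reflection of $\psi$ across this endpoint (the vanishing of $\partial_x\psi$ at the reflection line and the absence of $\partial_x$ terms in the equation there make the extension smooth and compatible with the PDE), yielding an elliptic equation $\tilde L\psi = F(y)g(\psi)$ on the full strip $\mathbb{R}\times(y_1,y_2)$, where $\tilde L$ may differ from $L$ only by a sign flip of the cross term $a^{12}\partial_{xy}$ across the reflection line---a uniformly elliptic operator with piecewise-constant coefficients on which the sliding argument below still goes through. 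In all cases we may thus assume $\psi$ is defined on $\mathbb{R}\times(y_1,y_2)$.

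On this (possibly periodic) strip, the sliding argument is carried out by monotone comparison. The two alternatives of the structural hypothesis on $(F,g)$ both supply the sign control on the zeroth-order coefficient $C - F(y)h_\tau$ required for the maximum principle on unbounded domains. When $F \equiv \mathrm{const}$, the equation is autonomous in $x$ and a standard strip Liouville argument (in the spirit of Berestycki--Caffarelli--Nirenberg) applied on exhausting rectangles $(-R,R)\times(y_1,y_2)$ with $R\to\infty$ gives $w_\tau \equiv 0$. When $F'\geqslant 0$ and $(c_2-c_1)g \leqslant 0$, the latter implies $g(\psi)\leqslant 0$ on $[c_1,c_2]$; combined with $\psi_y \geqslant 0$ and the monotonicity of $F$, this produces the comparison barriers needed to run the same MP argument on expanding rectangles and to pass to the limit. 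The principal obstacle throughout is the unboundedness of $\Omega$ in the $x$-direction, which precludes a direct application of the weak maximum principle to $L - F(y)h_\tau$ without additional sign control on its zeroth-order coefficient---exactly what the structural dichotomy on $(F,g)$ is tailored to provide. A secondary technical point is verifying that the reflected/periodically-extended operator retains enough regularity for the comparison principle to apply, which is handled by the hypotheses $b^1 = b^2 = 0$ in (iii)--(iv) and by the $C^2$ matching in (ii).
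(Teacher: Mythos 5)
Your reduction of cases (ii)--(iv) to the full strip by periodic/even extension matches the paper's Step 1 (and you are even slightly more careful than the paper about the cross term $a^{12}\partial_{xy}$ under reflection, which the hypotheses $b^1=b^2=0$ and $\partial_x\psi=0$ handle since $\psi_{xy}$ vanishes on the reflection line). But the core of your argument has a genuine gap: you slide purely in the $x$-direction and claim that $w_\tau=\psi(x+\tau,y)-\psi(x,y)$ can be killed by a ``standard strip Liouville argument on exhausting rectangles $(-R,R)\times(y_1,y_2)$.'' This does not close. On the lateral sides $\{x=\pm R\}$ you have no control of $w_\tau$ whatsoever, and there is no smallness, decay, or narrow-domain mechanism in the unbounded $x$-direction to start or conclude a maximum-principle comparison; horizontal translations leave the strip invariant, so the sliding method cannot be initialized that way. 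Moreover, your stated reason that the hypotheses on $(F,g)$ ``supply the sign control on the zeroth-order coefficient $C-F(y)h_\tau$'' is incorrect: $h_\tau$ is a difference quotient of $g$ with no sign, $F$ has no sign, and in the paper's own applications $C=(1-\alpha)^2>0$, so no such sign control exists. The structural dichotomy ($F\equiv\mathrm{const}$, or $F'\geqslant 0$ with $(c_2-c_1)g\leqslant 0$) plays a different role entirely: it controls the sign of the \emph{inhomogeneous} term $[F(y+\tau\xi_2)-F(y)]\,g(\cdot)$ that appears only when one slides obliquely, i.e.\ with a nonzero $y$-component.

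That oblique sliding is exactly what the paper does and what your sketch is missing. The paper first proves (Step 2) that $\Psi$ is strictly increasing in $y$ with $c_1<\Psi<c_2$ in the interior, by applying the strong maximum principle for \emph{nonnegative} functions (which needs no sign on the zeroth-order coefficient) to $\Psi(x,y+\tfrac1k)-\Psi(x,y)\geqslant 0$. Then (Step 3) it slides in a direction $(\xi_1,\xi_2)$ with $\xi_2>0$: the overlap domain $D^\tau=\mathbb{R}\times(0,1-\tau\xi_2)$ becomes narrow as $\tau\xi_2\to 1$, so the narrow-domain maximum principle starts the sliding; the critical value $\tau_*$ is handled by a translation-compactness argument ($\Psi_k(x,y)=\Psi(x+x_k,y)\to\Phi$ in $C^2_{\mathrm{loc}}$, since the touching points may escape to $x$-infinity), followed by the strong maximum principle for the nonnegative difference of the limit. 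Only in Step 4 is the tilt sent to zero, $\xi_2\to 0^+$, yielding $\Psi(x+\tau\xi_1,y)\geqslant\Psi(x,y)$ for all $\xi_1\in\mathbb{R}$ and hence $x$-independence. None of these mechanisms (monotonicity in $y$, narrow-domain start, compactness at $\tau_*$, tilting to the horizontal) appears in your proposal, and without them the argument you outline cannot be completed.
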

\begin{proof}  
    Without loss of generality, assume $y_1=0$, $y_2=1$, and set $c_1=0$, $c_2=c>0$ by shifting $\psi \mapsto \psi-c_1$. If $c_2<c_1$, we could replace $\psi$ by $-\psi$ and $g(\psi)$ by $-g(-\psi)$. 

    By assumption, when $c_2>0=c_1$, we have $\psi_y \geqslant 0$, and either $F\equiv const$ or $F'\geqslant 0$ and $g\leqslant 0$.

    \medskip

    \noindent Step 1. We first extend $\psi$ into a function $\Psi$ define on $ D:=\mathbb{R} \times [0,1]$. 

    If $-\infty=x_1<x_2=+\infty$, we simply set $\Psi(x,y)=\psi(x,y)$ for all $(x,y) \in D$. 

    If $-\infty<x_1<x_2 <+\infty$, we define the periodic extension: 
    \begin{equation*}
        \Psi(x,y)=\psi(x-k(x_2-x_1),y), \quad x \in \mathbb{R},\ y\in [0,1].  
    \end{equation*}
    where $k=\lfloor \frac{x-x_1}{x_2-x_1} \rfloor$ denotes  the greatest integer less than or equal to $\frac{x-x_1}{x_2-x_1}$. In this case, $\Psi$ is periodic in $x$ with period $x_2-x_1$. 

    If $-\infty=x_1<x_2 <+\infty$, we define $\Psi$ to be the even extension of $\psi$: 
    \begin{equation*}
        \Psi(x,y)=
        \begin{cases}
            \psi(x,y),  &\quad x \leqslant x_2, \\
            \psi(2x_2-x,y), &\quad x> x_2.
        \end{cases}
    \end{equation*}

    If $-\infty<x_1< x_2=+\infty$, we apply an analogous even extension about $x=x_1$. 

    By assumptions (i)-(iv), we have $\Psi \in C^2(\overline{D})$ and  
    \begin{equation}\label{eq4_Psi}
    \left\{
    \begin{aligned}
        & L \Psi=F(y) g(\Psi), \quad   (x,y)\in D,\\
        & \Psi(x,0)=0, \quad \Psi(x,1)=c>0, \quad \Psi_y \geqslant 0\textrm{ in }D.
    \end{aligned}
    \right.
    \end{equation}
    Note that this implies $0 \leqslant \Psi \leqslant c$ in $D$. 

    \medskip

    \noindent Step 2. Now we prove that $\Psi$ is strictly increasing with respect to $y$ and thus $0<\Psi<c$ in $D$. 

    For any integer $k \geqslant 2$, define 
    \begin{equation*}
        \varphi(x,y):=\Psi(x,y+\frac{1}{k})-\Psi(x,y), \quad (x,y)\in \mathbb{R}\times [0,1-\frac{1}{k}]. 
    \end{equation*}
    For convenience, denote $D_k:=\mathbb{R}\times (0, 1-\frac{1}{k})$.  Since $\Psi_y \geqslant 0$ in $D$, we have  $\varphi\geqslant 0$ in $D_k$.
    For $(x, y)\in D_k$, let
    \begin{equation*}
        \tilde{c}(x,y):=
        \left\{
        \begin{aligned}
            &\frac{g(\Psi(x,y+\frac{1}{k}))-g(\Psi(x,y))}{\Psi(x,y+\frac{1}{k})-\Psi(x,y)}, && \text{ if } \Psi(x,y+\frac{1}{k})\neq \Psi(x,y), \\
            & 0,  && \text{ if } \Psi(x,y+\frac{1}{k})=\Psi(x,y).
        \end{aligned}
        \right.
    \end{equation*}
    Since $g$ is locally Lipschitz and $\Psi$ is bounded, it follows that the function $\tilde{c}(x,y)$ is in $ L^\infty(D_k)$. Let $\tilde{L}:=L-F(y)\tilde{c}(x,y)$. 
    By assumption, we have either $F=const$ or $F'\geqslant 0$ and $g\leqslant 0$.  By this and the first equation in (\ref{eq4_Psi}), it is easy to check that
    \begin{equation*}
        \tilde{L}\varphi=[F(y+\frac{1}{k})-F(y)]g(\Psi(x,y+\frac{1}{k})) \leqslant 0, \quad (x,y)\in D_k. 
    \end{equation*}
    By the maximum principle for  nonnegative functions (see, e.g. Theorem 2.10 in \cite{HanLin}), we have either $\varphi>0$ or $\varphi\equiv 0$ on $D_k$, i.e. either 
    \begin{equation}\label{lem4:eq10}
        \Psi(x,y+\frac{1}{k})=\Psi(x,y), \quad \forall (x,y) \in \mathbb{R} \times [0,1-\frac{1}{k}],
    \end{equation}
    or 
    \begin{equation}\label{lem4:eq20}
        \Psi(x,y+\frac{1}{k})>\Psi(x,y), \quad \forall (x,y) \in \mathbb{R} \times (0,1-\frac{1}{k}).
    \end{equation}
    
    However, the case (\ref{lem4:eq10}) would imply that $0=\Psi(x,0)=\Psi(x,\frac{1}{k})=\cdots=\Psi(x,1)=c$, which contradicts the assumption that $c>0$. Therefore, (\ref{lem4:eq20}) holds for all $k\geqslant 2$. Hence, by choosing $k$ sufficiently large, $\Psi$ is actually strictly increasing with respect to $y$ and satisfies $0<\Psi<c$ in $D$. 

    \medskip

    \noindent Step 3. Fix a vector $\bm{\xi}=(\xi_1,\xi_2) \in \mathbb{R}^2$ with $\xi_1 \in \mathbb{R}$ and $\xi_2>0$. 
    For any $\tau \in (0, 1/\xi_2)$, we define
    \begin{equation*}
        D^{\tau}:=\mathbb{R} \times (0,1-\tau \xi_2), 
    \end{equation*}
    and
    \begin{equation*}
        w^{\tau}(x,y):=\Psi(x+\tau \xi_1, y+\tau \xi_2)-\Psi(x,y),\quad (x,y)\in \overline{D^{\tau}}.  
    \end{equation*}

    \noindent
    {\it Claim:} $w^{\tau}>0$ in $\overline{D^{\tau}}$ for any $\tau \in (0,1/\xi_2)$.
    
    \noindent\emph{Proof of Claim}: Set
    \begin{equation*}
        \mathscr{A} :=\bigg\{ \tau \in \bigg(0,\frac{1}{\xi_2} \bigg): w^{{\tau}'}>0  \text{ in } \overline{D^{\tau'}}, 
        {\rm{for\ all}}\ \tau' \in \bigg(\tau,\frac{1}{\xi_2} \bigg)
        \bigg\}.
    \end{equation*}
    To prove the claim, we show $\mathscr{A}=(0,1/\xi_2)$. We first show that $\mathscr{A}$ is not empty. Note that $\Psi$ satisfies $\Psi(x,0)=0$, $\Psi(x,1)=c>0$ and $0<\Psi<c$ in $D$. For $(x, y)\in D^{\tau}$, let 
    \[
        \hat{c}(x,y):=
        \left\{
        \begin{aligned}
            &\frac{g(\Psi(x+\tau \xi_1,y+\tau \xi_2))-g(\Psi(x,y))}{\Psi(x+\tau \xi_1,y+\tau \xi_2)-\Psi(x,y)}, && \text{ if } \Psi(x+\tau \xi_1,y+\tau \xi_2)\neq \Psi(x,y), \\
            & 0,  && \text{ if } \Psi(x+\tau \xi_1,y+\tau \xi_2)=\Psi(x,y).
        \end{aligned}
        \right.
    \]
    Then $\hat{c}(x,y;\tau)$ is uniformly bounded  in $D^{\tau}$. Let $\hat{L}:=L-F(y)\hat{c}(x,y;\tau)$.
    By definition of $w^\tau$ and (\ref{eq4_Psi}), note we have either $F=const$ or $F'\geqslant 0$ and $g\leqslant 0$,   for any $\tau \in (0,1/\xi_2)$, it can be checked that
    \begin{equation*}
        \begin{cases}
        \hat{L} w^\tau= [F(y+\tau \xi_2)-F(y)]g(\Psi(x+\tau \xi_1,y+\tau \xi_2)) \leqslant 0, \quad (x,y) \in D^{\tau}, \\
        w^\tau|_{\p D^\tau}>0. 
        \end{cases}
    \end{equation*}
    Using the maximum principle for narrow domains (see, e.g. Proposition 2.13 in \cite{HanLin} and Corollary below Lemma H in \cite{GidasNiNirenberg1979}), 
    there exists a suitably small constant $0<\varepsilon <<1$ such that for all $\tau \in (1/\xi_2-\varepsilon,1/\xi_2)$, $D^{\tau}$ is a narrow domain and $w^{\tau}>0$ in $\overline{D^{\tau}}$. Therefore, $\mathscr{A}$ is a nonempty set, and $(1/\xi_2-\varepsilon,1/\xi_2)\subset \mathscr{A}$.
    
    Let $\tau_*:=\inf \mathscr{A}< 1/\xi_2$, we now prove that $\tau_*=0$ and thus the claim holds. 
    Suppose on the contrary that $\tau_*>0$. By definition of $\tau_*$ and the continuity of $\Psi$, we have
    \begin{equation}\label{eq:4.1}
        w^{\tau_*}(x,y) \geqslant 0 \text{ in } \overline{D^{\tau_*}},
    \end{equation}
    and there exist sequences $\tau_k \in (0,\tau_*]$ and $(x_k,y_k) \in \overline{D^{\tau_k}}$, such that 
    $\tau_k \nearrow \tau_*$ and 
    \begin{equation}\label{eq:4.2}
        w^{\tau_k} (x_k,y_k) \leqslant 0.
    \end{equation}
    Define 
    \begin{equation*}
        \Psi_k(x,y):= \Psi(x+x_k,y), \quad (x,y) \in \overline{D}.
    \end{equation*}
    Since $\Psi \in C^2(\overline{D})$ is bounded, $F(y) \in C^1([y_1,y_2])$, and $g$ is Lipschitz on the range of $\Psi$, the function $h(x,y):=F(y)g(\Psi(x,y))$ is bounded and locally Lipschitz in $\overline{D}$. By (\ref{eq4_Psi}), using standard elliptic estimates for $\Psi$ up to the boundary, we know that $\nabla \Psi$ is bounded in $\overline{D}$, and $\Psi\in C^{2,\alpha}_{\text{loc}}(\overline{D})$ for any $\alpha\in [0, 1)$. 
    Thus $\{ \Psi_k \}$ is uniformly bounded in $C^{2,\alpha}_{\text{loc}}(\overline{D})$ for any $\alpha \in [0,1)$. So there exists a subsequence, still denoted as $\{ \Psi_k \}$, and some limit function $\Phi \in C^2(\overline{D})$, such that $\Psi_k\to \Phi$ in $C^{2}_{\text{loc}}(\overline{D})$. Note $\Psi_k$ satisfies (\ref{eq4_Psi}). 
    Taking $k \to \infty$ in (\ref{eq4_Psi}) with $\Psi$ replaced by $\Psi_k$, we have 
    \begin{equation}\label{eq4_Phi}
        \left\{
        \begin{aligned}
            & L \Phi=F(y) g(\Phi), \quad  \text{ in } D, \\
            & \Phi(x,0)=0, \quad \Phi(x,1)=c, \quad \partial_y\Phi \geqslant 0\textrm{ in }D. 
        \end{aligned}
        \right.
    \end{equation}
    By similar arguments to that in Step 2 to $\Phi$ shows that $0<\Phi<c$ in $D$.

    Next, since $\{ y_k \}$ is bounded, there exists a subsequence, still denoted as $\{ y_k \}$, and some $y_*\in [0,1]$, such $y_k \to y_*$ as $k\to \infty$. Since $\overline{D^{\tau_k}} \to \overline{D^{\tau_*}}$, we conclude that $(0,y_*) \in \overline{D^{\tau_*}}$. By (\ref{eq:4.2}) we have 
    \[
        0\geqslant w^{\tau_k} (x_k,y_k)=\Psi(x_k+\tau_k\xi_1, y_k+\tau_k\xi_2)-\Psi(x_k, y_k)=\Psi_k(\tau_k\xi_1, y_k+\tau_k\xi_2)-\Psi_k(0, y_k). 
    \]
    Sending $k\to \infty$ in the above, we have 
    \[
        \Phi(\tau_*\xi_1, y_*+\tau_*\xi_2)-\Phi(0, y_*)\leqslant 0.
    \]
    On the other hand, by (\ref{eq:4.1}), for any $x\in \mathbb{R}$, $y\in \overline{D^{\tau_*}}$, we have 
    \begin{equation}\label{eq4_w_1}
        0\leqslant w^{\tau_*}(x+x_k, y)=\Psi(x+x_k+\tau_*\xi_1, y+\tau_*\xi_2)-\Psi(x+x_k, y)=\Psi_k(x+\tau_*\xi_1, y+\tau_*\xi_2)-\Psi_k(x,y).
    \end{equation}
    Let $x=0$, $y=y_*$ in the above, and send $k\to \infty$ in the above, we have 
    \[
        \Phi(\tau_*\xi_1, y_*+\tau_*\xi_2)-\Phi(0, y_*)\geqslant 0.
    \]
    So we have 
    \begin{equation}\label{eq:4.3}
        \Phi(\tau_*\xi_1, y_*+\tau_*\xi_2)-\Phi(0, y_*)=0. 
    \end{equation}

    We now prove that $(0,y_*)$ must lie in the interior $D^{\tau_*}$. Assume for contradiction that $(0,y_*) \in \p D^{\tau_*}$, then either $y_*=0$ or $y_*=1-\tau_* \xi_2$. Consider first the case $y_*=0$. From (\ref{eq4_Phi}), we have $\Phi(0, 0)=0$. Using (\ref{eq:4.3}) and the facts that $0<\tau_*<1/\xi_2$ and $0<\Phi<c \text{ in } D$, we derive the contradiction: 
    \begin{equation*}
        0=\Phi(0, 0)=\Phi(\tau_*\xi_1, \tau_* \xi_2)>0. 
    \end{equation*}
    The case $y_*=1-\tau_* \xi_2$ similarly leads to a contradiction. So $(0,y_*) \notin \p D^{\tau_*}$ and we must have $(0,y_*) \in D^{\tau_*}$.
    
    Next, set 
    \begin{equation*}
        W(x, y):= \Phi(x+\tau_*\xi_1,y+\tau_*\xi_2)-\Phi(x,y), \quad (x,y) \in \overline{D^{\tau_*}}. 
    \end{equation*}
    Let
    \[
        c'(x,y):=
        \left\{
        \begin{aligned}
            &\frac{g(\Phi(x+\tau_*\xi_1,y+\tau_*\xi_2))-g(\Phi(x,y))}{\Phi(x+\tau_*\xi_1,y+\tau_*\xi_2)-\Phi(x,y)}, && \text{ if } \Phi(x+\tau_*\xi_1,y+\tau_*\xi_2)\ne \Phi(x,y), \\
            & 0,  && \text{ if } \Phi(x+\tau_*\xi_1,y+\tau_*\xi_2)=\Phi(x,y).
        \end{aligned}
        \right.
    \]
    Then $c'(x, y)\in L^{\infty}(\overline{D^{\tau_*}})$. Let $L' :=L-F(y) c'(x,y)$. We know that $L'$ is uniformly elliptic. By assumption, we have either $F=const$ or $F'\geqslant 0$ and $g\leqslant 0$. By this and (\ref{eq4_Phi}), we have 
    \begin{equation*}
        L'W=[F(y+\tau_*\xi_2)-F(y)]g(\Phi_*) \leqslant 0 \quad 
        \text{ in } D^{\tau_*},
    \end{equation*}
    Send $k\to \infty$ in (\ref{eq4_w_1}), we have $W \geqslant 0 \text{ in } \overline{D^{\tau_*}}$. By (\ref{eq:4.3}), we have $W(0,y_*)=0$. Note $(0,y_*) \in D^{\tau_*}$ is an interior point. 
    By the maximum principle for non-negative functions, we have $W \equiv 0 \text{ in } \overline{D^{\tau_*}}$, including on $\p D^{\tau_*}$. 
    On the other hand, since $\Phi(x,0)=0$ and $0<\Phi<c$ in $D$, we have $W(x,0)>0$. This leads to a contradiction. Thus, we conclude that $\tau_*=0$.
         
    \medskip
    
    \noindent Step 4. We now prove that $\psi$ is independent of $x$. 
    
    \medskip
    
    By Step 3, for any fixed $\xi_2>0$, $\tau \in (0,1/\xi_2)$ and $\xi_1 \in \mathbb{R}$, we have 
    \begin{equation*}
        \Psi(x+\tau \xi_1, y+\tau\xi_2)-\Psi(x,y)= w^{\tau}(x, y)>0  \text{ in } \overline{D^{\tau}}.
    \end{equation*}
    Note for $0<\xi_2<1$, the interval $(0,1) \subseteq (0,1/\xi_2)$. Sending $\xi_2 \to 0^+$ in the above, we see that for all $\tau \in (0,1)$, $(x,y) \in D$ and $\xi_1 \in \mathbb{R}$, 
    \begin{equation*}
        \Psi(x+\tau \xi_1, y) \geqslant \Psi(x,y).
    \end{equation*}
    Since $\xi_1$ and $x$ are arbitrary, 
    this implies that $\Psi(x+\tau \xi_1, y)= \Psi(x,y)$ for any $\xi$ and thus $\Psi(x,y)$ is independent of $x$. 
    As a consequence, $\psi$ is independent of $x$. 
    This completes the proof of the lemma.
\end{proof}

\begin{remark}
    For the case $-\infty<x_1<x_2<+\infty$, the condition that $(c_2-c_1)\psi_y \geqslant 0$ can be replaced by an alternative assumption that $\psi$ strictly lies between $c_1$ and $c_2$ for any $(x,y) \in \overline{\Omega}$ with $y_1<y<y_2$. If this assumption is imposed, then Step 2 can be omitted, and in Step 3, it is possible to directly set $x_k \to x_*$ so that $\Psi_k(x,y)$ converges directly to $\Phi(x,y)=\Psi(x+x_*, y)$, which satisfies $\Phi(x,0)=0$, $\Phi(x,1)=c$ and $0<\Phi<c$ in $D$. Then, the same subsequent argument can be carried out. 
\end{remark}

\begin{remark}
    If $L=\Delta$ and $F \equiv 1$, the condition that $(c_2-c_1)\psi_y \geqslant 0$ can still be replaced by an alternative assumption that $\psi$ strictly lies between $c_1$ and $c_2$ for any $(x,y) \in \overline{\Omega}$ with $y_1<y<y_2$, for all four types of domains. The result for strip can refer to Theorem 1.4 and Remark 1.5 in \cite{HamelNadirashvili2017}, as well as Theorem 1.1 and Theorem 1.1' in \cite{BerestyckiCaffarelliNirenberg1997}. 
\end{remark}

\section{Proof of main results}

We present in this section the proofs of Theorems \ref{thm:1}-\ref{thm:5}. Let $\psi$ be a stream function for the velocity field $\vu=u_\theta \bm{e_\theta}+u_r \bm{e_r} \in C^2(\overline{\Omega_{a,b,\theta_0}} \setminus \{ \bm{0} \})$. Then $\psi \in C^3(\overline{\Omega_{a,b,\theta_0}} \setminus \{ \bm{0} \})$ and satisfies (\ref{thm0:eq0}). 

\begin{lemma}\label{lem1_1}
    Let $0<\theta_0\leqslant 2\pi$, $\alpha\in \mathbb{R}$, $\vu \in C^1(\overline{\Omega_{1,2,\theta_0}})$, ${\rm div}\, \vu = 0$, 
    satisfying (\ref{eq:BC:hom:-alpha:theta}) for some constants $c_1, c_2$, and 
    \begin{equation}\label{eq1_1_1}
        u_r (1, \theta)=2^{\alpha}u_r(2, \theta), \quad \theta\in [0, \theta_0].
    \end{equation}
    Then the stream function $\psi$ of $\vu$ satisfies the following on $\partial\Omega_{1, 2, \theta_0}$. 

    \noindent (i) If $\alpha=1$, then $c_1=c_2=:c$, and 
    \begin{equation}\label{eq1_1_2}
        \psi(r, \theta)=c\ln r-\int_0^{\theta}u_r(1, \varphi)d\varphi+A, \quad \textrm{ on }\partial\Omega_{1, 2, \theta_0},
    \end{equation}
    for some constant $A$.

    \noindent (ii) If $\alpha\ne 1$, then 
    \begin{equation}\label{eq1_1_3}
        \psi=r^{1-\alpha}( - \int_0^{\theta}u_r(1, \varphi)d\varphi+\frac{c_1}{1-\alpha}) + A, \quad \textrm{ on }\partial\Omega_{1, 2, \theta_0}, 
    \end{equation}
    for some constant $A$.
\end{lemma}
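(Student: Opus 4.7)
The plan is to reconstruct $\psi$ edge by edge on $\partial\Omega_{1,2,\theta_0}$ from the defining relations $u_\theta=\partial_r\psi$, $u_r=-r^{-1}\partial_\theta\psi$ (available since $\mathrm{div}\,\vu=0$), and then force consistency at the two right-hand corners using the relation \eqref{eq1_1_1}.

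First I would work on the bottom edge $\{\theta=0\}$, where $\partial_r\psi(r,0)=u_\theta(r,0)=c_1 r^{-\alpha}$. Integrating in $r$ yields
\[
\psi(r,0)=
\begin{cases}
c_1\ln r + A, & \alpha=1,\\[2pt]
\dfrac{c_1}{1-\alpha}\,r^{1-\alpha} + A, & \alpha\ne 1,
\end{cases}
\]
for some constant $A$. Next, on the left edge $\{r=1\}$, $\partial_\theta\psi(1,\theta)=-u_r(1,\theta)$, so
\[
\psi(1,\theta)= -\int_0^\theta u_r(1,\varphi)\,d\varphi + \psi(1,0).
\]
On the right edge $\{r=2\}$ I use the hypothesis \eqref{eq1_1_1}, i.e.\ $u_r(2,\theta)=2^{-\alpha}u_r(1,\theta)$, to rewrite $\partial_\theta\psi(2,\theta)=-2u_r(2,\theta)=-2^{1-\alpha}u_r(1,\theta)$, whence
\[
\psi(2,\theta)= -2^{1-\alpha}\!\int_0^\theta u_r(1,\varphi)\,d\varphi + \psi(2,0).
\]

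Finally, on the top edge $\{\theta=\theta_0\}$, $\partial_r\psi(r,\theta_0)=c_2 r^{-\alpha}$, so $\psi(r,\theta_0)$ must be of the form $c_2\ln r+B$ when $\alpha=1$, or $\tfrac{c_2}{1-\alpha}r^{1-\alpha}+B$ when $\alpha\ne 1$, for some constant $B$. Consistency at the corners $(1,\theta_0)$ and $(2,\theta_0)$ comes from equating the two expressions for $\psi$ there: one obtained by following the bottom edge and then the left/right edge, the other obtained from the top-edge formula.

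For case (i) with $\alpha=1$, evaluating the difference $\psi(2,\theta_0)-\psi(1,\theta_0)$ via the top edge gives $c_2\ln 2$, while the same difference computed via the bottom-then-vertical path gives $c_1\ln 2$ (the two integrals of $u_r(1,\cdot)$ cancel because $2^{1-\alpha}=1$). Hence $c_1=c_2=:c$, and the resulting formulas on all four edges collapse to the single expression \eqref{eq1_1_2}. For case (ii) with $\alpha\ne 1$, the factor $2^{1-\alpha}$ in the right-edge formula is precisely what one needs so that, after absorbing $A$, all four boundary expressions fit into the single ansatz \eqref{eq1_1_3}; the compatibility at $(2,\theta_0)$ automatically yields the scalar identity $\int_0^{\theta_0}u_r(1,\varphi)\,d\varphi=(c_1-c_2)/(1-\alpha)$, which is consistent with the formula.

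There is no real obstacle here: the lemma is essentially a bookkeeping exercise once one notices that the two edges $r=1$ and $r=2$ carry the same $\theta$-integrand up to the scaling factor $2^{1-\alpha}$ enforced by \eqref{eq1_1_1}. The only point that requires a moment's care is that in case (i) the cancellation $2^{1-\alpha}=1$ is what forces $c_1=c_2$, while in case (ii) the nontrivial scaling is exactly what makes the unified formula \eqref{eq1_1_3} possible.
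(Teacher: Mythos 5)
Your proposal is correct and follows essentially the same route as the paper: integrate $\partial_r\psi=u_\theta$ along $\theta=0,\theta_0$, integrate $\partial_\theta\psi=-ru_r$ along $r=1,2$ using \eqref{eq1_1_1} (which gives the factor $2^{1-\alpha}$ on the right edge), and equate the resulting expressions at the corners $(1,\theta_0)$ and $(2,\theta_0)$, which forces $c_1=c_2$ when $\alpha=1$ and yields $c_2=(1-\alpha)\bigl(-\int_0^{\theta_0}u_r(1,\varphi)\,d\varphi+\tfrac{c_1}{1-\alpha}\bigr)$ together with equal additive constants when $\alpha\ne1$. No gaps; the corner-matching bookkeeping is exactly the paper's argument.
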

\begin{proof}
    By (\ref{eq:BC:hom:-alpha:theta}) and (\ref{thm0:eq0}), we have 
    \begin{equation}\label{eq1_1_4}
    \partial_r\psi(r, 0)=u_\theta|_{\theta=0}=\frac{c_1}{r^\alpha}, \quad \partial_r\psi(r, \theta_0)=u_\theta|_{\theta=\theta_0}=\frac{c_2}{r^\alpha}.
    \end{equation}
    (i) If $\alpha=1$, then by the above, we have 
       \begin{equation}\label{eq1_1_5}
        \psi(r,0) =c_1 \ln r+A_1, \quad 
        \psi(r,\theta_0) =c_2 \ln r+A_2, \quad r\in [1,2],
    \end{equation}
    for some constants $A_1, A_2$. 
    By (\ref{thm0:eq0}), we also have $\partial_{\theta}\psi=-ru_r$. 
    By this and (\ref{eq1_1_1}) with $\alpha=1$, we have  $\partial_{\theta}\psi(1, \theta)=-u_r|_{r=1}=-2 u_r|_{r=2}=\partial_{\theta}\psi(2, \theta)$.  This and the above imply 
    \begin{equation}\label{eq1_1_6}
        \psi(1,\theta) 
        =h(\theta)+A_1,
        \quad \psi(2, \theta)=
        h(\theta)+c_1\ln 2+A_1, \quad \theta\in [0, \theta_0], 
    \end{equation}
    where $h(\theta):= -\int_0^\theta u_r(1,\varphi) \text{d} \varphi\in C^3([0,\theta_0])$.  
    Note (\ref{eq1_1_5}) and (\ref{eq1_1_6}) imply
    \begin{equation*}
        \begin{cases}
            \psi(1, \theta_0)=A_2=h(\theta_0)+A_1, \\
            \psi(2, \theta_0)=c_2 \ln 2+A_2=h(\theta_0)+c_1\ln 2+A_1. 
        \end{cases}
    \end{equation*}
    This implies $c_1=c_2=:c$. Thus we have 
    \begin{equation*}
        \psi(r,\theta)=c \ln r+h(\theta) + A_1, \quad \text{on } \p \Omega_{1,2,\theta_0}.
    \end{equation*}
    So (i) is proved.

    \noindent (ii) If $\alpha\ne 1$, by (\ref{eq1_1_4}), we have 
    \begin{equation}\label{eq1_1_7}
        \psi(r,0)=\frac{c_1}{1-\alpha} r^{1-\alpha}+A_1, \quad 
        \psi(r,\theta_0)=\frac{c_2}{1-\alpha} r^{1-\alpha}+A_2, \quad r \in [1,2]. 
    \end{equation}
    Note (\ref{thm0:eq0}) says $\partial_{\theta}\psi=-ru_r$.  By this and (\ref{eq1_1_1}), we have
    \begin{equation}\label{eq1_1_8}
        \psi(1,\theta)=h(\theta)+A_1, \quad \psi(2,\theta)=2^{1-\alpha} h(\theta)+A_1, \quad \theta \in [0,\theta_0],
    \end{equation}
    where $h(\theta):= -\int_0^\theta u_r(1,\varphi) \text{d} \varphi+\frac{c_1}{1-\alpha}\in C^3([0,\theta_0])$. 
    By (\ref{eq1_1_7}) and (\ref{eq1_1_8}), we have 
    \begin{equation*}
    \begin{cases}
        \psi(1, \theta_0)=\frac{c_2}{1-\alpha} +A_2=h(\theta_0)+A_1, \\
        \psi(2, \theta_0)=2^{1-\alpha}\frac{c_2}{1-\alpha} +A_2=2^{1-\alpha} h(\theta_0)+A_1. 
    \end{cases}
    \end{equation*}
    Since $\alpha\ne 1$, the above implies $c_2=(1-\alpha)h(\theta_0)$ and $A_2=A_1=:A$. Then we have
    \begin{equation*}
        \psi(r,\theta)=h(\theta) r^{1-\alpha} + A, \quad (r,\theta)\in \p \Omega_{1,2,\theta_0}. 
    \end{equation*}
    So (ii) is proved. 
\end{proof}

We will use the following basic facts. 
\begin{lemma}\label{lem1_2}
    Let $a < b$, $P > 0$, and set $J:=(a,b+P)$. Suppose $G\in C(J)$ satisfies $G(x+P)=G(x)$ for all $x\in (a,b)$, and $G(x)\equiv C$ on $x\in (a,a+P)$ or $x\in (b,b+P)$. Then $G\equiv C$ on $J$. 
\end{lemma}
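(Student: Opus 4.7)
The plan is a connectedness/sliding argument: the relation $G(x+P)=G(x)$ transports the value $C$ across a shift of $P$, so one can extend the set where $G\equiv C$ one ``period'' at a time. The two hypotheses are symmetric, so I only describe the case $G\equiv C$ on $(a,a+P)$; the other case will be handled by running the mirror argument going leftward from $(b,b+P)$.

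I would introduce the set $E := \{t\in (a,b+P] : G\equiv C \text{ on } (a,t)\}$. By hypothesis and continuity of $G$, $a+P\in E$, and continuity further shows that $E$ is closed under taking increasing limits; hence $s:=\sup E$ belongs to $E$ and $G\equiv C$ on $(a,s)$. The crux of the proof is to rule out $s<b+P$. Suppose toward contradiction that $s<b+P$. Since $s\geqslant a+P$ one has $s-P\geqslant a$, and since $s<b+P$ one has $s-P<b$; thus the interval $I:=(s-P,\min(s,b))$ is nonempty and contained in both $(a,b)$ (so the periodicity is valid at its points) and $(a,s)$ (so $G\equiv C$ there). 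For every $x\in I$, the periodicity gives $G(x+P)=G(x)=C$, which means $G\equiv C$ on $(s,\min(s+P,b+P))$. Combined with $G\equiv C$ on $(a,s)$ and continuity of $G$ at $s$, this produces some $\delta>0$ with $G\equiv C$ on $(a,s+\delta)$, contradicting $s=\sup E$. Therefore $s=b+P$, and $G\equiv C$ on $J$.

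The main (and quite mild) obstacle is that the periodicity identity is only assumed for $x\in (a,b)$: when extending past $s$ one must check that the source points $x=y-P$ actually lie in $(a,b)$, which is exactly what forces the appearance of $\min(s,b)$ in the definition of $I$ above. The companion case $G\equiv C$ on $(b,b+P)$ is handled by the symmetric set $E':=\{t\in [a,b+P) : G\equiv C \text{ on } (t,b+P)\}$, using the reverse reading $G(x)=G(x+P)$ for $x\in (a,b)$ to extend $E'$ leftward from $b$; the same sliding step shows $\inf E'=a$, so $G\equiv C$ on $J$ in this case as well.
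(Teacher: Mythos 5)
Your argument is correct. Note that the paper itself states this lemma as a ``basic fact'' and gives no proof, so there is no authorial argument to compare against; your continuous-induction (sliding) proof is a perfectly valid way to supply one. The two points that actually need care --- that the source interval for the shift must sit inside $(a,b)$, which is why $\min(s,b)$ appears, and that continuity of $G$ at $s$ is used to bridge the single point $s$ between $(a,s)$ and $(s,\min(s+P,b+P))$ --- are both handled explicitly, and the leftward mirror argument for the case $G\equiv C$ on $(b,b+P)$ goes through verbatim. A marginally shorter route is also available: for any $x\in(a+P,b+P)$ pick the integer $k\geqslant 1$ with $x-kP\in(a,a+P]$, observe that each intermediate point $x-jP$, $1\leqslant j\leqslant k$, lies in $(a,b)$ because $x-P<b$, iterate the identity $G(y+P)=G(y)$ finitely many times, and use continuity only to get $G(a+P)=C$; but this buys nothing essential over your version.
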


\begin{lemma}\label{lem1_3}
    Let $0\leqslant a < b$ or $a < b \leqslant 0$, $\lambda > 0$, $\beta\in\mathbb{R}$. Set $J:=(\min\{\lambda^{-1} a, a\}, \max\{b, \lambda^{-1} b\})$. Suppose $G\in C(J)$ satisfies $G(x) = \lambda^\beta G(\lambda^{-1} x)$ for all $x\in (a,b)$, and $G(x) = C |x|^{\beta}$ on $x\in ( \min \{ \lambda^{-1} b, b \}, \max \{ \lambda^{-1} b, b \} )$ when $b \not= 0$ or $x\in ( \min \{ \lambda^{-1} a, a \}, \max \{ \lambda^{-1} a, a \} )$ when $a \not= 0$. Then $G = C|x|^{\beta}$ on $J$. 
\end{lemma}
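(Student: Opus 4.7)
The plan is to subtract off the candidate solution and reduce to a rigidity statement for the difference. Set $H(x) := G(x) - C|x|^{\beta}$ on $J$. Since $J$ lies entirely in $[0, \infty)$ or in $(-\infty, 0]$ by assumption, the function $|x|^{\beta}$ is continuous on $J$ and satisfies $\lambda^{\beta} |\lambda^{-1} x|^{\beta} = |x|^{\beta}$, so $H \in C(J)$ inherits the same functional relation
\begin{equation*}
    H(x) = \lambda^{\beta} H(\lambda^{-1} x), \quad x \in (a, b),
\end{equation*}
and by hypothesis $H$ vanishes identically on the seed interval. The goal is therefore to show $H \equiv 0$ on all of $J$.

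The main step is to propagate the vanishing of $H$ throughout $J$ by iterating the scaling relation. I treat the representative case $0 \leqslant a < b$, $\lambda > 1$, with seed $S_0 = (\lambda^{-1} b, b)$; here $J = (\lambda^{-1} a, b)$. The key observation is that if $H \equiv 0$ on a subset $T \subseteq (a, b)$, then applying the functional relation at points of $T$ yields $H(\lambda^{-1} x) = \lambda^{-\beta} H(x) = 0$, so $H \equiv 0$ on $\lambda^{-1} T$. Starting from $S_0$, I would define $S_{k+1} := \lambda^{-1}(S_k \cap (a, b))$ and use induction together with continuity of $H$ at the interior joining points to show $H \equiv 0$ on $S_0 \cup S_1 \cup \cdots \cup S_k$ for every $k$. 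When $a > 0$, finitely many iterations push the left endpoint below $a$, so $H \equiv 0$ on $(a, b)$, after which a final application of the relation gives $H \equiv 0$ on $(\lambda^{-1} a, \lambda^{-1} b)$; together with $S_0$ this exhausts $J$. When $a = 0$, $\lambda^{-k} b \searrow 0$ and the countable union directly yields $H \equiv 0$ on $(0, b) = J$.

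The remaining cases reduce to this one by natural symmetries: $\lambda < 1$ is handled by the equivalent form $H(\lambda x) = \lambda^{\beta} H(x)$ valid on $(\lambda^{-1}a, \lambda^{-1}b)$, propagating from the seed at the right end of $J$; a seed centered at $a$ rather than $b$ (available when $a \neq 0$) is treated by the analogous iteration with the roles of $a$ and $b$ swapped; and the sign-flipped case $a < b \leqslant 0$ reduces to the previous situations via $x \mapsto -x$. The only genuine obstacle is the bookkeeping that, in each of the four sign/size configurations, the seed is placed precisely at the ``scaling-critical'' point $x=a$ or $x=b$ so that the first inductive step is nonvacuous, and so that the resulting chain of subintervals closes up monotonically to exhaust $J$; this is straightforward but must be verified case by case.
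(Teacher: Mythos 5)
The paper states Lemma \ref{lem1_3} (together with Lemma \ref{lem1_2}) as one of two ``basic facts'' and supplies no proof, so there is no argument of the authors to compare against; judged on its own, your proposal is correct and is the natural argument one would write down. The reduction to $H:=G-C|x|^{\beta}$ is legitimate because $J$ never contains $0$ (its left endpoint is $\geqslant 0$ in the case $0\leqslant a<b$, its right endpoint is $\leqslant 0$ in the case $a<b\leqslant 0$), so $|x|^{\beta}$ is continuous on $J$ and exactly invariant under the scaling $\lambda^{\beta}|\lambda^{-1}x|^{\beta}=|x|^{\beta}$; and the iteration $S_{k+1}=\lambda^{-1}(S_k\cap(a,b))$, with continuity of $H$ invoked at the junction points $\lambda^{-k}b$, does exhaust $J$ — finitely many steps when the endpoint being approached is nonzero, countably many when it is $0$ (the case $a=0$, resp. $b=0$). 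Your treatment of the other configurations ($\lambda<1$ via the rewritten relation $H(\lambda y)=\lambda^{\beta}H(y)$ on $(\lambda^{-1}a,\lambda^{-1}b)$, the seed at $a$ via upward propagation, and $a<b\leqslant 0$ via $x\mapsto -x$, which is compatible with $|x|^{\beta}$ being even) is also correct.

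Two small points are worth making explicit if you write this out in full. First, when $\lambda^{-1}b\leqslant a$ the intervals $(a,b)$ and $(\lambda^{-1}a,\lambda^{-1}b)$ need not overlap, and it is precisely the seed $S_0=(\lambda^{-1}b,b)$ that bridges the gap; you do say ``together with $S_0$ this exhausts $J$'', but this is the one place where the bookkeeping could silently fail and it deserves a sentence. Second, both your case analysis ($\lambda>1$ versus $\lambda<1$) and the lemma itself implicitly assume $\lambda\neq 1$: for $\lambda=1$ the seed intervals are empty and the conclusion is false. This is harmless for the paper, since in the only application $\lambda=2^{1-\alpha}$ with $\alpha\neq 1$, but it should be recorded as a hypothesis.
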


We now proceed with the proof of Theorem \ref{thm:1}. 

\begin{proof}[Proof of Theorem \ref{thm:1}]

    1. Let $J:=\{\psi(r,\theta):(r,\theta) \in \Omega_{1,2,\theta_0} \}$. We show $\psi$ satisfies $\Delta\psi=g(\psi)$ for some $g\in C^1(J)$ and identify $\psi$ on $\partial\Omega_{1, 2, \theta_0}$. 
    By (\ref{eq:BC:hom:-alpha:theta}), (\ref{thm0:eq0}) and Lemma \ref{lem1_1} with $\alpha=1$, we have $c_1=c_2=c$ and (\ref{eq1_1_2}). Without loss of generality, let $A=0$ in  (\ref{eq1_1_2}). Then
    \begin{equation}\label{thm1:eq2}
        \psi(r,\theta)=c \ln r+h(\theta), \quad \text{on } \p \Omega_{1,2,\theta_0},
    \end{equation}
    where $h(\theta):= -\int_0^\theta u_r(1,\varphi) \text{d} \varphi \in C^3([0,\theta_0])$. By assumption, we have either $u_r>0$ or $u_r<0$ in $\overline{\Omega_{1,2,\theta_0}}$. So $h=-\int_0^\theta u_r(1,\varphi) \text{d} \varphi$ is strictly monotone and $B:=h(\theta_0)\ne 0$. Moreover, the extreme values of $\psi$ are attained on $\p \Omega_{1,2,\theta_0}$. When $c=0$, by (\ref{thm1:eq2}) we have $\psi=h(\theta)$ on $\partial \Omega_{1,2,\theta_0} $, and thus $J=(\min \{0,B \}, \max \{0, B \})$.  When $c\ne 0$, by (\ref{thm1:eq2}) and the fact that  $h$ is strictly monotone, we have $\psi$ attains its extrema exclusively at two diagonally opposite vertices of $\overline{\Omega_{1,2,\theta_0}}$: either $\{\p \Omega^{TL}_{1,2,\theta_0}, \p \Omega^{BR}_{1,2,\theta_0} \}$ or $ \{ \p \Omega^{TR}_{1,2,\theta_0}, \p \Omega^{BL}_{1,2,\theta_0} \}$.
    By continuity of $\psi$ and the connectivity of $\overline{\Omega_{1,2,\theta_0}}$, the range $J$ forms a bounded open interval excluding these extreme values. Specifically, $J =\psi(\ell)$, where $\ell$ is any open continuous curve in $\overline{\Omega_{1,2,\theta_0}}$ connecting the two extremal points. 
    
    In particular, the above implies  the condition (i) in Lemma \ref{lem:2} holds. Then by Lemma \ref{lem:3}, there exists a scalar function $g \in C^1(J)$, such that
    \begin{equation}\label{thm1:eq4}
        \Delta \psi=\frac{\p^2 \psi}{\p r^2}+\frac{1}{r} \frac{\p \psi}{\p r}+\frac{1}{r^2} \frac{\p^2 \psi}{\p \theta^2}=g(\psi), \quad (r,\theta) \in \Omega_{1,2,\theta_0}.
    \end{equation}
    By (\ref{eq:BC:hom:-alpha:r}) with $\alpha=1$, we have $u_\theta|_{r=1}=2u_\theta|_{r=2}$ and $\p_r u_\theta|_{r=1}=4\p_r u_\theta|_{r=2}$. By this and (\ref{thm0:eq0}), we have
    \begin{equation}\label{thm1:eq5}
        \frac{\p \psi}{\p r}(1,\theta)=
        2\frac{\p \psi}{\p r}(2,\theta),\quad
        \frac{\p^2 \psi}{\p r^2}(1,\theta)=
        4\frac{\p^2 \psi}{\p r^2}(2,\theta).
    \end{equation} 
    Let $s:=\ln r$, $D:=(0,\ln 2) \times (0,\theta_0)$ and 
    \begin{equation}\label{thm1:eq6}
        \Psi(s,\theta) :=\psi(e^s,\theta)-cs, \quad (s,\theta)\in \overline{D}.
    \end{equation}
    Then $\Psi \in C^3(\overline{D})$ and  $|\p_\theta \Psi|=|\p_\theta \psi|>0$. By (\ref{thm1:eq4}) and the above, we have
    \begin{equation}\label{thm1:eq7}
        \frac{\p^2 \Psi}{\p s^2}+\frac{\p^2 \Psi}{\p \theta^2}=e^{2s} g(\Psi+cs), \quad (s,\theta)\in  D.
    \end{equation}
    By (\ref{thm1:eq2}), (\ref{thm1:eq5}) and (\ref{thm1:eq6}), we have 
    \begin{equation}\label{thm1:eq8'}
        \Psi(s,\theta) = h(\theta), \quad \text{ on } \p D,
    \end{equation}
    \begin{equation}\label{thm1:eq8}
        \frac{\p \Psi}{\p s}(0,\theta)= \frac{\p \Psi}{\p s}(\ln 2,\theta),\quad
        \frac{\p^2 \Psi}{\p s^2}(0,\theta) = \frac{\p^2 \Psi}{\p s^2}(\ln 2,\theta), \quad \theta \in [0,\theta_0].
    \end{equation}
    In particular,  
    \begin{equation}\label{thm1:eq_B}
      \Psi(s, 0)=h(0)=0, \quad \Psi(s, \theta_0)=h(\theta_0)=B.
    \end{equation}
    
    Taking limits as $s \to 0^+$ and $s \to (\ln 2)^-$ in (\ref{thm1:eq7}), and applying (\ref{thm1:eq8'})-(\ref{thm1:eq8}), we obtain that 
    \begin{equation}\label{thm1:eq11}
        g(h(\theta))=4g(h(\theta)+c \ln 2), \quad \theta \in (0,\theta_0).
    \end{equation}
    
    \noindent 2. Now we prove Theorem \ref{thm:1} (i), where $c=0$. In this case, we have $J=(\min \{0,B \}, \max \{0, B \})$, where $B=h(\theta_0)$. By (\ref{thm1:eq11}), we have $g(h(\theta))=4g(h(\theta))$ and thus $g(h(\theta))=0$ for $\theta\in (0, \theta_0)$. So $g\equiv 0$ in $J$.
    In view of  (\ref{thm1:eq7}), we apply Lemma \ref{lem:4} to $\Psi$ with $L=\p_{s}^2+\p_{\theta}^2$,  $F=g=0$, and $\Omega=D$ there. Note we have $|\p_\theta \Psi|>0$, (\ref{thm1:eq8'})-(\ref{thm1:eq_B}), and condition (ii) in Lemma \ref{lem:4} holds for $\Psi$. Then by Lemma \ref{lem:4}, we have $\Psi(s,\theta)=h(\theta)$ in $\overline{D}$.
    Thus (\ref{thm1:eq7}) is reduced to $h''(\theta)=0$. Note $h(0)=0$ and $h(\theta_0)=B \neq 0$, we have $\Psi(s, \theta)=h(\theta)= \frac{B}{\theta_0} \theta$. By (\ref{thm1:eq6}), we have
    \begin{equation*}
        \psi(r,\theta)=\Psi(s, \theta)=\frac{B}{\theta_0} \theta,  \quad (r,\theta)\in \overline{\Omega_{1,2,\theta_0}}.
    \end{equation*}
    Using (\ref{thm0:eq0}), we conclude that $\vu$ must be $(-1)$-homogeneous and takes the form $\vu=\frac{b}{r} \bm{e_r}$, where $b=-B/\theta_0$. Then by the first line of Euler equations (\ref{eq:euler}), we obtain
    \[
        \frac{\p P}{\p r}=\frac{b^2}{r^3}, \quad \frac{\p P}{\p \theta} =0, 
    \]
    which implies that $P=-\frac{b^2}{2r^2}+C$ for some constant $C$.  Theorem \ref{thm:1} (i) is proved.  
    
    \medskip

    \noindent 3. Next, we prove Theorem \ref{thm:1} (ii), where $c\ne 0$. We only consider the case where $c>0$ and $h$ is strictly increasing. Similar arguments, with appropriate modifications, apply to the remaining cases: $c>0$ and $h$ is strictly decreasing, $c<0$ and $h$ is strictly increasing, as well as $c<0$ and $h$ is strictly decreasing. 
    
    In this case, $c>0$ and $h$ is strictly increasing, by (\ref{thm1:eq2}), the maximum and minimum values of $\psi$ in $\overline{\Omega_{1,2,\theta_0}}$ can only be attained at $\p \Omega^{TR}_{1,2,\theta_0}$ and $\p \Omega^{BL}_{1,2,\theta_0}$, respectively. Therefore, we have $J=(0,c\ln2+B)$.
    Let $G(z):=g(z)e^{\frac{2}{c}z}$ for $z\in J$. Then $G\in C^1(J)$ since $g\in C^1(J)$. We claim that $G\equiv const$ on $J$. To see this, note by (\ref{thm1:eq11}), we have 
    \begin{equation}\label{thm1:eq17}
        G(h(\theta)+c \ln 2)=G(h(\theta)), \quad \theta \in (0,\theta_0).
    \end{equation}
    By (\ref{thm1:eq7}), we have 
    \begin{equation}\label{thm1:eq18}
        \frac{\p^2 \Psi}{\p s^2}+\frac{\p^2 \Psi}{\p \theta^2}=e^{-\frac{2}{c} \Psi} G(\Psi+cs), \quad  (s,\theta)\in  D.
    \end{equation}
    By assumption of the theorem, when $c\ne 0$, we have $\p_\theta (ru_r)|_{\theta=0}=A$ (or $\p_\theta (ru_r)|_{\theta=\theta_0}=A$). By this, (\ref{thm0:eq0}) and (\ref{thm1:eq6}), we have
    \begin{equation}\label{thm1:eq19}
        \frac{\p^2 \Psi}{\p \theta^2}(s,0)=-A \quad \bigg(\text{resp.}\ \frac{\p^2 \Psi}{\p \theta^2}(s,\theta_0)=-A \bigg),\quad s\in [0, \ln 2].
    \end{equation}
    By (\ref{thm1:eq_B}), we have 
    $\partial_s^2\Psi(s, 0)=\partial_s^2\Psi(s, \theta_0)=0$.  By this and (\ref{thm1:eq19}), taking the limit as $\theta \to 0^+$ (resp. $\theta \to \theta_0^-$) in (\ref{thm1:eq18}), we have 
    \begin{equation*}
        G(cs)=-A \quad (\text{resp.}\  G(cs+B)=-A e^{\frac{2B}{c}}),
    \end{equation*}
    for $s \in (0,\ln 2]$. 
    So $G(s)=-A$ on $(0, c\ln 2)\subset J$ (resp. $G(s) = - A \exp (2 B / c)$ on $(B, B+c\ln 2)\subset J$). By (\ref{thm1:eq17}) and Lemma \ref{lem1_2}, we have $G\equiv -A$ on $J$. 
    Then by (\ref{thm1:eq18}), we have
    \begin{equation*}
        \frac{\p^2 \Psi}{\p s^2}+\frac{\p^2 \Psi}{\p \theta^2}=-A e^{-\frac{2}{c} \Psi},  \quad (s,\theta)\in D. 
    \end{equation*}
    Applying Lemma \ref{lem:4} to $\Psi$ with $L=\p_{s}^2+\p_{\theta}^2$, $F=1$, $g(\Psi)=-Ae^{-\frac{2}{c} \Psi}$ and $\Omega=D$ there. By (\ref{thm1:eq8'})-(\ref{thm1:eq_B}) and the fact $|\p_\theta \Psi|>0$, 
    condition (\ref{eq4_1}) and  (ii) in Lemma \ref{lem:4} holds for $\Psi$, and  we have $\Psi(s,\theta)=h(\theta)$ in $\bar{D}$.
    By (\ref{thm1:eq6}), we have
    \begin{equation*}
        \psi(r,\theta)=c \ln r+h(\theta), \quad (r,\theta) \in \overline{\Omega_{1,2,\theta_0}}.
    \end{equation*}
    In view of (\ref{thm0:eq0}),  we have $\vu$ is $(-1)$-homogeneous and
    \begin{equation}{\label{thm:eq19}}
        \vu=\frac{c}{r} \bm{e_\theta}+\frac{f(\theta)}{r} \bm{e_r},
    \end{equation}
    where $f(\theta)=-h'(\theta)$ is strictly sign-definite. 
    Substituting (\ref{thm:eq19}) into the first line of (\ref{eq:euler}), we have  
    \[
        \frac{\p P}{\p r} =\frac{1}{r^3}\big( f^2(\theta)-cf'(\theta)+c^2 \big), \quad 
        \frac{\p P}{\p \theta}=0.
    \]
    This means that $f^2(\theta)-cf'(\theta)+c^2=const$, and 
    \begin{equation*}
        P=\frac{p}{r^2}+C,
    \end{equation*}
    where $C$ is an arbitrary constant, and $p$ is a constant satisfying $cf'(\theta)=f^2(\theta)+c^2+2p$.
    The proof of Theorem \ref{thm:1} is complete. 
\end{proof}

Next, we prove Theorem \ref{thm:2} with a similar approach to that for Theorem \ref{thm:1}.

\begin{proof}[Proof of Theorem \ref{thm:2}]
    \noindent 1. Let $\alpha\ne 1$. Let $J :=\{\psi(r,\theta) : (r,\theta) \in \Omega_{1,2,\theta_0} \}$. We show $\psi$ satisfies $\Delta\psi=g(\psi)$ for some $g\in C^1(J)$ and identify $\psi$ on $\partial\Omega_{1, 2, \theta_0}$.  For convenience, denote
    \begin{equation*}
        C_1:=\frac{c_1}{1-\alpha}, \quad C_2:=\frac{c_2}{1-\alpha}.
    \end{equation*}
    By (\ref{eq:BC:hom:-alpha:theta})-(\ref{thm0:eq0}) and Lemma \ref{lem1_1} with $\alpha \ne 1$, we have (\ref{eq1_1_3}). Without loss of generality, assume $A=0$ in (\ref{eq1_1_3}). We have
    \begin{equation}\label{thm2:eq2}
        \psi(r,\theta)=h(\theta) r^{1-\alpha}, \quad (r,\theta)\in \p \Omega_{1,2,\theta_0}, 
    \end{equation}
    where $h(\theta):= -\int_0^\theta u_r(1,\varphi) \text{d} \varphi+C_1\in C^3([0,\theta_0])$. 
    By assumption, we have either $u_r>0$ or $u_r<0$ in $\overline{\Omega_{1,2,\theta_0}}$. So $h=-\int_0^\theta u_r(1,\varphi) \text{d} \varphi+C_1$ is strictly monotone. So $C_1=h(0)\ne h(\theta_0)=C_2$.
    Since $\psi$ is strictly monotone with respect to $\theta$, the maximum and minimum of $\psi$ in $\overline{\Omega_{1,2,\theta_0}}$ can only be attained on $\p \Omega^T_{1,2,\theta_0} \cup \p \Omega^B_{1,2,\theta_0}$. Therefore, by the continuity of $\psi$ and the connectivity of $\overline{\Omega_{1,2,\theta_0}}$, it follows that
    \[
        J = (\min \{ C_1, C_2, 2^{1-\alpha} C_1, 2^{1-\alpha} C_2 \}, \max \{ C_1, C_2, 2^{1-\alpha} C_1, 2^{1-\alpha} C_2 \}) 
    \]
    is an open interval which does not include the extreme values of $\psi$ in $\overline{\Omega_{1,2,\theta_0}}$. 
    
    By (\ref{thm2:eq2}), we have $\psi(r, 0)=h(0)r^{1-\alpha}$ and $\psi(r, \theta_0)=h(\theta_0)r^{1-\alpha}$ are monotone for $r\in (1, 2)$. By this and the fact that $|\partial_{\theta}\psi|>0$, the condition (i) in Lemma \ref{lem:2} is satisfied. Then by Lemma \ref{lem:3}, there exists a scalar function $g \in C^1(J)$, such that
    \begin{equation}\label{thm2:eq4}
        \Delta \psi=\frac{\p^2 \psi}{\p r^2}+\frac{1}{r} \frac{\p \psi}{\p r}+\frac{1}{r^2} \frac{\p^2 \psi}{\p \theta^2}=g(\psi),  \quad (r,\theta) \in \Omega_{1,2,\theta_0}.
    \end{equation}
    By (\ref{eq:BC:hom:-alpha:r}) and (\ref{thm0:eq0}), we have
    \begin{equation}\label{thm2:eq5}
        \frac{\p \psi}{\p r}(1,\theta) =2^{\alpha}
        \frac{\p \psi}{\p r}(2,\theta),\quad
        \frac{\p^2 \psi}{\p r^2}(1,\theta)=2^{\alpha+1}
        \frac{\p^2 \psi}{\p r^2}(2,\theta), \quad \theta\in [0, \theta_0].
    \end{equation}
    Let $s:=\ln r$, $D:=(0,\ln 2) \times (0,\theta_0)$ and
    \begin{equation}\label{thm2:eq6}
        \Psi(s,\theta):=\psi(e^s,\theta)e^{s(\alpha-1)}, \quad (s,\theta) \in \overline{D}.
    \end{equation}
    Then $\Psi \in C^3(\overline{D})$ and $|\p_\theta\Psi|>0$ since $|\p_\theta\psi|>0$. By (\ref{thm2:eq2})-(\ref{thm2:eq6}), we have
    \begin{equation}\label{thm2:eq7}
        \frac{\p^2 \Psi}{\p s^2}+ \frac{\p^2 \Psi}{\p \theta^2}+ 2(1-\alpha) \frac{\p \Psi}{\p s}+(1-\alpha)^2 \Psi =e^{(1+\alpha)s} g(e^{(1-\alpha)s} \Psi), \quad (s,\theta)\in D,
    \end{equation}
    and  
    \begin{equation}\label{thm2:eq8} 
    \begin{split}
        \Psi(s,\theta)&=h(\theta), \quad \text{ on } \p D, \\
        \frac{\p \Psi}{\p s}(0,\theta)= \frac{\p \Psi}{\p s}(\ln 2,\theta),\quad
        \frac{\p^2 \Psi}{\p s^2}&(0,\theta)= \frac{\p^2 \Psi}{\p s^2}(\ln 2,\theta), \quad \theta \in [0,\theta_0].
    \end{split}
    \end{equation}
    In particular, we have, for $s\in [0, \ln 2]$, that 
    \begin{equation}\label{thm2:eqB}
        \Psi(s, 0)=h(0)=C_1, \quad \Psi(s, \theta_0)=h(\theta_0)=C_2, \quad \partial_s^j\Psi(s, 0)=\partial_s^j\Psi(s, \theta_0)=0, \quad j=1, 2. 
    \end{equation}
    Let $s \to 0^+$ and $s \to (\ln 2)^-$ in (\ref{thm2:eq7}), by (\ref{thm2:eq8}), we obtain 
    \begin{equation*}
        g(h(\theta))=2^{1+\alpha} g(2^{1-\alpha} h(\theta)), \quad  \theta \in (0,\theta_0),
    \end{equation*}
    namely,
    \begin{equation}\label{thm2:eq11}
        g(s) = 2^{1+\alpha} g(2^{1-\alpha} s), \quad s\in (\min\{C_1,C_2\}, \max\{C_1,C_2\}).
    \end{equation}

    Let $J^+ := J\cap \{z>0\}$, $J^- := J\cap\{z<0\}$, and $J^0 := J\cap\{0\}$. Then $J = J^+ \cup J^- \cup J^0$. Let $I^+ := (0, \theta_0)\cap\{\theta: h(\theta)>0\}$ and $I^- := (0, \theta_0)\cap\{\theta: h(\theta)<0\}$. By (\ref{thm2:eq11}), we have if $J^0 \not= \emptyset$, then $g(0)=0$. 

    \medskip

    \noindent 3. Next, we prove that $\Psi(s,\theta)=h(\theta)$ in $\overline{D}$. 

    \noindent\textbf{Claim 1.} Assume $\p_\theta (r^{\alpha} u_r)|_{\theta=0}=c_3$. If $C_1>0$, then $g(z)=B_1^{+}|z|^{\frac{\alpha+1}{\alpha-1}}$ in $J^+$ for some constant $B_1^+$. If $C_1<0$, then $g(z)=B_1^{-}|z|^{\frac{\alpha+1}{\alpha-1}}$ in $J^-$ for some constant $B_1^-$.

    Indeed, if $\p_\theta (r^{\alpha} u_r)|_{\theta=0}=c_3$, then by (\ref{thm0:eq0}) and (\ref{thm2:eq6}), we have
    \begin{equation*}
        \frac{\p^2 \Psi}{\p \theta^2}(s,0)=-c_3, \quad s \in [0,\ln 2].
    \end{equation*}
    Sending $\theta\to 0^+$ in (\ref{thm2:eq7}), by (\ref{thm2:eqB}) and the above, we have 
    \begin{equation}\label{eq2_new_1}
        e^{(1+\alpha)s} g(e^{(1-\alpha)s}C_1)=(1-\alpha)^2C_1-c_3, \quad s\in (0, \ln 2).
    \end{equation}
    Thus for $z\in (\min\{C_1, 2^{1-\alpha} C_1\}, \max\{C_1, 2^{1-\alpha} C_1\})$, we have 
    \[
        g(z) = C |z|^{\frac{\alpha+1}{\alpha-1}}. 
    \]
    By (\ref{thm2:eq11}), (\ref{eq2_new_1}) and Lemma \ref{lem1_3}, we know that Claim 1 is true.

    \medskip

    Similarly, we have

    \noindent\textbf{Claim 2.} Assume $\p_\theta (r^\alpha u_r)|_{\theta=\theta_0}=c_4$ for some constant $c_4$. If $C_2>0$, then $g(z)=B_2^{+}|z|^{\frac{\alpha+1}{\alpha-1}}$ in $J^+$ for some constant $B_2^+$. If $C_2<0$, then $g(z)=B_2^{-}|z|^{\frac{\alpha+1}{\alpha-1}}$ in $J^-$ for some constant $B_2^-$.

    \noindent\textbf{Claim 3.} If one of (A1)-(A4) holds and $J^+$ or $J^- \not=\emptyset$, then 
    \begin{equation}\label{eq2_new_4}
        g(z)=C^{\pm}|z|^{\frac{\alpha+1}{\alpha-1}}, \quad z\in J^{\pm}
    \end{equation}
    for some constants $C^{\pm}$ respectively. 
    
    \noindent \emph{Proof of Claim 3}. Note we have $C_1\ne C_2$. Without loss of generality, assume $C_2>C_1$. 
      
    If (A1) holds, we have $C_2>C_1>0$, and either $\p_\theta (r^{\alpha} u_r)|_{\theta=0}=c_3$ or  $\p_\theta (r^\alpha u_r)|_{\theta=\theta_0}=c_4$ for some constants $c_3, c_4$.  Note the range of $h$ is $(C_1, C_2)$. By (\ref{thm2:eq2}), we have $J=(C_1, 2^{1-\alpha}C_2)$ if $\alpha<1$, and $(2^{1-\alpha}C_1, C_2)$ if $\alpha>1$. In both cases $J\subset (0, \infty)$. Then by Claim 1 and 2, we have (\ref{eq2_new_4}) for $z\in J^+=J$.

    If (A2) holds, we have $C_2>C_1=0$ and   $\p_\theta (r^\alpha u_r)|_{\theta=\theta_0}=c_4$. The range of $h$ is $(0, C_2)$. By (\ref{thm2:eq2}), we have $J=(0, 2^{1-\alpha}C_2)$ if $\alpha<1$, and $(0, C_2)$ if $\alpha>1$. In both cases $J\subset (0, \infty)$. Then by Claim 1 and 2, we have (\ref{eq2_new_4}) for $z\in J^+=J$. 
 
    If (A3) holds, we have $0=C_2>C_1$ and   $\p_\theta (r^\alpha u_r)|_{\theta=0}=c_3$. The range of $h$ is $(C_1, 0)$. By (\ref{thm2:eq2}), we have $J=(2^{1-\alpha}C_1, 0)$ if $\alpha<1$, and $(C_1, 0)$ if $\alpha>1$. In both cases $J\subset (-\infty, 0)$. Then by Claim 1 and 2, we have (\ref{eq2_new_4}) for $z\in J^-=J$. 
  
    If (A4) holds, we have $C_1<0<C_2$, and both $\p_\theta (r^\alpha u_r)|_{\theta=0}=c_3$ and  $\p_\theta (r^\alpha u_r)|_{\theta=\theta_0}=c_4$. By Claim 1, (\ref{eq2_new_4}) holds for $z\in J^-$. By Claim 2,  (\ref{eq2_new_4}) holds for $z\in J^+$. Claim 3 is proved.
    
    So 
    \begin{equation}\label{thm2:eq:200}
        g(z)=
        \begin{cases}
        C^+|z|^{\frac{\alpha+1}{\alpha-1}}, &z\in J^+, \text{if }J^+ \not= \emptyset,  \\
        0, & z\in J^0, \text{if }J^0 \not= \emptyset, \\
        C^-|z|^{\frac{\alpha+1}{\alpha-1}}, &z\in J^-, \text{if }J^- \not= \emptyset.
        \end{cases}
    \end{equation}

    Under condition (A1), by the above arguments, we have $J\subset\subset (0, \infty)$ or $(-\infty, 0)$ and $g$ is Lipschitz continuous for all $z\in \bar{J}$. Under condition (A2), (A3) or (A4), we have $\frac{\alpha+1}{\alpha-1}>1$, and $g$ is Lipschitz for all $z\in \bar{J}$. 
    Substituting (\ref{thm2:eq:200}) into (\ref{thm2:eq7}), we have 
    \begin{equation*}
        \frac{\p^2 \Psi}{\p s^2}+ \frac{\p^2 \Psi}{\p \theta^2}+ 2(1-\alpha) \frac{\p \Psi}{\p s}+(1-\alpha)^2 \Psi =g(\Psi) \text{ in } D.
    \end{equation*}
    Applying Lemma \ref{lem:4} with $L=\p_{s}^2+\p_{\theta}^2+2(1-\alpha)\p_s+(1-\alpha)^2$, $F=1$, and $g$ defined in (\ref{thm2:eq:200}), and utilizing the boundary conditions (\ref{thm2:eq8}), (\ref{thm2:eqB}) along with $|\p_\theta \Psi|>0$, we conclude that 
    \begin{equation}\label{eq2_new_5}
        \Psi(s,\theta)=h(\theta)  \text{ in } \overline{D},
    \end{equation}
    where $h$ is a strictly monotone function satisfying $h(0) = C_1$ and $h(\theta_0)=C_2$.
    
    \medskip

    \noindent 3.  
    By (\ref{thm2:eq6}) and (\ref{eq2_new_5}), we have
    \begin{equation*}
        \psi(r,\theta)=h(\theta) r^{1-\alpha}, \quad   \text{ in } \overline{\Omega_{1,2,\theta_0}}.
    \end{equation*}
    By (\ref{thm0:eq0}), 
    we conclude that $\vu$ must be $(-\alpha)$-homogeneous and takes the form
    \begin{equation}{\label{e31}}
        \vu=\frac{v(\theta)}{r^\alpha} \bm{e_\theta}+\frac{f(\theta)}{r^\alpha} \bm{e_r},
    \end{equation}
    where $f(\theta)=-h'(\theta)$ is strictly sign-definite for $\theta \in [0,\theta_0]$ in cases (A1)-(A4), $v(\theta)=(1-\alpha) h(\theta)$ is strictly monotone. Clearly,
    \begin{equation}\label{thm2:eq51}
        (1-\alpha)f+v'=0.
    \end{equation}
    Note that the function $v$ satisfies 
    $v(0)=(1-\alpha)C_1=c_1$ and $v(\theta_0)=(1-\alpha)C_2=c_2$. Therefore, $|v|>0$ for $\theta \in (0,\theta_0)$ when $c_1 c_2 \geqslant 0$, and $v$ is sign-changing when $c_1 c_2<0$. 
    
    Substituting (\ref{e31}) into the first line of (\ref{eq:euler}), we have
    \[
        \frac{\p P}{\p r} =\frac{1}{r^{2\alpha+1}} ( \alpha f^2-vf'+v^2 ), \quad \frac{\p P}{\p \theta} =\frac{1}{r^{2\alpha}} \big( (\alpha-1)vf-vv' \big). 
    \]
    By (\ref{thm2:eq51}) and the second identify in the above, we have $\partial_{\theta}P=0$. Then taking $\partial_{\theta}$ of the first identify, we have $\partial_{\theta}\partial_rP=0$ and consequently $\alpha f^2-vf'+v^2=const$. Then the first identity is reduced to $\partial_rP=c_p/r^{2\alpha+1}$ for some constant $c_p$, and thus
    \begin{equation*}
        P=\frac{p}{r^{2\alpha}}+C,
    \end{equation*}
    where $C$ is an arbitrary constant, and $p$ is a constant satisfying $vf'-\alpha f^2-v^2=2\alpha p$.
    This completes the proof of Theorem \ref{thm:2}. 
\end{proof}

We now proceed to prove Theorem \ref{thm:3}. 

\noindent
\emph{Proof of Theorem \ref{thm:3}}:

    \noindent 1. Let $J:=\{\psi(r,\theta):(r,\theta) \in \Omega_{1,2,\theta_0} \}$. We show $\psi$ satisfies $\Delta\psi=g(\psi)$ for some $g\in C^1(J)$ and identify $\psi$ on $\partial\Omega_{1, 2, \theta_0}$. 

    By assumption, $u_r|_{r=1}=u_r|_{r=2}=0$. By this, (\ref{eq:BC:hom:-alpha:theta}), (\ref{thm0:eq0}) and Lemma \ref{lem1_1}, we have (\ref{eq1_1_2}) and (\ref{eq1_1_3}) with $u_r(1, \theta)=0$. Without loss of generality, let $A=0$ in  (\ref{eq1_1_2}) and (\ref{eq1_1_3}). Then
    \begin{equation}\label{thm3:eq2}
        \psi(r,\theta) = 
        \left\{
        \begin{array}{ll}
            c \ln r, & \textrm{ if }\alpha=1,\\
            \displaystyle \frac{c}{1-\alpha} r^{1-\alpha}, & \textrm{ if }\alpha\ne 1,
        \end{array}
        \right. 
        \quad (r,\theta)\in \p \Omega_{1,2,\theta_0}, 
    \end{equation}
    where $c_1=c_2=c\not=0$. 

    In particular, if $\alpha=1$, then  $J=(\min \{0, c \ln 2 \}, \max \{0, c \ln 2 \})$. If $\alpha>1$, then  $J=(\min \{\frac{c}{1-\alpha}, \frac{c}{1-\alpha}2^{1-\alpha} \}, \max \{\frac{c}{1-\alpha}, \frac{c}{1-\alpha}2^{1-\alpha} \})$. 
    In either case, condition (ii) in Lemma \ref{lem:2} holds, and Lemma \ref{lem:3} implies that there exists a scalar function $g \in C^1(J)$, such that 
    \begin{equation}\label{thm3:eq4}
        \Delta \psi=\frac{\p^2 \psi}{\p r^2}+\frac{1}{r} \frac{\p \psi}{\p r}+\frac{1}{r^2} \frac{\p^2 \psi}{\p \theta^2}=g(\psi), \quad (r,\theta)\in \Omega_{1,2,\theta_0}.
    \end{equation}

    By (\ref{thm0:eq0}) and boundary condition $u_r|_{\theta=0}=u_r|_{\theta=\theta_0}$, it follows that for all $r \in [1,2]$, 
    \begin{equation}\label{thm3:eq5}
        \frac{\p \psi}{\p \theta}(r,0)=
        \frac{\p \psi}{\p \theta}(r,\theta_0).
    \end{equation}
    By continuity of $\psi$, (\ref{thm3:eq2}) and (\ref{thm3:eq4}), it is easy to check that for all $r \in [1,2]$, 
    \begin{equation}\label{thm3:eq55}
        \frac{\p^2 \psi}{\p \theta^2}(r,0)=
        \frac{\p^2 \psi}{\p \theta^2}(r,\theta_0).
    \end{equation}
    Let $s=\ln r$, $D:=(0, \ln 2) \times (0,\theta_0)$ and
    \begin{equation}\label{thm3:eq6}
        \Psi(s,\theta):=\psi(e^s,\theta), \quad (s,\theta)\in \overline{D}.
    \end{equation}
    Then $\Psi \in C^3(\overline{D})$ and $|\p_s\Psi|=e^s|\p_r\psi|>0$. By (\ref{thm3:eq4})-(\ref{thm3:eq6}), we have
    \begin{equation}\label{thm3:eq7}
        \frac{\p^2 \Psi}{\p s^2}+\frac{\p^2 \Psi}{\p \theta^2}=e^{2s} g(\Psi), \quad (s,\theta)\in D,
    \end{equation}
    and 
    \begin{equation}\label{thm3:eq8} 
        \frac{\p \Psi}{\p \theta}(s,0)= \frac{\p \Psi}{\p \theta}(s,\theta_0),\quad
        \frac{\p^2 \Psi}{\p \theta^2}(s,0)= \frac{\p^2 \Psi}{\p \theta^2}(s,\theta_0), \quad s \in [0,\ln 2].
    \end{equation}

    \noindent 2. Next, we show $\vu$ must be $(-\alpha)$-homogeneous and takes the form
    \begin{equation}\label{thm3:eq1015}
        \vu=\frac{c}{r^\alpha} \bm{e_\theta}.
    \end{equation}

    We first prove this when $\alpha=1$. In this case,  (\ref{thm3:eq2}) and (\ref{thm3:eq6}) imply
    \begin{equation}\label{thm3:eq9} 
        \Psi(s,\theta)=cs,\quad (s,\theta)\in \p D,
    \end{equation}
    Taking the limits as $\theta \to 0^+$ in (\ref{thm3:eq7}), note the above implies $\partial_s^2\Psi(s, 0)=0$, we have
    \begin{equation}\label{thm3:eq11}
        g(cs)=e^{-2s}\frac{\p^2 \Psi}{\p \theta^2}(s,0), \quad s\in (0, \ln 2). 
    \end{equation}
    Since $\Psi \in C^3(\overline{D})$ and $g \in C^1(J)$,  differentiate both sides of (\ref{thm3:eq11}) with respect to $s$, we know that $g' \in L^\infty(J)$, therefore $g$ is a Lipschitz function. 
    
    If $c>0$, then the assumptions in the theorem imply that $\p_\theta (r u_r)|_{\theta=0} \geqslant 0$ or $\p_\theta (r u_r)|_{\theta=\theta_0} \geqslant 0$. By (\ref{thm0:eq0}), (\ref{thm3:eq6}) and (\ref{thm3:eq8}), we have
    \begin{equation*}
        \frac{\p^2 \Psi}{\p \theta^2}(s,0) \leqslant 0, \quad s\in [0,\ln 2].
    \end{equation*}
    Combining this with (\ref{thm3:eq11}), we conclude that $g \leqslant 0$ in $J$. 
    
    Applying Lemma \ref{lem:4} with $L=\p_{s}^2+\p_{\theta}^2$, $F(s)=e^{2s}$, $g$ defined in (\ref{thm3:eq11}) and $\Omega=D$ there.  Using the boundary conditions (\ref{thm3:eq8}) and (\ref{thm3:eq9}), along with $|\p_s \Psi|>0$, we conclude that condition (ii) in Lemma \ref{lem:4} holds, and
    \begin{equation}\label{thm3:eq13}
        \Psi(s,\theta)=cs, \quad (s,\theta)\in \overline{D}.
    \end{equation}

    If $c<0$, then the assumptions in the theorem imply that $\p_\theta (r u_r)|_{\theta=0} \leqslant 0$ or $\p_\theta (r u_r)|_{\theta=\theta_0} \leqslant 0$. Then a similar argument shows that $g \geqslant 0$. 
    Lemma \ref{lem:4} implies that (\ref{thm3:eq13}) remains valid and thus
    \begin{equation*}
        \psi(r,\theta)=c \ln r, \quad (r,\theta)\in \overline{\Omega_{1,2,\theta_0}}.
    \end{equation*}
    Then from (\ref{thm0:eq0}), it holds that $\vu$ must be $(-1)$-homogeneous, taking the form (\ref{thm3:eq1015}). 
    
    \noindent 3. Next, we prove the theorem  when $\alpha>1$. By (\ref{thm3:eq2}) and (\ref{thm3:eq6}), we have
    \begin{equation}\label{thm3:eq109} 
        \Psi(s,\theta)=\frac{c}{1-\alpha}e^{s(1-\alpha)}, \quad (s,\theta)\in \p D.
    \end{equation}
    Taking the limits as $\theta \to 0^+$ in (\ref{thm3:eq7}), we find that for all $s \in (0,\ln 2)$, 
    \begin{equation}\label{thm3:eq1011}
        g \Big(\frac{c}{1-\alpha}e^{s(1-\alpha)} \Big)=e^{-2s} \bigg\{c(1-\alpha)e^{s(1-\alpha)}+ \frac{\p^2 \Psi}{\p \theta^2}(s,0) \bigg\} .
    \end{equation}
    Since $\Psi \in C^3(\overline{D})$, $g \in C^1(J)$ and $0 \notin \bar{J}$, (\ref{thm3:eq1011}) implies that $g$ is a Lipschitz function.  
    
    If $c>0$, the assumptions in Theorem \ref{thm:3} imply that $\p_\theta(r^\alpha u_r)|_{\theta=0} \geqslant c(1-\alpha)$ or $\p_\theta (r^\alpha u_r)|_{\theta=\theta_0} \geqslant c(1-\alpha)$. By (\ref{thm0:eq0}), (\ref{thm3:eq6}) and (\ref{thm3:eq8}), we have that
    \begin{equation*}
        \frac{\p^2 \Psi}{\p \theta^2}(s,0) \leqslant c(\alpha-1)e^{s(1-\alpha)}, \quad s\in [0,\ln 2].
    \end{equation*}
    Combining this with (\ref{thm3:eq1011}), we know that $g \leqslant 0$ in $J$. 
    Applying Lemma \ref{lem:4} with $L=\p_{s}^2+\p_{\theta}^2$, $F(s)=e^{2s}$, $g$ defined in (\ref{thm3:eq1011}), and $\Omega=D$ there, using the boundary conditions (\ref{thm3:eq8}) and (\ref{thm3:eq109}) along with $|\p_s\Psi|>0$, we conclude that
    \begin{equation}\label{thm3:eq1013}
        \Psi(s,\theta)=\frac{c}{1-\alpha}e^{s(1-\alpha)}, \quad (s,\theta)\in \overline{D}.
    \end{equation}

    If $c<0$, then $\p_\theta(r^\alpha u_r)|_{\theta=0} \leqslant c(1-\alpha)$ or $\p_\theta (r^\alpha u_r)|_{\theta=\theta_0} \leqslant c(1-\alpha)$. 
    A similar argument as above shows that $g \geqslant 0$, and Lemma \ref{lem:4} implies that (\ref{thm3:eq1013}) holds.
    
    We obtain from (\ref{thm3:eq6}) that
    \begin{equation*}
        \psi(r,\theta)=\frac{c}{1-\alpha}r^{1-\alpha},\quad (r,\theta)\in \overline{\Omega_{1,2,\theta_0}}.
    \end{equation*}
    Then from (\ref{thm0:eq0}), it holds that $\vu$ must be $(-\alpha)$-homogeneous, taking the form (\ref{thm3:eq1015}). 
    So (\ref{thm3:eq1015}) holds for all $\alpha \geqslant 1$.
    Substituting (\ref{thm3:eq1015}) into the first line of (\ref{eq:euler}) leads to
    \[
        \frac{\p P}{\p r} =\frac{c^2}{r^{2\alpha+1}}, \quad \frac{\p P}{\p \theta} =0.
    \]
    It implies that 
    \begin{equation*}
        P=-\frac{c^2}{2\alpha r^{2\alpha}}+C,
    \end{equation*}
    where $C$ is an arbitrary constant. 
    This completes the proof of Theorem \ref{thm:3}. 
\qed

\begin{remark}
    If $\alpha<1$ in Theorem \ref{thm:3}, in order to prove the rigidity of $(-\alpha)$-homogeneous solutions to the Euler equations (\ref{eq:euler}), we expect the velocity field $\vu$ to take the form $\vu = \frac{c}{r^\alpha} \bm{e_\theta}$ with $c \neq 0$, which is equivalent to the stream function having the form (\ref{thm3:eq2}). However, direct calculation shows that the function $g$ in equation (\ref{thm3:eq7}) does not satisfy the sign conditions (ii) or (iii) of Lemma \ref{lem:4}. Therefore, our method is not suitable for the case $\alpha<1$.
\end{remark}

\bigskip

Next, we prove Theorem $\ref{thm:4}$. 
    
\noindent
\emph{Proof of Theorem \ref{thm:4}}: 
    
    \noindent 1. 
    Note that $(a, b)=(0, 1)$ or $(1, \infty)$. By assumption, we have $u_\theta|_{\theta=0}=u_\theta|_{\theta=\theta_0}=u_\theta|_{r=1} = c/r$ for some constant $c$. Note $\partial_{r}\psi=u_{\theta}$ and $\psi\in C^3(\overline{\Omega_{a, b, \theta_0}}\setminus \{ \bm{0}\})$, we have 
    \begin{equation}\label{thm4:eq1}
        \psi(r,\theta)=c \ln r+h(\theta),  \text{ in } \p \Omega_{a, b,\theta_0}\setminus \{ \bm{0}\}, 
    \end{equation}
    for some $h(\theta)\in C^3([0,\theta_0])$, and 
    \begin{equation}\label{thm4:eq3}
        \frac{\p \psi}{\p r}(1,\theta)=c, \quad \theta \in [0,\theta_0]. 
    \end{equation}

    Without loss of generality, assume  $h(0)=0$. 
    Note by (\ref{thm0:eq0}),  $\partial_{\theta}\psi=-ru_r$  in $\overline{\Omega_{a, b, \theta_0}}\setminus \{ \bm{0}\}$.  If $u_r>0$ in $\overline{\Omega_{a, b, \theta_0}}\setminus \{ \bm{0}\}$, then $\p_\theta \psi<0$ in $\overline{\Omega_{a, b, \theta_0}}\setminus \{ \bm{0}\}$, and $h(\theta)$ is strictly decreasing in $[0,\theta_0]$. 
    Similarly, if $u_r<0$ in $\overline{\Omega_{a, b, \theta_0}}\setminus \{ \bm{0}\}$, then $h(\theta)$ is strictly increasing in $[0,\theta_0]$.

    Define $J:=\{\psi(r,\theta):(r,\theta) \in \Omega_{a, b,\theta_0} \}$. The assumption that $|\partial_{\theta}\psi|>0$ in $\overline{\Omega_{a, b, \theta_0}}\setminus \{ \bm{0}\}$ implies that $\psi$ attains its extreme values on $\partial \Omega_{a, b,\theta_0}\setminus\{\bm{0}\}$.
    If $c=0$, it is clear that $J=(0,h(\theta_0))$ when $u_r<0$, and $J=(h(\theta_0),0)$ when $u_r>0$. 

    For both $(a, b)=(0, 1)$ and $(1, \infty)$, if $c \vu \cdot \bm{\nu} |_{r=1} > 0$, it can be checked that $\psi$ attains its extreme value in $\overline{\Omega_{a, b, \theta_0}}\setminus \{ \bm{0}\}$ only at $(r, \theta)=(1, 0)$, and $J=\psi(\p \Omega_{a, b,\theta_0}^{B} \setminus \{\bm{0}, (1, 0)\}$ is an open unbounded interval. 
    Similarly, if $c \vu\cdot \bm{\nu} |_{r=1} < 0$, it can be checked that $\psi$ attains its extreme value in $\overline{\Omega_{a, b, \theta_0}}\setminus \{ \bm{0}\}$ only at $(1, \theta_0)$, and $J=\psi(\p \Omega_{1,\infty,\theta_0}^{T} \setminus \{\bm{0}, (1, \theta_0)\}$ is an open unbounded interval.
    
    By (\ref{thm4:eq1}) and the fact $|\partial_{\theta}\psi|>0$ in $\overline{\Omega_{a, b, \theta_0}}\setminus \{ \bm{0}\}$, we also know condition (i) in Lemma \ref{lem:2} holds. By Lemma \ref{lem:3}, there exists a function $g \in C^1(J)$ such that 
    \begin{equation}\label{thm4:eq2}
        \Delta \psi=\frac{\p^2 \psi}{\p r^2}+\frac{1}{r} \frac{\p \psi}{\p r}+\frac{1}{r^2} \frac{\p^2 \psi}{\p \theta^2}=g(\psi) \text{ in } \Omega_{a, b,\theta_0}.
    \end{equation}
    Next, let $s:=\ln r$,  $D:=(\ln a, \ln b) \times (0,\theta_0)$ and 
    \begin{equation}\label{thm4:eq4}
        \Psi(s,\theta):=\psi(e^s,\theta)-cs, \quad (s,\theta) \in \overline{D}. 
    \end{equation}
    Them $\Psi \in C^3(\overline{D})$ and $|\p_\theta \Psi|=|\p_\theta \psi|>0$. By (\ref{thm4:eq1})-(\ref{thm4:eq4}), we have
    \begin{equation}\label{thm4:eq5}
        \frac{\p^2 \Psi}{\p s^2}+\frac{\p^2 \Psi}{\p \theta^2}=e^{2s} g(\Psi+cs), \quad (s,\theta)\in D,
    \end{equation}
    and 
    \begin{equation}\label{thm4:eq6}
        \begin{split}
        &\Psi(s,\theta) =h(\theta), \quad \text{ on } \p D, \\
        &\p_s \Psi(0,\theta) =0, \quad \theta \in [0,\theta_0]. 
        \end{split}
    \end{equation}

    \noindent 2. We first prove the theorem if (B1) or (B2) holds. If (B1) holds, we have $c=0$, $u_r<0$, and $\p_\theta u_r|_{r=1}\geqslant \p_r (ru_\theta)|_{r=1}$ (resp. if (B2) holds, then $c=0$, $u_r>0$ and $\p_\theta u_r|_{r=1}\leqslant \p_r (ru_\theta)|_{r=1}$). Then by this, (\ref{thm0:eq0}), and (\ref{thm4:eq4}), we have
    \begin{equation}\label{thm4:eq4000}
        \frac{\p^2 \Psi}{\p s^2}+\frac{\p^2 \Psi}{\p \theta^2}(0,\theta)\leqslant 0, \quad (\text{resp. } \frac{\p^2 \Psi}{\p s^2}+\frac{\p^2 \Psi}{\p \theta^2}(0,\theta) \geqslant 0), \quad \theta \in [0,\theta_0]. 
    \end{equation}
    Taking the limit $s \to 0$ in (\ref{thm4:eq5}), we have  
    \begin{equation}\label{thm4:eq4001}
        g(h(\theta))=\frac{\p^2 \Psi}{\p s^2}(0,\theta)+\frac{\p^2 \Psi}{\p \theta^2} (0,\theta), \quad \theta \in (0,\theta_0). 
    \end{equation}
    Differentiating (\ref{thm4:eq4001}) with respect to $\theta$. Note $h'(\theta)$ having a strict sign in $[0,\theta_0]$ and $\Psi \in C^3(\overline{D})$, we conclude that $g'(z)$ is bounded in $J$, and hence $g$ is Lipschitz continuous in $J$. Moreover, (\ref{thm4:eq4000}) implies that $g \leqslant 0$ (resp. $g \geqslant 0$).

    Applying Lemma \ref{lem:4} with $L=\p_{s}^2+\p_{\theta}^2$, $F(s)=e^{2s}$, $g$ defined in (\ref{thm4:eq4001}), and $\Omega=D$ there. Note we have $D=(-\infty, 0)\times (0, \theta_0)$ or $(0, \infty)\times (0, \theta_0)$. Using the boundary conditions (\ref{thm4:eq6}) along with $|\p_\theta \Psi|>0$ in $\Omega_{a, b, \theta_0}$, noting that either (iii) or (iv) in Lemma \ref{lem:4} holds, we conclude that $\Psi(s,\theta)=h(\theta)$ in $\bar{D}$. By (\ref{thm4:eq4}), we have 
    $\psi(r,\theta)=h(\theta)$. In view of (\ref{thm0:eq0}), we have
    \begin{equation}\label{thm4:eq6000}
        \vu=\frac{f(\theta)}{r} \bm{e_r}, \quad (r,\theta) \in \overline{\Omega_{a, b, \theta_0}}\setminus \{ \bm{0}\},
    \end{equation}
    where $f(\theta)=-h'(\theta)$ is strictly sign-definite. Substituting (\ref{thm4:eq6000}) into the first line of Euler equations (\ref{eq:euler}) gives 
    \begin{equation*}
        \frac{\p P}{\p r}=\frac{f^2(\theta)}{r^3},\quad \frac{\p P}{\p \theta}=0.
    \end{equation*}
    This implies that $f$ must be a constant, and
    \begin{equation*}
        P=\frac{p}{r^2}+C, 
    \end{equation*}
    where $p=-f^2/2$, and $C$ is an arbitrary constant. So the conclusion of Theorem \ref{thm:4} holds under condition (B1) or (B2).
    
    \medskip
    
    \noindent 3. Now we prove the theorem if (B3) or (B4) holds.

    \noindent \textbf{Claim}: If (B3) or (B4) holds,  then $g(z)=C e^{-\frac{2}{c}z},\ z \in J$ for some constant $C$.

    \noindent \textit{Proof of Claim:} 
    If (B3) holds, we have $\p_\theta(ru_r)|_{\theta=0}=A$. In view of (\ref{thm0:eq0}),  it implies
    \begin{equation}\label{thm4:eq7}
        \frac{\p^2 \Psi}{\p \theta^2}(s,0)=-A, 
    \end{equation}
    for $s\in [0,+\infty)$ if $a=1, b=\infty$, and $s\in (-\infty, 0]$ if $a=0, b=1$.
    Taking the limit as $\theta \to 0^+$ in (\ref{thm4:eq5}), by the first line of (\ref{thm4:eq6}), (\ref{thm4:eq7}) and $J=\psi(\p \Omega_{a, b, \theta_0}^{B} \setminus \{\bm{0}, (1, 0)\})$, we find that $g(z)=-A e^{-\frac{2}{c}z},\ z \in J$.

    If (B4) holds, we have $c \vu \cdot \bm{\nu} |_{r=1} < 0$ in $\overline{\Omega_{a, b, \theta_0}}\setminus \{ \bm{0}\}$ and $\p_\theta(ru_r)|_{\theta=\theta_0}=A$. In view of (\ref{thm0:eq0}), it follows that
    \begin{equation}\label{thm4:eq8}
        \frac{\p^2 \Psi}{\p \theta^2}(s,\theta_0)=-A. 
    \end{equation}
    Note that $J=\psi(\p \Omega_{a, b, \theta_0}^{T} \setminus \{\bm{0}, (1, \theta_0)\})$. Taking the limit as $\theta \to \theta_0^-$ in (\ref{thm4:eq5}), by (\ref{thm4:eq6}), (\ref{thm4:eq8}), there exists a constant $C$ such that $g(z)=C e^{-\frac{2}{c}z},\ z \in J$. The claim is proved. 

    Substituting $g(z)=C e^{-\frac{2}{c}z}$ and $z \in J$ into (\ref{thm4:eq5}), we have
    \begin{equation*}
        \frac{\p^2 \Psi}{\p s^2}+\frac{\p^2 \Psi}{\p \theta^2}=Ce^{-\frac{2}{c} \Psi}, \quad (s,\theta)\in D.
    \end{equation*} 
    We apply Lemma \ref{lem:4} to $\Psi$ with $L=\p_{s}^2+\p_{\theta}^2$, $F=1$, $g(\Psi)=Ce^{-\frac{2}{c} \Psi}$ and $\Omega=D$ there. By (\ref{thm4:eq6}) and the fact $|\p_\theta \Psi|>0$ in $\Omega_{a, b, \theta_0}$, we conclude that $\Psi(s,\theta)=h(\theta)$ in $\bar{D}$. In view of (\ref{thm4:eq4}), we have $\psi(r,\theta)=c\ln r+h(\theta)  \text{ in } \overline{\Omega_{a, b, \theta_0}}\setminus \{ \bm{0}\}$. 
    Then the same argument as in the proof of Theorem \ref{thm:1} (ii) yields the assertion of Theorem \ref{thm:4}.
\qed

\bigskip

Finally, we present the proof of Theorem \ref{thm:5}.

\noindent
\emph{Proof of Theorem \ref{thm:5}}:
    We prove this theorem for  $\alpha = 1$ and $\alpha \neq 1$ respectively.

    \noindent\textbf{(i)} $\alpha=1$. 
    By (\ref{eq:BC:hom:-alpha:theta}) with $\alpha=1$, we have $u_\theta|_{\theta=0}=c_1/r$, $u_\theta|_{\theta=\theta_0}=c_2/r$. By this and  (\ref{thm0:eq0}), we have 
    \[
        \psi(r,0)=c_1 \ln r+A_1, \quad \psi(r,\theta_0)=c_2 \ln r+A_2, \quad 0<r<\infty,
    \]
    for some constants $A_1, A_2$.  We claim that $c_1=c_2$. If not, then by the above, there exists some $r_*\in (0, \infty)$, such that $\psi(r_*,0)=\psi(r_*, \theta_0)$. Note by assumption, we have $|\partial_{\theta}\psi|=|ru_r|>0$. So $\psi(r_*, \theta)$ is strictly monotone in $\theta\in [0, \theta_0]$. Thus $\psi(r_*,0)\ne \psi(r_*, \theta_0)$ and we have a contradiction. So $c_1=c_2=c$ for some constant $c$, and 
    \begin{equation}\label{thm6:eq1}
        \psi(r,0)=c \ln r+A_1, \quad \psi(r,\theta_0)=c \ln r+A_2, \quad 0<r<\infty. 
    \end{equation}

    By this and the assumption that $|\partial_{\theta}\psi|>0$,  condition (i) in Lemma \ref{lem:2} is satisfied. By Lemma \ref{lem:3}, there exists a function $g \in C^1(\mathbb{R})$ such that 

    \begin{equation}\label{thm6:eq2}
        \Delta \psi=\frac{\p^2 \psi}{\p r^2}+\frac{1}{r} \frac{\p \psi}{\p r}+\frac{1}{r^2} \frac{\p^2 \psi}{\p \theta^2}=g(\psi) \text{ in } \Omega_{0,\infty,\theta_0}. 
    \end{equation}
    Let $s=\ln r$, $D:=\mathbb{R} \times (0,\theta_0)$ and  
    \begin{equation}\label{thm6:eq3}
        \Psi(s,\theta):=\psi(e^s,\theta)-cs, \quad (s,\theta) \in \overline{D}. 
    \end{equation}
    Then $\Psi \in C^3(\overline{D})$  and $|\p_\theta \Psi|=|\p_\theta \psi|>0$. 
    By (\ref{thm6:eq1})-(\ref{thm6:eq3}), we have 
    \begin{equation}\label{thm6:eq4}
        \frac{\p^2 \Psi}{\p s^2}+\frac{\p^2 \Psi}{\p \theta^2}=e^{2s} g(\Psi+cs), \quad (s,\theta)\in D,
    \end{equation}
    and boundary conditions 
    \begin{equation}\label{thm6:eq5}
        \Psi(s,0)=A_1, \quad \Psi(s,\theta_0)=A_2 \neq A_1. 
    \end{equation}
    
    Define $J:=\{\psi(r,\theta):(r,\theta)\in \Omega_{0,\infty,\theta_0} \}$. 
    Note $\partial_{\theta}\psi=-ru_r$. If $c=0$, it is clear that $J=(A_1, A_2)$ when $u_r<0$, and $J=(A_2, A_1)$ when $u_r>0$.  If $c\ne 0$,  we have  $J=\mathbb{R}=\psi(\p \Omega_{0,\infty,\theta_0}^T)=\psi(\p \Omega_{0,\infty,\theta_0}^B)$.

    \noindent Case 1. If $c=0$, and $u_r<0$ (resp. $u_r>0$) in $\overline{\Omega_{0,\infty,\theta_0}}\setminus\{\bm{0}\}$, then by similar arguments as in the proof of Theorem \ref{thm:4}, we have 
    \begin{equation*}
        \frac{\p^2 \Psi}{\p s^2}+\frac{\p^2 \Psi}{\p \theta^2}(\ln r_0,\theta)\leqslant 0, \quad (\text{resp. } \frac{\p^2 \Psi}{\p s^2}+\frac{\p^2 \Psi}{\p \theta^2}(\ln r_0,\theta) \geqslant 0), \quad \theta \in [0,\theta_0],
    \end{equation*}
    and 
    \begin{equation*}
        g(h(\theta))=\frac{\p^2 \Psi}{\p s^2}(\ln r_0,\theta)+\frac{\p^2 \Psi}{\p \theta^2} (\ln r_0,\theta), \quad \theta \in (0,\theta_0). 
    \end{equation*}
    Similar to the proof of Theorem \ref{thm:4}, it is easy to verify that $g$ is Lipschitz continuous in $J$, and that $g \leqslant 0$ (resp. $g \geqslant 0$). 
    Then, according to Lemma \ref{lem:4}, $\Psi$ is independent of $s$, that is, there exists a scalar function $h(\theta) \in C^3([0,\theta_0])$ such that $\Psi(s,\theta)=h(\theta)$. 
    Clearly, $h$ is strictly monotone with respect to $\theta$, since $|\p_\theta \Psi|>0$. From (\ref{thm6:eq3}), we know that $\psi(r,\theta)=h(\theta)$. Therefore, the results of Theorem \ref{thm:5} hold if (B1) or (B2) in Theorem \ref{thm:4} is satisfied.  
    
    \medskip

    \noindent Case 2. $c\ne 0$.  If $\p_\theta(ru_r)|_{\theta=0}=A$, then it follows from (\ref{thm0:eq0}) and (\ref{thm6:eq3}) that $\p^2_{\theta} \Psi(s,0)=-A$ for all $s \in \mathbb{R}$. Similarly, if $\p_\theta(ru_r)|_{\theta=\theta_0}=A$,  then $\p^2_{\theta} \Psi(s,\theta_0)=-A$ holds for all $s \in \mathbb{R}$. Following the same argument as in the proofs of Theorem \ref{thm:4}, we take the limit in (\ref{thm6:eq4}) as $\theta \to 0^+$ or $\theta \to \theta_0^-$, respectively. Together with (\ref{thm6:eq5}) and $J=\mathbb{R}=\psi(\p \Omega_{0,\infty,\theta_0}^T)=\psi(\p \Omega_{0,\infty,\theta_0}^B)$, it follows that there exists a constant $C$, such that $g(z)=C e^{-\frac{2}{c} z},\ z \in \mathbb{R}$. 
    Then, according to Lemma \ref{lem:4}, $\Psi$ is independent of $s$, that is, there exists a scalar function $h(\theta) \in C^3([0,\theta_0])$ such that $\Psi(s,\theta)=h(\theta)$. 
    Clearly, $h$ is strictly monotone with respect to $\theta$, since $|\p_\theta \Psi|>0$. From (\ref{thm6:eq3}), we know that $\psi(r,\theta)=c\ln r+h(\theta)$. 
    Thus, we have completed the proof.

    \bigskip
    
    \noindent \textbf{(ii)} $\alpha \neq 1$. 
    By (\ref{eq:BC:hom:-alpha:theta}) and (\ref{thm0:eq0}), we have  $\partial_r\psi(r, 0)=u_\theta|_{\theta=0}=c_1/r^\alpha$ and $\partial_r\psi(r, \theta_0)=u_\theta|_{\theta=\theta_0}=c_2/r^\alpha$. So there are some constants $A_1, A_2$, such that
    \begin{equation}\label{thm6:eqrmk}
        \psi(r,0)=\frac{c_1}{1-\alpha}r^{1-\alpha}+A_1, \quad \psi(r,\theta_0)=\frac{c_2}{1-\alpha}r^{1-\alpha} +A_2. 
    \end{equation}
    If $\alpha>1$, send $r\to \infty$ in the above, then by (\ref{thm6:eq1500}), we have $A_1=A_2$. If $\alpha<1$, send $r\to 0^+$ in the above, then by (\ref{thm6:eq1500}), we also have $A_1=A_2$.
    Without loss of generality, assume $A_1=A_2=0$, and
    \begin{equation}\label{thm6:eq7}
        \begin{split}
            \psi(r,0)=\frac{c_1}{1-\alpha}r^{1-\alpha}, \quad 
            \psi(r,\theta_0)=\frac{c_2}{1-\alpha}r^{1-\alpha} .
        \end{split}
    \end{equation}
    Similar as before, we have $|\p_\theta \psi|=|ru_r|>0$ in $\overline{\Omega_{0,\infty,\theta_0}} \setminus \{\bm{0} \}$.  So $c_1\ne c_2$.
    Define $J:=\{\psi(r,\theta): (r,\theta)\in \Omega_{0,\infty,\theta_0} \}$. By (\ref{thm6:eq7}), condition (i) in Lemma \ref{lem:2} holds. By Lemma \ref{lem:3}, there exists a function $g \in C^1(J)$ such that 
    \begin{equation}\label{thm6:eq8}
        \Delta \psi=\frac{\p^2 \psi}{\p r^2}+\frac{1}{r} \frac{\p \psi}{\p r}+\frac{1}{r^2} \frac{\p^2 \psi}{\p \theta^2}=g(\psi),  \text{ in } \Omega_{0,\infty,\theta_0}. 
    \end{equation}
    Let $s=\ln r$,  $D:=\mathbb{R} \times (0,\theta_0)$ and 
    \begin{equation}\label{thm6:eq9}
        \Psi(s,\theta):=\psi(e^s,\theta)e^{s(\alpha-1)}, \quad (s,\theta) \in \overline{D}. 
    \end{equation}
    Then $\Psi \in C^3(\overline{D})$ and $|\p_\theta \Psi|=|\partial_{\theta}\psi|e^{s(\alpha-1)}>0$. 
    By  (\ref{thm6:eq7})-(\ref{thm6:eq9}), we have
    \begin{equation}\label{thm6:eq10}
        \frac{\p^2 \Psi}{\p s^2}+ \frac{\p^2 \Psi}{\p \theta^2}+ 2(1-\alpha) \frac{\p \Psi}{\p s}+(1-\alpha)^2 \Psi =e^{s(1+\alpha)} g(e^{s(1-\alpha)} \Psi),\quad (s,\theta)\in D,
    \end{equation}
    with
    \begin{equation}\label{thm6:eq11}
        \Psi(s,0)=\frac{c_1}{1-\alpha},\quad \Psi(s,\theta_0)=\frac{c_2}{1-\alpha}. 
    \end{equation}
    If $\partial_\theta (r^{\alpha} u_r)|_{\theta=0} = c_3$, then by (\ref{thm0:eq0}) and (\ref{thm6:eq9}), we have
    \begin{equation}\label{thm6:eq12}
        \frac{\partial^2 \Psi}{\partial \theta^2}(s,0) = -c_3, \quad s\in \mathbb{R}.
    \end{equation}
    Similarly, if $\partial_\theta (r^{\alpha} u_r)|_{\theta=\theta_0} = c_4$, then
    \begin{equation}\label{thm6:eq13}
        \frac{\partial^2 \Psi}{\partial \theta^2}(s,\theta_0) = -c_4, \quad s\in \mathbb{R}.
    \end{equation}
    
    \noindent \textbf{Claim}: $\psi=h(\theta)r^{1-\alpha}$ in $\overline{\Omega_{0,\infty,\theta_0}}\setminus\{\bm{0}\}$ for some strictly monotone function $h(\theta)$.
        
    \noindent\emph{Proof of Claim}: We prove the claim under each of conditions (A1)-(A4) in Theorem \ref{thm:2} one by one. 
 
    \medskip

    \noindent (A1) In this case, $c_1 c_2>0$, $J=\psi(\p\Omega_{0,\infty,\theta_0}^T)=\psi(\p\Omega_{0,\infty,\theta_0}^B)$ is either $(0,+\infty)$ or $(-\infty,0)$.
    If $\p_\theta (r^{\alpha} u_r)|_{\theta=0}=c_3$, then (\ref{thm6:eq12}) holds. Taking limit $\theta \to 0^+$ in (\ref{thm6:eq10}), by (\ref{thm6:eq11}) and (\ref{thm6:eq12}), we conclude that there exists a constant $C$ such that 
    \begin{equation}\label{thm6:eq14}
        g(z)=C |z|^{\frac{\alpha+1}{\alpha-1}},\ z \in J. 
    \end{equation}
    If $\p_\theta (r^{\alpha} u_r)|_{\theta=\theta_0}=c_4$, send $\theta \to \theta_0^-$ in (\ref{thm6:eq10}). Then  (\ref{thm6:eq13}) implies that (\ref{thm6:eq14}) holds for some constant $C$.
    Substituted (\ref{thm6:eq14}) into (\ref{thm6:eq10}), we have
    \begin{equation*}
        \frac{\p^2 \Psi}{\p s^2}+ \frac{\p^2 \Psi}{\p \theta^2}+ 2(1-\alpha) \frac{\p \Psi}{\p s}+(1-\alpha)^2 \Psi =C |\Psi|^{\frac{\alpha+1}{\alpha-1}}.
    \end{equation*}
    Applying Lemma \ref{lem:4} with $L=\p_{s}^2+\p_{\theta}^2+2(1-\alpha)\p_s+(1-\alpha)^2$, $F=1$ and $g(\Psi)=C|\Psi|^{\frac{\alpha+1}{\alpha-1}}$ and $\Omega=D$ there.  Using the boundary conditions $(\ref{thm6:eq11})$ along with the fact $|\p_\theta \Psi|>0$, we have that $\Psi$ is independent of $s$. So there exists a strictly monotone function $h$ such that $\Psi(s,\theta)=h(\theta)$. 
    Therefore, by (\ref{thm6:eq9}), we know that $\psi(r,\theta)=h(\theta)r^{1-\alpha}$. 
    
    \medskip
    
    \noindent (A2) 
    In this case, we have $c_2 \neq c_1$, $c_1=0$,  $\alpha>1$ and (\ref{thm6:eq13}) holds. Therefore, $J=\psi(\p\Omega_{0,\infty,\theta_0}^T)$. By a similar argument as in (A1), it follows that $(\ref{thm6:eq14})$ holds for some constant $C$. Note that $g$ is Lipschitz in $J$, since $\alpha>1$. Lemma \ref{lem:4} implies that $\psi(r,\theta)=h(\theta)r^{1-\alpha}$. 
    
    \medskip
    
    \noindent (A3) 
    In this case, $c_1 \neq c_2$,  $c_2=0$, $\alpha>1$, $J=\psi(\p\Omega_{0,\infty,\theta_0}^B)$ and (\ref{thm6:eq12}) hold. By a similar argument as in (A2), it follows that $\psi(r,\theta)=h(\theta)r^{1-\alpha}$. 

    \medskip

    \noindent (A4) 
    In this case, $c_1 c_2<0$, $\alpha>1$, (\ref{thm6:eq12}) and (\ref{thm6:eq13}) both hold. Then $J=\mathbb{R}=\psi(\p\Omega_{0,\infty,\theta_0}^T) \cup \psi(\p\Omega_{0,\infty,\theta_0}^B) \cup \{0 \}$.  
    Taking the limits $\theta \to 0^+$ and $\theta \to \theta_0^-$ in (\ref{thm6:eq10}), together with (\ref{thm6:eq11})-(\ref{thm6:eq13}), it follows that there exist constants $C_1, C_2$ such that 
    \begin{equation*}
        g(z)=
        \begin{cases}
        C_1|z|^{\frac{\alpha+1}{\alpha-1}}, &z>0, \\
        C_2|z|^{\frac{\alpha+1}{\alpha-1}}, &z<0.
        \end{cases}
    \end{equation*}
    Since $\alpha>1$, it implies that $\displaystyle \lim_{z \to 0+} g(z)=\displaystyle \lim_{z \to 0-} g(z)=0$. Therefore, by the continuity of $g$, we can conclude that $g(0)=0$. From Lemma \ref{lem:4}, we know that $\psi(r,\theta)=h(\theta)r^{1-\alpha}$. The claim is proved.

    In all four cases above, we have obtained that $\psi(r,\theta)=h(\theta)r^{1-\alpha}$. The same argument as in the proof of Theorem \ref{thm:2} yields the assertion of Theorem \ref{thm:5}. 
\qed

\bigskip

Here we give a brief explanation of Remark \ref{rem:thm6}. 

From the proof of Theorem \ref{thm:5}, we see that condition (\ref{thm6:eq1500}) is used to derive (\ref{thm6:eq7}) from (\ref{thm6:eqrmk}). 
Under the alternative assumption (\ref{rem6:eq1}), we can still derive (\ref{thm6:eq7}) from (\ref{thm6:eqrmk}). 
We present the proof of this only for the case where $u_r<0$. 
Note $\p_\theta \psi=-ru_r>0$ in this case. Let $A_1, A_2$ be the constants in (\ref{thm6:eqrmk}). Taking the limit as $r \to \infty$ along $\theta=0$ and $\theta=\theta_0$ when $\alpha>1$, and as $r\to 0$ when $\alpha<1$, we conclude that $A_2 \geqslant A_1$. On the other hand, if (\ref{rem6:eq1}) holds, then
\begin{equation*}
    A_2-A_1+\frac{c_2-c_1}{1-\alpha}r_0^{1-\alpha}=\psi(r_0,\theta_0)-\psi(r_0,0)=-r_0\int_0^{\theta_0} u_r(r_0,\varphi) {\rm{d}}\varphi \leqslant \frac{c_2-c_1}{1-\alpha}r_0^{1-\alpha}. 
\end{equation*}
This implies $A_2 \leqslant A_1$. Therefore, $A_1=A_2$, and consequently, (\ref{thm6:eq7}) holds. 

\begin{proof}[Proof of Corollary \ref{cor:1}]
    From the result of Theorem \ref{thm:1}(i), Theorem \ref{thm:4}, and Theorem \ref{thm:5}(i), we know that $\vu$ is $(-1)$-homogeneous, $P$ is $(-2)$-homogeneous after subtracting a constant, and $(\vu,P)$ takes the following form 
    \begin{equation*}
        \vu=\frac{c}{r} \bm{e_\theta}+\frac{f(\theta)}{r} \bm{e_r}, \quad P=\frac{p}{2r},
    \end{equation*}
    where $p$ is a constant, and $f\in C^2([0,\theta_0])$ is strictly sign-definite and satisfies 
    \begin{equation}\label{eq:cor1}
        cf'=f^2+c^2+2p.
    \end{equation}
    When $\theta_0=2\pi$, it naturally follows that $f(0)=f(2\pi)$. 

    We claim that $f$ must be a nonzero constant (denoted by $d$), and therefore $p=-\frac{c^2+d^2}{2}$ when $\theta_0=2\pi$. 
    
    Indeed,  if $c=0$, this claim follows straightforwardly from (\ref{eq:cor1}) and the assumption $|u_r|>0$.   
    
     If $c \neq 0$, we first integrate both sides of equation (\ref{eq:cor1}) with respect to $\theta$ over the interval $[0,2\pi]$, and then use the periodicity condition $f(0) = f(2\pi)$ along with the strict sign-definiteness of $f$ to obtain $c^2+2p<0$. 
    
    Let $w(\theta):=\exp (-\int_0^\theta \frac{f(s)}{c} {\rm{d}}s)$, then $w\in C^3(0,2\pi)$ is non-negative and $f=-cw'/w$. Direct calculation shows that $w$ satisfies the following equation
    \begin{equation*}
        w''=\lambda w, 
    \end{equation*}
    where $\lambda=-\frac{c^2+2p}{c^2}>0$. This implies that $w(\theta)=C_1 e^{\sqrt{\lambda}\theta}+C_2 e^{-\sqrt{\lambda}\theta}$ for some constants $C_1$, $C_2$. Therefore, 
    \begin{equation*}
        f(\theta)=-c\frac{w'}{w}=-c\frac{C_1 \sqrt{\lambda} e^{\sqrt{\lambda}\theta}-C_2 \sqrt{\lambda} e^{-\sqrt{\lambda}\theta}}{C_1 e^{\sqrt{\lambda}\theta}+C_2 e^{-\sqrt{\lambda}\theta}}.
    \end{equation*}
    From $f(0)=f(2\pi)$ and $c\neq 0$, we know that $C_1C_2=0$. Therefore, $f$ is a nonzero constant. Hence, Corollary \ref{cor:1} holds. 
\end{proof}

\section*{Appendix: Explicit expressions of homogeneous solutions}
We provide explicit expressions for the $(-\alpha)$-homogeneous solutions of (\ref{eq:euler}) in the region (\ref{eq:domain}). 

In polar coordinates $(r,\theta)$, the $(-\alpha)$-homogeneous solutions $(\vu,P)$ can be written as 
\begin{equation*}
    \vu=\frac{v(\theta)}{r^\alpha} \bm{e_\theta}+\frac{f(\theta)}{r^\alpha}\bm{e_r}, \quad P=\frac{p(\theta)}{r^{2\alpha}}, 
\end{equation*}
where $\bm{e_\theta}, \bm{e_r}$ are two unit vectors 
\begin{equation*}
    \bm{e_\theta}=(-\sin \theta,\cos \theta)^T, \qquad  \bm{e_r}=(\cos \theta,\sin \theta)^T.
\end{equation*}

A direct calculation shows that $(-\alpha)$-homogeneous solutions satisfy the following equations
\begin{equation}\label{App:eq1}
    \begin{cases}
    \begin{aligned}
    vf'-\alpha f^2-v^2-2\alpha p &=0, \\
    (1-\alpha)fv+vv'+p'&=0, \\
    (1-\alpha)f+v' &=0.
    \end{aligned}
    \end{cases}
\end{equation}
Here, $'$ represents the derivative with respect to the variable $\theta$. By the second line and the third line of (\ref{App:eq1}), it follows that $p$ must be constant for any $\alpha \in \mathbb{R}$, and (\ref{App:eq1}) reduces to 
\begin{equation}\label{App:eq2}
    \begin{cases}
    \begin{aligned}
    vf'-\alpha f^2-v^2-2\alpha p &=0, \\
    (1-\alpha)f+v' &=0.
    \end{aligned}
    \end{cases}
\end{equation}

\subsection*{A. $(-1)$-homogeneous solutions}
We first consider the case $\alpha=1$. The second line of (\ref{App:eq2}) implies that $v$ must be constant, and we obtain a single equation for $f$,
\begin{equation}\label{App:eq3}
    vf'=f^2+v^2+2p.
\end{equation}

When $v=0$, equation (\ref{App:eq3}) means that $f^2=-2p$, thus $p\leqslant 0$ and 
\begin{equation*}
    \vu=\pm \frac{\sqrt{-2p}}{r}\bm{e_r}, \quad P=\frac{p}{r^2}.
\end{equation*}

When $v \neq 0$ and $v^2+2p>0$, (\ref{App:eq3}) has a class of solutions 
\begin{equation}\label{sol:1}
        f(\theta) =\sqrt{v^2+2p} \tan \bigg(\frac{\sqrt{v^2+2p}}{v} \theta+C \bigg),
\end{equation}
where $C$ is an arbitrary constant that makes $f(\theta)$ defined in $[0,\theta_0]$. Hence, we have 
\begin{equation*}
    \vu=\frac{v}{r} \bm{e_\theta}+\frac{
    f(\theta)}{r}\bm{e_r}, \quad P=\frac{p}{r^2},
\end{equation*}
where $f(\theta)$ is given by (\ref{sol:1}).

When $v \neq 0$ and $v^2+2p=0$, all solutions of (\ref{App:eq3}) are given by 
\begin{equation}\label{sol:2}
    f(\theta)\equiv 0, \quad  {\rm{or}} \quad  f(\theta)=-\frac{v}{\theta+C},
\end{equation}
where $C$ is an arbitrary constant such that $f(\theta)$ is defined in $[0,\theta_0]$. Then we have 
\begin{equation*}
    \vu=\frac{v}{r} \bm{e_\theta}+\frac{
    f(\theta)}{r}\bm{e_r}, \quad P=\frac{-v^2}{2 r^2}, 
\end{equation*}
where $f(\theta)$ is given by (\ref{sol:2}). 

When $v \neq 0$ and $v^2+2p<0$, (\ref{App:eq3}) has a class of solutions 
\begin{equation}\label{sol:3}
    \begin{split}
        f(\theta)&\equiv \pm \sqrt{-(v^2+2p)}, \quad {\rm{or}} \\ 
        f(\theta)&=\frac{2 \sqrt{-(v^2+2p)}}{1+C \exp{\Big( \frac{2 \sqrt{-(v^2+2p)}}{v} \theta \Big) } } -\sqrt{-(v^2+2p)}.
    \end{split}
\end{equation}
Here, $C$ is an arbitrary non-zero constant that ensures $f(\theta)$ is defined in $[0,\theta_0]$. Therefore, 
\begin{equation*}
    \vu=\frac{v}{r} \bm{e_\theta}+\frac{
    f(\theta)}{r}\bm{e_r}, \quad P=\frac{p}{r^2}, 
\end{equation*}
where $f(\theta)$ satisfies (\ref{sol:3}). 

\subsection*{B. $(-\alpha)$-homogeneous solutions with $\alpha \neq 1$}
We next consider the case that $\alpha \neq 1$. Explicit formulas for $v(\theta)$ and $f(\theta)$ in this case are not always available.  
Now, we give some explicit expressions for $v, f$ when $p \leqslant 0$. 

From the second line in (\ref{App:eq2}), we know that 
\begin{equation}\label{App:eq4}
    f(\theta)=\frac{v'(\theta)}{\alpha-1}.
\end{equation}
Substituting (\ref{App:eq4}) into the first line of (\ref{App:eq2}), we find that
\begin{equation}\label{App:eq5}
     \frac{-\alpha}{(1-\alpha)^2} (v')^2-\frac{1}{1-\alpha}v v''-v^2=2\alpha p.
\end{equation}

When $p=0$, it is easy to check that (\ref{App:eq5}) has  solutions 
\begin{equation}\label{App:eq10}
    v(\theta)=C_1 \cos^{1-\alpha}(\theta+C_2).
\end{equation}
Then
\begin{equation}\label{App:eq11}
    f(\theta)=C_1 \cos^{-\alpha}(\theta+C_2) \sin(\theta+C_2),
\end{equation}
where $C_1, C_2$ are arbitrary constants such that $v(\theta)$ and $f(\theta)$ are defined in $[0,\theta_0]$. Thus, 
\begin{equation*}
     \vu=\frac{v(\theta)}{r^\alpha} \bm{e_\theta}+\frac{f(\theta)}{r^\alpha}\bm{e_r}, \quad P=0,
\end{equation*}
where $v(\theta)$ is given by (\ref{App:eq10}), and $f(\theta)$ is given by (\ref{App:eq11}).

When $p<0$, it is easy to verify that (\ref{App:eq5}) has  a class of non-trivial solutions 
\begin{equation}\label{App:eq20}
    v(\theta)=\sqrt{-2p} \sin[(1-\alpha)\theta+C].
\end{equation}
Then 
\begin{equation}\label{App:eq21}
    f(\theta)=-\sqrt{-2p} \cos[(1-\alpha)\theta+C],
\end{equation}
where $C$ is an arbitrary constant. Thus, 
\begin{equation*}
     \vu=\frac{v(\theta)}{r^\alpha} \bm{e_\theta}+\frac{f(\theta)}{r^\alpha}\bm{e_r}, \quad P=\frac{p}{r^{2\alpha}},
\end{equation*}
where $v(\theta)$ is given by (\ref{App:eq20}), and $f(\theta)$ is given by (\ref{App:eq21}).

\bibliographystyle{plain}
\bibliography{Reference.bib}

\end{document}